\documentclass[a4paper,11pt,reqno]{amsart}

\usepackage[USenglish]{babel}
\usepackage[T1]{fontenc}
\usepackage[utf8]{inputenc}
\usepackage{amsfonts}

\usepackage{amsmath}

\usepackage{amssymb}

\usepackage{newtxtext}

\usepackage{amsthm}


\usepackage{xcolor}

\usepackage{fix-cm}

\usepackage[colorlinks]{hyperref}
\hypersetup{
  linkcolor=[RGB]{192,0,0},
  citecolor=[RGB]{0, 150, 0},
}

\usepackage{braket}
\usepackage{mathtools}
\usepackage{thmtools}
\usepackage{graphicx}
\usepackage{tabularx}
\usepackage{tensor}

\usepackage{tikz}
\usepackage{tikz-cd}
\usepackage{tikz-3dplot}
\usetikzlibrary{cd}
\usetikzlibrary{arrows}
\usetikzlibrary{matrix}
\usepackage{nicematrix} 
\usetikzlibrary{positioning}

\tikzcdset{arrow style=math font}

\usepackage[subscriptcorrection]{newtxmath}

\DeclareMathAlphabet{\mathbb}{U}{msb}{m}{n}
\DeclareMathAlphabet{\mathfrak}{U}{euf}{m}{n}

\DeclareMathAlphabet{\mathbb}{U}{msb}{m}{n}


\usepackage{xcolor}
\definecolor{red}{rgb}{1,0,0}
\definecolor{darkred}{RGB}{192,0,0}


\newcommand{\Sum}{\displaystyle\sum}

\newcommand{\bigzero}{\mbox{\huge 0}}



\newcommand{\bfa}{\mathbf{a}}

\newcommand{\bfk}{\mathbf{k}}

\newcommand{\bfm}{\mathbf{m}}

\newcommand{\bfx}{\mathbf{x}}
\newcommand{\bfy}{\mathbf{y}}

\newcommand{\calA}{\mathcal{A}}
\newcommand{\calB}{\mathcal{B}}

\newcommand{\calD}{\mathcal{D}}

\newcommand{\calF}{\mathcal{F}}

\newcommand{\calP}{\mathcal{P}}

\newcommand{\calR}{\mathcal{R}}

\newcommand{\calZ}{\mathcal{Z}}




\newcommand{\bbA}{\mathbb{A}}

\newcommand{\bbC}{\mathbb{C}}

\newcommand{\bbK}{\mathbb{K}}

\newcommand{\bbN}{\mathbb{N}}

\newcommand{\bbP}{\mathbb{P}}

\newcommand{\bbZ}{\mathbb{Z}}






\newcommand{\bfalpha}{\boldsymbol{\alpha}}



\newcommand{\dashto}{\dashrightarrow}

\renewcommand{\tilde}[1]{\widetilde{#1}}
\renewcommand{\hat}[1]{\widehat{#1}}

\newcommand{\id}{\mathrm{id}}


\DeclareMathOperator{\Ann}{Ann}
\DeclareMathOperator{\Ap}{Ap}

\DeclareMathOperator{\brk}{brk}
\DeclareMathOperator{\Cat}{Cat}

\DeclareMathOperator{\HF}{HF}

\DeclareMathOperator{\im}{im}

\DeclareMathOperator{\ldf}{ldf}

\DeclareMathOperator{\Proj}{Proj}

\DeclareMathOperator{\rk}{rk}

\DeclareMathOperator{\smrk}{smrk}
\DeclareMathOperator{\crk}{crk}

\DeclareMathOperator{\Spec}{Spec}

\DeclareMathOperator{\tdf}{tdf}

\newcommand{\Hilb}{\mathrm{Hilb}}



\usepackage[textwidth=16cm, textheight=24cm]{geometry}

\usepackage{enumitem}

\usepackage[alphabetic]{amsrefs}

\usepackage[capitalise,nameinlink]{cleveref}

\usepackage{adjustbox}
\usepackage[textwidth=0.8in]{todonotes}
\usepackage{comment}
\setlength{\marginparwidth}{0.8in}

\definecolor{darkred}{RGB}{192,0,0}
\definecolor{darkelectricblue}{rgb}{0.33, 0.41, 0.47}

\newcommand{\oldtext}[1]{\textcolor{darkelectricblue}{#1}}
\newcommand{\cfchange}[1]{\textcolor{blue}{#1}}

\newcommand{\oldinvtext}[1]{}

\allowdisplaybreaks

\numberwithin{equation}{section}
\theoremstyle{definition}
\newtheorem{defn}[equation]{Definition}
\theoremstyle{plain}
\newtheorem{teo}[defn]{Theorem}
\newtheorem{prop}[defn]{Proposition}

\newtheorem{lem}[defn]{Lemma}

\newtheorem{cor}[defn]{Corollary}
\theoremstyle{remark}
\newtheorem{rem}[defn]{Remark}
\newtheorem{exam}[defn]{Example}
\newtheorem{question}[defn]{Question}
\allowdisplaybreaks
\def\CC{\mathbb{C}}

\title{Symmetric powers: structure, smoothability, and applications}
\author{Cosimo Flavi, Joachim Jelisiejew, Mateusz Micha{\l}ek}
\address{{\normalfont (Cosimo Flavi)}
\normalfont \scshape\fontfamily{ptm}\selectfont
	Wydział Matematyki, Informatyki i Mechaniki,
Uniwersytet Warszawski,
	\normalfont{ul.~Stefana Banacha 2, 02-097 Warsaw, Poland.}}
\email{c.flavi@uw.edu.pl}

\address{{\normalfont (Joachim Jelisiejew)} \normalfont \scshape\fontfamily{ptm}\selectfont
	Wydział Matematyki, Informatyki i Mechaniki,
Uniwersytet Warszawski,
	\normalfont{ul.~Stefana Banacha 2, 02-097 Warsaw, Poland.}}
\email{j.jelisiejew@uw.edu.pl}

\address{{\normalfont (Mateusz Micha{\l}ek)} \normalfont \scshape\fontfamily{ptm}\selectfont
	Fachbereich Mathematik und Statistik,
Universit\"at Konstanz,
	\normalfont{Fach D 197 D-78457, Konstanz, Germany.}}
\email{mateusz.michalek@uni-konstanz.de}

\keywords{Additive decompositions, apolar algebra, border rank, secant varieties, smoothable rank, symmetric tensor, tensor rank}

\subjclass{Primary 15A69; Secondary 14N07.}

\DeclareMathOperator{\grad}{grad}
\DeclareMathOperator{\totalgrad}{totalGrad}
\newcommand{\goodquotient}{\mathbin{
  \mathchoice{\left/\mkern-6mu\right/}
    {/\mkern-5mu/}
    {/\mkern-5mu/}
    {/\mkern-5mu/}}}

\newcommand{\calDgl}{\calD^{\rm{gl}}}
\newcommand{\calRgl}{\calR^{\rm{gl}}}

\crefname{lem}{lemma}{lemmas}
\Crefname{examp}{Example}{Examples}

\begin{document}
\renewcommand{\abstractname}{\normalfont \scshape\fontfamily{ptm}\selectfont{Abstract}}
\begin{abstract}
    We investigate border ranks of twisted powers of polynomials and
    smoothability of symmetric powers of algebras. We prove that the latter
are smoothable. For the former, we obtain upper bounds for the border rank in
general and prove that they are optimal under mild conditions. We give
applications to complexity theory. {Many of the results rest on the notion of an \emph{encompassing polynomial}, which we introduce.}
\end{abstract}
\maketitle

\thispagestyle{empty}

\section{\normalfont \scshape\fontfamily{ptm}\selectfont Introduction}
    \oldinvtext{We consider two classical problems: finding
    smoothable finite algebras and finding
    homogeneous polynomials of small border rank.
    \oldtext{Both problems are understood to be next-to-impossible in general and} \cfchange{Both problems are generally considered to be almost impossible, and}
    positive results are \oldtext{very} scarce except for specific cases, such as
    subschemes of $\mathbb{A}^2$~\cite{fogarty}, Gorenstein subschemes of
    $\mathbb{A}^3$~\cite{kleppe_roig_codimensionthreeGorenstein}, forms in
    small number of variables~\cite{Landsberg__tensors}, subschemes defined by monomial ideals or complete intersections.
    Outside this realm, the main general tool to obtain either object is using tensor powers:}

{We study two classical and challenging problems in algebraic geometry and commutative algebra: the classification of smoothable finite algebras, and the classification of homogeneous polynomials of small border rank. Both problems are widely regarded as very difficult, primarily due to the vast diversity of possible examples and lack of general structure theorems.}

{Nevertheless, significant progress has been made in particular cases. Notable examples include subschemes of $\mathbb{A}^2$~\cite{fogarty}, Gorenstein subschemes of $\mathbb{A}^3$~\cite{kleppe_roig_codimensionthreeGorenstein}, and homogeneous forms in a small number of variables~\cite{Landsberg__tensors}. Additional constructions involve subschemes defined by monomial ideals or complete intersections, as well as explicit constructions derived via computational methods such as limits of families of points and linear combinations of powers of forms.}

{Outside these specific instances, one of the few general techniques applicable to both smoothability and border rank involves the use of tensor powers. This method yields the following two key properties:}
    \begin{enumerate}[label=(\arabic*), left= 3pt, widest=2,nosep]
        \item if $A$ is a smoothable algebra, then for every $d$ the tensor
            product $A^{\otimes d}$ is smoothable;
        \item\label{it:tensor} if $F = F(x_0, \ldots ,x_n)$ is a homogeneous polynomial of border rank at most
            $r$, then for every $d$ the polynomial
            \[
                F^{\otimes d} := F(x_{1,0}, \ldots ,x_{1,n})
                F(x_{2,0}, \ldots ,x_{2,n})\cdots F( x_{d,0},
                \ldots ,x_{d,n})
            \]
            in $(n+1)d$ variables has border rank with respect to the
            Segre-Veronese variety (=variety of monomials of the same
                multi-degree as $F^{\otimes d}$) at most
            $r^{d}$.
    \end{enumerate}

    In this article we provide twisted \emph{symmetric power} analogues of the two results, see
    \autoref{ref:intro:smoothability} and
    \autoref{ref:intro:universalityTheorem} below. Both results are special
    cases of more general structure {theorems, which we} explain below.
    The passage from tensor to symmetric powers may seem to be automatic, but
    it is not, in fact it is unexpectedly subtle,
    {see Counterexamples~\ref{exm:VerNotSmooth},~\ref{ex:counterexampleBR},~\ref{ex:bigSquareForEncompassing}}
    below and the discussion about the necessity of twisting. In fact, while
    the tensor powers are a classical tool, it seems that almost nothing is
    known about symmetric powers.

    Smoothability is of {great} interest for moduli spaces, with applications
    to enumerative geometry and combinatorics. Border rank is central {to the} classical theory of secant varieties (see~\cites{BCC+18, BGI11,CGO14}), with
    applications to statistics, signal processing (see~\cites{Che11, DC07,
    McC87}) and, {expecially}, to complexity theory and the
    geometry of tensors (\cites{BCS97, landsberg2017geometry, CW}). From this point of view,
    it is important to look at the \emph{asymptotical} behavior: to consider
    sequences of polynomials or algebras with {increasing} degree and number of
    variables. In this aspect, the gain from passing from tensor to symmetric
    powers is very notable. Also in the symmetric power setting the
        rank we consider is always the usual Waring rank, in contrast {to} the
        right hand side of~\ref{it:tensor}
above, where it changes to the Segre-Veronese rank.

\subsection{Smoothability}\label{ssec:smoothability}

    The situation is much cleaner for smoothability, so we begin with {this part}.
    We talk about finite schemes $Z = \Spec(A)$ rather than algebras $A$.
    By
    $\Ap(-)$ we denote the apolar algebra.

\begin{teo}\label{ref:intro:smoothability}
    \begin{enumerate}[label=(\arabic*), left= 3pt, widest=2,nosep]
        \item For every {finite} smoothable scheme $Z$, its $d$-th symmetric power $S^d Z$ is smoothable for every {$d\geq 1$};
        \item if $f\in \bbK[x_1, \ldots ,x_n]$ and $d\geq 1$ are such that \[
        \dim_{\bbK}
            \Ap(f^d) = \binom{\dim_{\bbK} \Ap(f)+d-1}{d},
            \]
            then the scheme \[Z_d = \Spec \bigl(\Ap(f^d)\bigr)\] is isomorphic to $S^dZ_1$. {In particular,} if $Z_1$ is smoothable, then $Z_d$ is smoothable.
    \end{enumerate}
\end{teo}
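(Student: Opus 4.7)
For part (1), my plan is to lift a smoothing of $Z$ to one of $S^d Z$. Given a finite flat family $\pi \colon \mathcal{Z} \to B$ over a pointed smooth irreducible base $(B,0)$ with $\mathcal{Z}_0 = Z$ and smooth generic fiber, I would form the $d$-th fibered power $\mathcal{Z}^{\times_B d}$ (still finite flat, with its natural $S_d$-action permuting the factors) and take the relative quotient $S^d_B \mathcal{Z} := \mathcal{Z}^{\times_B d}/S_d \to B$, realized as the relative spectrum of the $S_d$-invariants of $(\pi_*\calO_\mathcal{Z})^{\otimes d}$. Its closed fiber is $S^d Z$, and its generic fiber is the $d$-th symmetric power of a finite reduced scheme, hence is itself reduced of length $\binom{n+d-1}{d}$ where $n = \deg \pi$. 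Provided this new family is flat over $B$, it furnishes a smoothing of $S^d Z$.

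For part (2), I would construct a ring homomorphism
\[
\phi \colon R \longrightarrow S^d \Ap(f) := \bigl(\Ap(f)^{\otimes d}\bigr)^{S_d},
\]
where $R = \bbK[y_1,\dots,y_n]$ acts on $\bbK[x_1,\dots,x_n]$ by contraction. Take the coproduct $\Delta \colon R \to R^{\otimes d}$ of the Hopf structure on $R$ viewed as functions on the additive group $\bbA^n$, explicitly $\Delta(y_i) = y_i^{(1)} + \dots + y_i^{(d)}$. Since the group law is commutative, $\Delta$ lands in $(R^{\otimes d})^{S_d}$; composing with the $S_d$-equivariant surjection $R^{\otimes d} \twoheadrightarrow \Ap(f)^{\otimes d}$ then yields $\phi$.

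The key input is $\Ker(\phi) \subseteq \Ann(f^d)$. Setting $F := f(x^{(1)})\cdots f(x^{(d)}) \in \bbK[x^{(1)},\dots,x^{(d)}]$ and letting $\delta$ be the diagonal substitution sending every $x^{(k)}$ to $x$, the multivariate chain rule gives the equivariance identity
\[
r \cdot f^d \;=\; r \cdot \delta(F) \;=\; \delta\bigl(\Delta(r) \cdot F\bigr), \qquad r \in R,
\]
so $\Delta(r) \cdot F = 0$ forces $r \cdot f^d = 0$. This produces the dimension chain
\[
\dim \Ap(f^d) \;=\; \dim R/\Ann(f^d) \;\le\; \dim R/\Ker(\phi) \;\le\; \dim S^d \Ap(f) \;=\; \binom{\dim \Ap(f) + d - 1}{d}.
\]
The hypothesis equates the outer terms, forcing every inequality to be an equality; hence $\Ker(\phi) = \Ann(f^d)$, $\im(\phi) = S^d \Ap(f)$, and $\phi$ induces a ring isomorphism $\Ap(f^d) \cong S^d \Ap(f)$. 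Taking $\Spec$ gives $Z_d \cong S^d Z_1$, and the smoothability of $Z_d$ when $Z_1$ is smoothable then follows at once by applying part (1) to $Z_1$.

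The step I expect to be the main obstacle is the flatness assertion in part (1): one must verify that taking relative $S_d$-invariants of a finite flat family preserves flatness over $B$. In characteristic zero this is immediate from the Reynolds operator $\frac{1}{d!}\sum_{\sigma \in S_d}\sigma$, which exhibits the invariants as a direct summand of a locally free sheaf, hence as locally free. In positive or mixed characteristic the Reynolds trick fails, and a characteristic-free argument (for instance via étale-local triviality of $\pi$ over a dense open of $B$, or a reduction to a $\bbZ$-flat model) is likely required.
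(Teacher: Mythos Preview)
Your proposal is correct and follows essentially the same strategy as the paper. For part~(1), the paper works with the affine model $\bbK[\![t]\!]\to\calA$, forms $\calA^{\otimes d}$, and takes $\Sigma_d$-invariants; flatness is argued via ``submodule of a free module over a PID is free'' rather than your Reynolds operator, and the identification of the special fiber with $S^dA$ uses exactness of $S^d(-)$ in characteristic zero---the same char-zero input you flag. For part~(2), your coproduct $\Delta$ is exactly the paper's map $\injD(\alpha_i)=\sum_j\alpha_{ij}$, your diagonal $\delta$ is their $\projR$, your equivariance identity $r\cdot\delta(F)=\delta(\Delta(r)\cdot F)$ is their \autoref{lem_projection_apolarity_action}, and your dimension-squeeze is their diagram~\eqref{eq:triangle}. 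The Hopf-algebra packaging is a nice way to say it, but the content is identical.
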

{Note} that in general the smoothability of $\Spec \bigl(\Ap(f)\bigr)$ does not imply the
smoothability of $\Spec \bigl(\Ap(f^d)\bigr)$ for $d\geq 1$. In
\autoref{ex:squareNotSmoothable} we provide an $f$ such that $\Ap(f)$ is
smoothable, while $\Ap(f^2)$ is not.
The proof of \autoref{ref:intro:smoothability} is based on the following
abstract but useful proposition.
\begin{prop}\label{ref:quotients:prop}
    Let $Z$ be a finite scheme with an action of a linearly reductive algebraic group $G$ and
    suppose that $Z$ admits a $G$-equivariant smoothing $\mathcal{Z}$. Then
    $\mathcal{Z}\goodquotient G$ is a smoothing of $Z\goodquotient G$, so
    $Z\goodquotient G$ is smoothable.
\end{prop}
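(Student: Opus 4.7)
\emph{Proof plan.} Write the given $G$-equivariant smoothing as a flat morphism $\mathcal{Z}\to B$ over an integral base $B$ with a marked point $0\in B$, whose fiber over $0$ is $Z$ and whose generic fiber is smooth (hence étale, since $Z$ is $0$-dimensional); by hypothesis $G$ acts on $\mathcal{Z}$ covering the trivial action on $B$. The plan is to form the fiberwise GIT quotient $\mathcal{Y}:=\mathcal{Z}\goodquotient G\to B$ and verify three things: (i) $\mathcal{Y}\to B$ is flat; (ii) its fiber over $0$ is $Z\goodquotient G$; (iii) its generic fiber is smooth.

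For (i) and (ii), work affine-locally and write $B=\Spec R$, $\mathcal{Z}=\Spec S$, so that $S$ is a flat $R$-algebra carrying an $R$-linear $G$-action. Linear reductivity provides the isotypic decomposition $S=S^G\oplus M$, where $M$ is the sum of the nontrivial isotypic pieces; since the $R$-action commutes with $G$, this is a decomposition of $R$-modules. Hence $S^G$ is an $R$-module direct summand of the flat module $S$ and is therefore flat, giving (i). The isotypic decomposition is preserved under arbitrary base change $R\to R'$, yielding a natural identification $S^G\otimes_R R'\cong (S\otimes_R R')^G$; specialising to the closed point $0\in B$ yields (ii).

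The subtle step is (iii). Letting $K=\kappa(\eta)$ be the function field of $B$, the same base change identity identifies the generic fiber of $\mathcal{Y}$ with $\Spec A^G$, where $A:=S\otimes_R K$ is étale over $K$. Further base changing to $\bar K$, which is faithfully flat and again commutes with invariants, reduces us to showing étaleness for $A\otimes_K \bar K\cong \bar K^n$. Any $\bar K$-algebra automorphism of $\bar K^n$ permutes the $n$ primitive idempotents, so the induced morphism of algebraic groups $G\to S_n$ factors through the component group $\pi_0(G)$, which is finite as $G$ is linearly reductive. Writing $H\subseteq S_n$ for the image, $(\bar K^n)^G=(\bar K^n)^H\cong \bar K^r$, with $r$ the number of $H$-orbits of idempotents, which is étale; descending along $K\to\bar K$ gives étaleness of $A^G$ over $K$.

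I expect (iii) to be the main obstacle: (i) and (ii) are essentially formal consequences of the exactness of $(-)^G$ and its compatibility with base change, whereas (iii) additionally uses that the $G$-action on a finite étale algebra factors through a finite group action, for which the finiteness of $\pi_0(G)$ (guaranteed by linear reductivity) is essential.
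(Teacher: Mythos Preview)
Your proof is correct and follows the same overall architecture as the paper's: form $\mathcal{Z}\goodquotient G$ over the base, then check flatness, identification of the special fiber, and smoothness of the generic fiber. Steps (i) and (ii) are handled essentially the same way in both, via exactness of $(-)^G$; your direct-summand argument for flatness is in fact slightly more robust than the paper's, which works over the PID $\bbK[\![t]\!]$ and just observes that a submodule of a free module is free.

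The genuine difference is in step (iii), and here the paper's argument is markedly simpler than yours. Rather than analysing how $G$ acts on $\bar K^n$ and invoking finiteness of $\pi_0(G)$, the paper notes that for a finite-dimensional algebra over a field, smooth $\Leftrightarrow$ \'etale $\Leftrightarrow$ geometrically reduced; since the generic fiber of $\mathcal{Z}$ is geometrically reduced, so is any subalgebra of it, in particular $\calA^G\otimes K$. That is the whole argument. So your assessment is inverted: (iii) is not the subtle step but the trivial one, once you use this characterisation. Your route through $\pi_0(G)$ works, but it introduces machinery (connectedness of $G^0$, discreteness of $\Aut_{\bar K\text{-alg}}(\bar K^n)$, finiteness of the component group) that the ``subalgebra of reduced is reduced'' observation avoids entirely.
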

For the case $\Spec \bigl(\Ap(f^d)\bigr)$, we have $Z = \Spec \bigl(\Ap(f)^{\otimes d}\bigr)$ and $G =
\Sigma_d$
is the symmetric group acting on $Z$ by permuting the copies of $\Ap(f)$.
\autoref{ref:quotients:prop} applies in other natural situations, notably for
the Veronese subalgebras. We discuss {these} now.
\begin{exam}
    Fix an arbitrary $d$ and consider the monomial algebra
    \[
        {A(d)} \coloneqq \frac{\bbK[\alpha_1, \ldots ,\alpha_{d}]}{(\alpha_1^2,  \ldots
            ,\alpha_{d}^2)}
    \]
    and the corresponding scheme $Z_d \coloneqq \Spec({A(d)})$.
    Let $G \coloneqq \langle g\rangle$ be a cyclic group of order two that acts on
    \[Z_1 = \Spec\bigl(\bbK[\alpha]/(\alpha^2)\bigr)\] by $g\cdot \alpha \coloneqq -\alpha$.
    The
    scheme $Z_1$ has a $G$-equivariant smoothing $\bbK[\alpha, t]/(\alpha^2 -
    t)$. Therefore, also the scheme $Z_d = Z_1\times \cdots \times Z_1$ admits
    a $G$-equivariant smoothing, so
    \[
        Z_d\goodquotient G  \simeq \Spec\bigl( {A(d)}^{\mathbb{Z}/2} \bigr)
    \]
    is smoothable.
\end{exam}

\autoref{ref:quotients:prop} is sharp in that the
assumption of the existence of $G$-equivariant smoothing is
necessary, as we {show} in the following example.

\begin{exam}\label[examp]{exm:VerNotSmooth}
Let us consider the monomial algebra
    \[
        A \coloneqq \frac{\bbK[\alpha_1, \ldots ,\alpha_9]}{(\alpha_1^2,  \ldots
        ,\alpha_9^2)}
    \]
    which has degree $512$ and Hilbert function \[\HF_A = ( 1, 9, 36, 84,
    126, 126, 84, 36, 9, 1).\]
    The corresponding subscheme $Z \coloneqq \Spec(A) \subseteq
    \mathbb{A}^9$ is torus-fixed and in particular the cyclic group $G \coloneqq
    \mathbb{Z}/3$ acts on it by multiplying each variable by a third root of unity. In this case
    \[
        Z\goodquotient G  \simeq \Spec\bigl(A^{\mathbb{Z}/3}\bigr) = \Spec
        \left(\bbK \oplus A_3 \oplus A_6 \oplus A_9\right).
    \]
    This is a subscheme with Hilbert function $(1,84,84,1)$, which
    is not smoothable by the criterion of~\cite{Jel19}*{Theorem~1.2}.
\end{exam}

Let us define a more general class of algebras which includes the last two
examples.
\begin{defn}
Given a graded algebra $A=\bigoplus_{i\geq 0} A_i$ and a positive integer $k$
the algebra $\bigoplus_{i\geq 0} A_{ki}$ is called the \emph{$k$-th Veronese subalgebra}.
\end{defn}

The previous examples, as well as our theorems on passing to symmetric powers,
can be seen as specific cases of a more general question:
\begin{question}
Given a finite smoothable scheme $\Spec(A)$ with an action of a group $G$,
when is $\Spec(A^G)$ also smoothable?
\end{question}
As we can see in \autoref{exm:VerNotSmooth} the answer is not always positive. Still, \autoref{ref:intro:smoothability} and \autoref{ref:quotients:prop} provide many examples when it is.

\subsection{Powers of polynomials and their ranks}

We now shift {our} attention from smoothability to border rank.
    Let $F\in \bbK[x_0, \ldots ,x_n]$ be a homogeneous polynomial.
    {One can mimic~\ref{it:tensor} above as follows: replace $r^d = \dim_{\bbK} (\bbK^r)^{\otimes d}$ with $\binom{r+d-1}{d} = \dim_{\bbK} S^d(\bbK^r)$ and ask whether} if $\brk(F) {\leq} r$, then \[\brk(F^d) \leq
    \binom{r+d-1}{d}\] for all $d\geq 1$. {However, this inequality} is false even in the {simplest} cases.

    \begin{exam}\label[examp]{ex:counterexampleBR}
        Let \[F \coloneqq x_0^3 + x_1^3 \in \bbK[x_0, x_1].\] Then $\rk(F) = 2$ and
        the middle catalecticant rank of $F^2$ is $4 > \binom{3}{2}$, so
        $\brk(F^2) \geq 4$, in fact $\brk(F^2) = 4$. Also $\brk(F^{[2]}) = 4$,
        where $F^{[2]}$ is the divided square of $F$. Similar results will be
        obtained for Fermat cubics in more variables, for exponents {high}
        than $3$ and so on.
    \end{exam}

    Since the upper bound on the border rank is {false} in the {simplest} case of
    two variables, it may come as a surprise that a bound can be achieved
    in a fairly general situation, which we explain below. We need two
    notions.

    \newcommand{\twist}[1]{\mathrm{tw}(#1)}

    First, for a homogeneous
    polynomial $F = \sum_{i}
    \lambda_i \bfx^{i}\in \bbK[x_0, \ldots ,x_n]$, where $i=(i_0,\dots,i_n)$, we define the \emph{twist of $F$} by
    \[
        \twist{F} := \sum_{i} \lambda_i\frac{1}{i_0!} \bfx^i.
    \]
    For example,
    \[\twist{x_0^2x_1x_2 + x_0^3 x_2 + x_1^4} = \frac{1}{2}x_0^2x_1x_2 +
    \frac{1}{6}x_0^3x_2 + x_1^4.\]
    The reparametrization used in the twist is unfortunately necessary below, as examples such as \autoref{ex:bigSquareForEncompassing} show. The technical reason for the need of twists is explained in~\S\ref{ssec:tautological} below. It would be very interesting to find a more conceptual explanation.

Second, we say that
a polynomial $f$ is \emph{encompassing} if the parts of degree $\leq 1$ of
a
basis of the space of its partials are linearly independent, see~\autoref{defn:encompassing}.
For example, the polynomial $f\coloneq x_1^2 + x_2$ is
encompassing, because the degree $\leq 1$ part of derivatives of $f$ are
\[(f)_{\leq 1}=x_2,\qquad \frac{\partial
f}{\partial x_1} =\biggl(\frac{\partial
f}{\partial x_1}\biggr)_{\leq 1} =2x_1,\qquad\frac{\partial^2 f}{\partial
x_1^2} = \biggl(\frac{\partial^2 f}{\partial x_1^2}\biggr)_{\leq 1}=2,\] and
they are
linearly independent. The form $F \coloneq x_1^2 +x_2^2$ is, instead, not
encompassing, because already
$F_{\leq 1}$ is zero {(for this reason, homogeneous polynomials of degree $\geq 2$ are never encompassing)}.
Our second theorem is as follows.
\begin{teo}\label{ref:intro:mainThmForms}
    Let $F\in \bbK[x_0, \ldots ,x_n]$ be a concise homogeneous polynomial such that
    its dehomogenization $f\coloneq F|_{x_0=1}$ is encompassing and the apolar
    algebra of $f$ is smoothable. Then, for every $d\geq 1$, the following
    conditions hold:
\begin{enumerate}[label=(\arabic*), left= 3pt, widest=2,nosep]
        \item the form $\twist{F^d}$ has smoothable rank, border
            rank, cactus
            rank, border cactus rank, and
            the middle catalecticant rank all equal to $\binom{n+d}{d}$;
        \item the scheme
            $\Spec \bigl(\Ap(f^d)\bigr)$ is smoothable and apolar to
            $\twist{F^d}$ under a
            suitable embedding (see \S\ref{ssec:tautological}).
    \end{enumerate}
\end{teo}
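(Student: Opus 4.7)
My plan is to reduce everything to Theorem~\ref{ref:intro:smoothability}(2) via the central identity
\[
\dim_{\bbK}\Ap(f^d) \;=\; \binom{n+d}{d}.
\]
First, conciseness of $F$ combined with encompassing of $f$ yields $\dim_{\bbK}\Ap(f) = n+1$: encompassing makes the linear projection $\Ap(f)\to\bbK\oplus\bbK x_1\oplus\cdots\oplus\bbK x_n$ onto degree-$\leq 1$ parts of representatives injective, so $\dim_{\bbK}\Ap(f)\leq n+1$, while conciseness of $F$ forces surjectivity. After a change of basis in $\Ap(f)$ we obtain representatives $p_0,p_1,\ldots,p_n$ of a basis with $p_0 = 1$ and $p_i = x_i + q_i$ for $i\geq 1$, where $q_i$ has neither constant nor linear term.

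\emph{Key identity.} The Leibniz rule shows that every partial $\partial_I(f^d)$ is a symmetric combination of products of $d$ partials of $f$; conversely, in characteristic zero, polarization recovers every $d$-fold product of partials of $f$ from such diagonal values, so $\Ap(f^d)$ coincides with the subspace of $\bbK[x_1,\ldots,x_n]$ spanned by $d$-fold products of elements of $\Ap(f)$. This latter span is generated by the $\binom{n+d}{d}$ products $p_0^{\alpha_0}p_1^{\alpha_1}\cdots p_n^{\alpha_n}$ with $|\alpha|=d$, so $\dim_{\bbK}\Ap(f^d)\leq\binom{n+d}{d}$. For the reverse inequality, each such product has lowest-degree part $x_1^{\alpha_1}\cdots x_n^{\alpha_n}$---a distinct monomial for distinct multi-indices $\alpha$---so the products are linearly independent by a triangular argument in the filtration by minimal degree. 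This gives the identity. I expect the main technical obstacle to be implementing the polarization step rigorously, that is, showing that the span of $d$-fold products of partials is actually contained in $\Ap(f^d)$ (the easier inclusion is from Leibniz).

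\emph{Conclusion.} With the identity in hand, Theorem~\ref{ref:intro:smoothability}(2) applies verbatim: since $\Spec\bigl(\Ap(f)\bigr)$ is smoothable by hypothesis, so is $\Spec\bigl(\Ap(f^d)\bigr)\cong S^d\Spec\bigl(\Ap(f)\bigr)$. The tautological embedding of \S\ref{ssec:tautological} realises $\Spec\bigl(\Ap(f^d)\bigr)$ as an apolar subscheme of $\twist{F^d}$; the role of the twist $\sum\lambda_i\bfx^i\mapsto\sum\lambda_i\tfrac{1}{i_0!}\bfx^i$ is precisely to reconcile the ordinary-power convention in $x_0$ of $F^d$ with the divided-power convention inherent in apolarity, so that this embedding lies in $\Ann(\twist{F^d})$. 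This proves part~(2). For part~(1), smoothability of an apolar scheme of length $\binom{n+d}{d}$ immediately gives $\smrk(\twist{F^d}) \leq \binom{n+d}{d}$, upper-bounding border, cactus, and border cactus rank. The matching lower bound comes from the middle catalecticant: its rank equals the dimension of the relevant graded piece of $\Ap(\twist{F^d})$, and one verifies this equals $\binom{n+d}{d}$ by exhibiting enough independent partials of $\twist{F^d}$ at the middle degree, using the same normal form $p_i = x_i + q_i$ coming from encompassing.
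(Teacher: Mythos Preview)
Your overall architecture matches the paper: establish $\dim_{\bbK}\Ap(f)=n+1$, prove the key identity $\dim_{\bbK}\Ap(f^d)=\binom{n+d}{d}$, invoke \autoref{ref:intro:smoothability}(2), use the tautological scheme for apolarity (this is exactly \autoref{prop:BJMR18_Corollary_4}), and bound below by a catalecticant. The difference lies entirely in how you justify the key identity, and there you have a genuine gap.

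Your triangular argument correctly shows that the products $p^{\alpha}$ are linearly independent, but this only computes $\dim_{\bbK}\calF^d=\binom{n+d}{d}$ where $\calF=\calD\circ f$. To conclude about $\calD\circ f^d$ you need the inclusion $\calF^d\subseteq\calD\circ f^d$, and your ``polarization'' sketch does not deliver it: writing $g_1\cdots g_d$ as a combination of $d$-th powers $(\sum a_i g_i)^d$ is useless, since those powers are not partials of $f^d$. What does work---and what your write-up misses---is that encompassing means (via \autoref{lem:encompassingInDualVariables}) every $g\in\calF$ is $\sigma\circ f$ for some $\sigma\in\calD_{\leq 1}$. With this, an induction on $k$ shows $\calF^k\cdot f^{d-k}\subseteq\calD\circ f^d$: for linear $\sigma$ and $h'\in\calF^k$ one has
\[
(d-k)\,h'(\sigma\circ f)f^{d-k-1}=\sigma\circ(h'f^{d-k})-(\sigma\circ h')f^{d-k},
\]
and both terms on the right lie in $\calD\circ f^d$ by induction, since $\calF^k$ is $\calD$-stable. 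So your route can be completed, but the missing step is not ``polarization''; it is precisely the first-order surjectivity encoded in the encompassing hypothesis.

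The paper takes a different path to the same identity (\autoref{prop:encompassingImpliesMaximalGrowth}): after normalising so that $\alpha_n\circ f=1$, it shows directly that $\Ann(f^d)\cap\calD_{\leq d}=0$ by computing a putative annihilator modulo $f^{d-d'+1}$ and using that the resulting expression is independent of $x_n$ while any nonzero multiple of $f$ is not. This also yields $\ldf(\calD\circ f^d)=\calR_{\leq d}$, which the paper then reuses for the catalecticant lower bound: it takes the degree-$d$ (not the middle) catalecticant of $\twist{F^d}$, and the vanishing of $\Ann(f^d)$ in degrees $\leq d$ transfers, via the computation~\eqref{eq:correctedBRequation}, to injectivity of that catalecticant, giving rank $\binom{n+d}{d}$. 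Your final paragraph gestures at this but does not explain why independence of partials of $f^d$ lifts to independence of partials of $\twist{F^d}$; the $\ldf$ statement (every nonzero element of $\calD\circ f^d$ has lowest-degree part in degree $\leq d$) is exactly what makes that transfer go through.
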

\begin{exam}\label{ex:quadrics}
    Let $Q$ be a full rank quadric given by \[Q \coloneq x_0x_n + x_1^2 + x_2^2 +  \ldots +
    x_{n-1}^{2}.
\]
    The dehomogenization $p_n \coloneq Q|_{x_0 = 1}$ is encompassing.
    By \autoref{ref:intro:mainThmForms}, for every $d\geq 1$ the form
    $\twist{Q^d}$ has
    smoothable and border ranks equal to
    \[
        \binom{n+d}{d}.
    \]
    For ternary
    quadratic forms and without twisting this result was obtained by the first author
    in~\cite{Fla23}*{Theorem 4.5}. We warn that not every dehomogenization
    of $Q$ is encompassing: consider $Q|_{\sum_{i=0}^n \lambda_ix_i = 1}$, where $\lambda_i\in \bbK$. If $Q(\lambda_0,\ldots,\lambda_n) = 0$, then, up to coordinate change, $Q|_{\sum_{i=0}^n \lambda_ix_i = 1}$ is $Q|_{x_0 = 1}$, so it is encompassing. However, if $Q(\lambda_0,\ldots,\lambda_n) \neq 0$, then $Q|_{\sum_{i=0}^n \lambda_ix_i = 1}$ is, up to coordinate change, equal to $1 + x_1^2 + \ldots + x_n^2$, which is not encompassing. In particular, a general dehomogenization is not encompassing.
\end{exam}
\begin{exam}\label{ex:cubics}
    Let us consider the cubic
    \[F \coloneq x_1^3 + x_2^3 +  \dots + x_n^3 + x_{0}\left( x_1y_1 +  \dots +
    x_ny_n \right) + x_0^2y_0\] in $2(n+1)$ variables. The
    dehomogenization $F|_{x_0 = 1}$ is encompassing and the apolar algebra
    \[\Ap(F|_{x_0=1})  \simeq \Ap(x_1^3 + x_2^3 +  \ldots + x_n^3)\] is
    smoothable, see~\cite[Theorem~3.3]{EliasRossiShortGorenstein} or~\cite[Example~2.16]{Jel17} for the isomorphism. Hence, by
    \autoref{ref:intro:mainThmForms}, for every $d\geq 1$ we have
\[
    \brk\bigl(\twist{F^d}\bigr)=\binom{2n+d+1}{d}.
\]

\end{exam}
We warn the reader that twisting is necessary: even for $F$ satisfying the
assumptions of \autoref{ref:intro:mainThmForms}, the border rank of {both the square} $F^2$ and {the divided square}
$F^{[2]}$ can be higher than $\binom{n+1}{2}$, see
\autoref{ex:bigSquareForEncompassing}.

The theorem implies that $\twist{F}$ has the minimal possible border
rank $n$ and smoothable rank also equal to $n$, so $\twist{F}$ is not wild in the
sense of~\cite{BB15}.
A natural question is: \emph{how limiting are the assumptions in
    \autoref{ref:intro:mainThmForms}?} The following theorem shows that the
    smoothability of $\Ap(f)$ is the primary requirement: {we may always construct an encompassing polynomial that restricts to ours by adding a \emph{minimal possible} number of new variables.}

We say that a polynomial $G\in \bbK[x_0, \ldots ,x_n]$ \emph{restricts to} a
polynomial $F\in \bbK[x_0, \ldots ,x_k]$, where $k\leq n$, if $F =
G|_{x_{k+1} =0, \ldots ,x_n=0}$.

\begin{teo}\label{ref:intro:universalityTheorem}
    Let $F\in \bbK[x_0, \ldots ,x_k]$ be a concise homogeneous polynomial, let
    $f\coloneq F|_{x_0=1}$ be its dehomogenization, and assume that $\Ap(f)$
    be smoothable. If $\dim_{\bbK} \bigl(\Ap(f)\bigr)=n$, then there exists a concise
    homogeneous polynomial $G\in \bbK[x_0, \ldots ,x_n]$ restricting to
    $F$ and such that $G$ satisfies the assumptions of
    \autoref{ref:intro:mainThmForms}. In particular,
    \[
        \brk(\twist{F^d})\leq\binom{n+d}{d},
    \]
    for every $d\in\bbN$.
    Moreover, for $g\coloneq G|_{x_0=1}$ we have $\Ap(g) \simeq
    \Ap(f)$.
\end{teo}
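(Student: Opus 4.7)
The plan is to explicitly construct the extension $G$ meeting the hypotheses of \autoref{ref:intro:mainThmForms}, and then deduce the border rank bound by combining that theorem with the fact that border rank is non-increasing under coordinate restriction. For the construction, I would fix a $\bbK$-basis $1 = p_1, p_2, \ldots, p_n$ of the space of partials $T_k \cdot f$, where $T_k = \bbK[\alpha_1, \ldots, \alpha_k]$, and lift each $p_i$ to an operator $\beta_i \in T_k$ via $\beta_i \cdot f = p_i$. The failure of $f$ itself to satisfy the hypotheses of \autoref{ref:intro:mainThmForms} is precisely that the $\leq 1$ parts of the $p_i$'s are linearly dependent in the $(k+1)$-dimensional degree-$\leq 1$ subspace of $\bbK[x_1, \ldots, x_k]$. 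The remedy is to add $n - k$ new variables $x_{k+1}, \ldots, x_n$ and define
\[
    g \;\coloneq\; f \;+\; \sum_{i=1}^{n-k}\, x_{k+i} \cdot (\beta_{\sigma(i)} \cdot f),
\]
where $\sigma$ is an indexing chosen so that each added term supplies a missing $\leq 1$-degree signature. Letting $G$ be the $x_0$-homogenization of $g$, one has $G|_{x_{k+1} = \cdots = x_n = 0} = F$ automatically.

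The next step is to verify the hypotheses of \autoref{ref:intro:mainThmForms} for $G$: conciseness of $G$ (each new variable produces an independent linear partial of $G$ coming from the homogenization of $\beta_{\sigma(i)} \cdot f$), encompassing of $g$ (built into the definition, since the new variables contribute the missing low-degree signatures), and $\Ap(g) \simeq \Ap(f)$. The isomorphism should follow from the calculation $\alpha_{k+i} \cdot g = \beta_{\sigma(i)} \cdot f \in T_k \cdot f$ together with $\alpha_{k+i} \alpha_{k+j} \cdot g = 0$, which exhibits each new differential operator $\alpha_{k+i}$ as acting on $g$ exactly the way $\beta_{\sigma(i)}$ acts in $\Ap(f)$, so that the natural inclusion $T_k \hookrightarrow T_n$ descends to an isomorphism; smoothability of $\Ap(g)$ is then inherited from the hypothesis on $\Ap(f)$. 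With all hypotheses in place, \autoref{ref:intro:mainThmForms} applied to $G$ gives $\brk(\twist{G^d}) = \binom{n+d}{d}$. Since $F^d = G^d|_{x_{k+1} = \cdots = x_n = 0}$ and the twist commutes with coordinate restriction, $\twist{F^d}$ is the image of $\twist{G^d}$ under a linear projection of the ambient Veronese; since border rank is non-increasing under linear projection, one concludes $\brk(\twist{F^d}) \leq \binom{n+d}{d}$.

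The hard step is verifying $\Ap(g) \simeq \Ap(f)$. Applying $\alpha_j$ with $j \leq k$ to $g$ picks up cross-terms $\sum_i x_{k+i}(\alpha_j \beta_{\sigma(i)} \cdot f)$ involving the new variables, and these lie outside $T_k \cdot f$; ensuring that they do not create new linearly independent partials of $g$ (and hence do not inflate the dimension of $\Ap(g)$ beyond $n$) forces subtle compatibility constraints on the products $\alpha_j \beta_{\sigma(i)}$ in $\Ap(f)$. Choosing the $\beta_{\sigma(i)}$'s so that these constraints are satisfied—possibly using higher-degree monomials in the new variables in place of purely linear ones—while simultaneously preserving encompassing of $g$ and conciseness of $G$ is the main combinatorial-algebraic content of the argument, and likely uses the Gorenstein and smoothability structure of $\Ap(f)$ to provide enough freedom in the choice of lifts $\beta_i$.
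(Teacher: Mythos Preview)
Your overall strategy is right and matches the paper: construct an explicit $g$ in more variables with $\Ap(g)\simeq \Ap(f)$ and $g$ encompassing, homogenize to $G$, apply \autoref{ref:intro:mainThmForms} to $G$, and then restrict. The last part of your argument (twist commutes with setting $x_{k+1}=\cdots=x_n=0$, border rank is monotone under restriction) is fine and is exactly what the paper does.

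The gap is in the construction of $g$. Your formula $g = f + \sum_i x_{k+i}(\beta_{\sigma(i)}\cdot f)$ is only the first-order truncation of what is needed, and you correctly sense this when you write ``possibly using higher-degree monomials in the new variables''. With only the linear terms, the relation you want, namely that the new partial $\partial/\partial x_{k+i}$ acts on $g$ exactly as $\beta_{\sigma(i)}$ does, fails: one computes $(\partial/\partial x_{k+i} - \beta_{\sigma(i)})\circ g = -\sum_j x_{k+j}(\beta_{\sigma(i)}\beta_{\sigma(j)}\circ f)$, which is nonzero as soon as any product $\beta_{\sigma(i)}\beta_{\sigma(j)}$ acts nontrivially on $f$. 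This is exactly the cross-term problem you flag, and it cannot be fixed by a clever choice of the lifts $\beta_{\sigma(i)}$ alone (the paper gives an example, a sum of fourth powers, where quadratic terms in the new variables are genuinely required).

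The missing idea is to take the full exponential rather than its linearization: set
\[
g \;=\; \sum_{\bfa\in\mathbb{Z}_{\geq 0}^{n-k-1}} \frac{\bfy^{\bfa}}{\bfa!}\bigl(\sigma^{\bfa}\circ f\bigr),
\]
with $\sigma_1,\ldots,\sigma_{n-k-1}\in\calD_{\geq 2}$ completing $1,\alpha_1,\ldots,\alpha_k$ to a basis of $\Ap(f)$. A one-line computation shows $(\beta_i-\sigma_i)\circ g=0$ for every $i$, so $\Ann(g)$ is the image of $\Ann(f)\cdot\hat{\calD}+(\beta_1,\ldots,\beta_{n-k-1})$ under the ring automorphism $\varphi$ of $\hat{\calD}$ sending $\beta_i\mapsto\beta_i-\sigma_i$. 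Because $\varphi$ is an automorphism, $\Ap(g)\simeq\Ap(f)$ is immediate, with no combinatorial constraints to check; and since $\varphi$ carries the basis $1,\alpha_i,\sigma_j$ of $\Ap(f)$ to $1,\alpha_i,\beta_j$, the image of $\hat{\calD}_{\leq 1}$ already spans $\Ap(g)$, which is precisely the encompassing condition. Note finally that neither Gorensteinness beyond what is automatic nor smoothability plays any role in this construction; smoothability of $\Ap(f)$ enters only at the very end, to feed into \autoref{ref:intro:mainThmForms}.
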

The proof of \autoref{ref:intro:universalityTheorem} is constructive and the form $G$
is determined explicitly. Namely, let us
take a concise homogeneous $F\in \bbK[x_0,  \ldots ,x_k]$ and $f = F|_{x_0=1}$. The apolar
algebra $\Ap(f)$ is a quotient of the dual ring $\calD := \bbK[\alpha_1, \ldots
,\alpha_k]$, where
\[\alpha_i\coloneq\frac{\partial}{\partial x_i}\]
for every $i=1,\dots,k$.
Let us consider a basis $\calB$ of $\Ap(f)$ given by
\[
\calB\coloneq\{ f, \alpha_1\circ f, \ldots, \alpha_k \circ f, \sigma_{1}\circ f, \ldots ,\sigma_{n-k-1}\circ f \},
\]
where $\sigma_i\in \calD_{\geq 2}$.
For any multi-index $\bfa\in \mathbb{Z}_{\geq 0}^{n-k-1}$, let us define
\[\sigma^{\bfa} \coloneq \sigma_1^{\bfa_1} \cdots
    \sigma_{n-k-1}^{\bfa_{n-k-1}}\quad\mbox{and}\quad \bfy^{\bfa} \coloneq y_1^{\bfa_1} \cdots
y_{n-k-1}^{\bfa_{n-k-1}}.\]
Then the polynomial $g\in \bbK[x_1, \ldots ,x_k, y_{1}, \ldots ,y_{n-k-1}]$ is defined by a
Taylor-like series
\begin{equation}\label{eq:exponent}
        g\coloneq \sum_{\bfa\in \mathbb{Z}_{\geq 0}^{n-k-1}} \frac{\bfy^{\bfa}}{\bfa!}(\sigma^{\bfa}\circ f)
\end{equation}
and $G\in \bbK[x_0, \ldots ,x_k, y_1, \ldots ,y_{n-k-1}]$ is the
homogenization of $g$ with respect to $x_0$. To obtain $G$ in $\bbK[x_0, \ldots
,x_n]$ one takes $x_j \coloneq y_{j-k}$ for $j > k$.
\begin{exam}
    To obtain \autoref{ex:quadrics}, let us consider the form
    \[F \coloneq x_1^2 +  \cdots + x_{n-1}^2\in
    \bbK[x_0, \ldots ,x_{n-1}].
    \]
    Then $f \coloneq F|_{x_0=1}$ is essentially equal to
    $F$. Let us fix the set
    \[
    \{1, \alpha_1, \ldots,\alpha_{n-1}, \sigma_1\},
    \]
    where $\sigma_1 \coloneq \frac{1}{2}\alpha_1^2$,
    as the basis of $\Ap(f)$. Then, we have \[
    \sigma_1^2\circ f = \sigma_1 \circ 1 = 0,\]
    so that the polynomial defined in formula \eqref{eq:exponent} is $g=f + y_1$. The homogenization with respect to
    $x_0$ yields \[ x_1^2 +  \cdots + x_{n-1}^2 + x_0y_1,\] as expected.
\end{exam}
\begin{exam}
    To obtain \autoref{ex:cubics}, let us consider the form
    \[ F \coloneq x_1^3 + \cdots + x_n^3\in
    \bbK[x_0, \ldots ,x_n].\] Then, we take $f \coloneq F|_{x_0=1}$ and
    as a basis of $\Ap(f)$ fix the set
\[
\{1,\alpha_1,\dots,\alpha_n,\sigma_1,\dots,\sigma_n,\sigma_{n+1}\},
\]
with $\sigma_i \coloneq \alpha_i^2/2$ for $i=1, \ldots , n$ and
    $\sigma_{n+1} \coloneq \alpha_1^3/6$.
    For every $i,j$ we have $\sigma_i\circ (\sigma_j \circ f) = 0$, so that
    the formula \eqref{eq:exponent} becomes
    \[
      g =  f + x_1y_1 +  \ldots + x_ny_n + y_{n+1},
    \]
    which homogenizes to the form from the example.
\end{exam}

Having discussed the abundance of encompassing polynomials, we present a more geometric point of view for them.
In the projective setting, there exist elegant classical links between the algebra of
a homogeneous polynomial $F\in \bbK[x_0, \ldots ,x_n]$ and the topology of its
gradient map
\[
    \grad(F) = \left( \frac{\partial F}{\partial x_0},  \ldots ,
    \frac{\partial F}{\partial x_n} \right) \colon \mathbb{P}^{n}\dashto \mathbb{P}^{n},
\]
also called the \textit{polar map}. For further details, the reader can see, e.g., \cites{huh2012milnor, DP03, Dol00,
HKS92}.

In the affine setting, we prove the following analogue which in
particular gives a characterization of polynomials which have dominant affine
gradient maps and a characterization of polynomials with maximal growth of
powers. We need one definition. By \autoref{prop:inequalityOnDuals} for every
polynomial $f$ with $\dim_{\bbK}\bigl(\Ap(f)\bigr)=\ell$ and every $d\geq 1$
we have
\begin{equation}\label{eq:growth}
\dim_{\bbK} \Ap(f^d) \leq \binom{\ell + d-1}{d}.
\end{equation}
We say that $f$ has \textit{maximal growth of powers} if for every $d\geq 1$ equality holds
in~\eqref{eq:growth}.
\begin{teo}\label{ref:intro:growth}
    Let $f\in \bbK[x_1, \ldots ,x_n]$ be a concise polynomial {and take $\ell \coloneq \dim_{\bbK}\bigl(\Ap(f)\bigr)$}. Then, the following conditions
    are equivalent:
    \begin{enumerate}[label=(\arabic*), left= 3pt, widest=2,nosep]
        \item\label{it:growthOne} $f$ has maximal growth of powers;
        \item\label{it:growthTwo} the \emph{total gradient} map
            \[
                \totalgrad(f) = \left( \frac{\partial f}{\partial \bfx^{\bfa_1}}, \ldots ,
                \frac{\partial f}{\partial \bfx^{\bfa_{\ell-1}}} \right)\colon
                \mathbb{A}^{n}\dashto
                \mathbb{A}^{\ell-1}
            \]
            is dominant, where the set
            \[ \biggl\{\frac{\partial f}{\partial
                \bfx^{\bfa_{i}}}\biggr\}_i\cup\{1\}\] forms a basis of the space of all
                partials of $f$;
        \item\label{it:growthThree} the partials of $f$ are homogeneously
            algebraically independent (see \S\ref{sec:encompassing});
        \item\label{it:growthFour} $f$ is an encompassing polynomial.
    \end{enumerate}
    Moreover, if these conditions hold, then $\ell = n+1$ and the {total} gradient map $\totalgrad(f)$
    is, up to coordinate changes, the usual gradient map
    \[
        \grad(f) = \left( \frac{\partial f}{\partial x_1}, \ldots ,
    \frac{\partial f}{\partial x_n} \right)\colon
    \mathbb{A}^n\dashto
    \mathbb{A}^n.
    \]
\end{teo}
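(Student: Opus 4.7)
Identifying $\Ap(f)$ with the space $P_f \subseteq \bbK[\bfx]$ of partials of $f$, we have $\ell = \dim P_f$; conciseness of $f$ gives $1, x_1, \ldots, x_n \in P_f$ and hence $\ell \geq n+1$. The plan is to use the condition $\ell = n+1$ as the hub through which the four statements connect. Consider the truncation
\[
    \pi\colon P_f \longrightarrow \bbK[\bfx]_{\leq 1},\qquad g \mapsto g_{\leq 1}.
\]
By conciseness $\pi$ is surjective, so its injectivity (which is exactly condition (4)) is equivalent to $\dim P_f = n+1$.

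Under $\ell = n+1$ one can choose a basis $g_0 = 1$, $g_i = x_i + h_i$ of $P_f$ with $h_i$ of order $\geq 2$; then the Jacobian $\bigl(\partial g_i / \partial x_j\bigr)$ at the origin is the identity matrix, so $g_1, \ldots, g_n$ are algebraically independent, giving (3). Conversely, algebraic independence in (3) of $\ell - 1$ polynomials in $n$ variables forces $\ell - 1 \leq n$, hence $\ell = n+1$. The equivalence (2) $\iff$ (3) is the standard dominance criterion: a polynomial map $\bbA^n \dashrightarrow \bbA^m$ is dominant if and only if its components are algebraically independent over $\bbK$. Combined with the previous paragraph this yields (2) $\iff$ (3) $\iff$ (4).

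For (1) $\iff$ (3) I will use the inclusion $\Ap(f^d) \subseteq I_d := \im\bigl(\Sym^d(P_f) \to \bbK[\bfx]\bigr)$ coming from Leibniz's rule (as in \autoref{prop:inequalityOnDuals}), which gives $\dim \Ap(f^d) \leq \dim I_d \leq \binom{\ell + d - 1}{d}$. Maximality for every $d$ forces the multiplication $\Sym^d(P_f) \to \bbK[\bfx]$ to be injective for every $d$, equivalent to algebraic independence of $g_1, \ldots, g_{\ell-1}$; this proves (1) $\Rightarrow$ (3). For the converse, under the normalized basis $g_i = x_i + h_i$ the monomials $\bfg^{\bfa}$ for $|\bfa| \leq d$ have pairwise distinct leading monomials $\bfx^{\bfa}$, so $\dim I_d = \binom{n+d}{d}$. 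The remaining step, which is also the \emph{main obstacle} of the argument, is to prove $\Ap(f^d) = I_d$: Leibniz produces only symmetric combinations
\[
\partial^{\alpha}(f^d) = \sum_{\alpha_1 + \cdots + \alpha_d = \alpha} \binom{\alpha}{\alpha_1, \ldots, \alpha_d} \prod_{k=1}^{d} \partial^{\alpha_k} f
\]
of products of partials of $f$, and isolating an individual monomial $\bfg^{\bfa}$ requires a careful induction on $d$. Starting from $\partial_i(f^d) = d\,f^{d-1}\partial_i f$ and expanding each $\partial_i f$ in the basis of $P_f$, one extracts $f^{d-1} g_j \in \Ap(f^d)$; continuing to differentiate and feeding in the inductive hypothesis for $f^{d-1}$ recovers every product $\bfg^{\bfa}$. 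Finally, the moreover part follows at once: $\ell = n+1$ was established above, and the basis $g_i = x_i + h_i$ exhibits $\totalgrad(f) = (g_1, \ldots, g_n)$ as the usual gradient map, up to a coordinate change absorbing the higher-order terms $h_i$.
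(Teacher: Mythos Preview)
Your overall architecture matches the paper's: both pivot on $\ell=n+1$, prove (3)$\Leftrightarrow$(4) via bijectivity of $\calD_{\leq 1}\to\Ap(f)$, deduce (1)$\Rightarrow$(3) from the surjection $S^d P_f\twoheadrightarrow I_d$, and handle (2)$\Leftrightarrow$(3) by transcendence degree. The genuine difference is in (4)$\Rightarrow$(1). The paper (\autoref{prop:encompassingImpliesMaximalGrowth}) works on the annihilator side: assuming some $\Phi$ of degree $d'\leq d$ kills $f^d$, it reduces modulo $f^{d-d'+1}$ and exploits a distinguished coordinate $x_n$ chosen so that $\alpha_n\circ f=1$ (hence nothing in $\calD_{\geq 1}\circ f$ involves $x_n$) to force $\Phi_{d'}(\partial_1 f,\ldots,\partial_n f)=0$, contradicting \autoref{lem:encompassingImpliesHomAlgIndep}. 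Your approach instead builds $\calD\circ f^d$ from below, inductively proving $f^{d-k}P_f^{\,k}\subseteq\calD\circ f^d$. This is a legitimate and arguably cleaner alternative; it avoids the special-variable trick and directly establishes the equality $\calD\circ f^d=I_d$ rather than just the dimension.

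Two points need tightening. First, conciseness does \emph{not} give $x_i\in P_f$ (for $f=x_1+x_2^2$ one has $x_1\notin P_f$); what it gives is the injection $\calD_{\leq 1}\hookrightarrow\Ap(f)$, hence $\ell\geq n+1$, and dually the surjectivity of your truncation $\pi$. Second, the step ``one extracts $f^{d-1}g_j$'' is the crux and you should say why it works: from $f^d$ and $\partial_i f^d=d\,f^{d-1}\partial_i f$ you recover all of $f^{d-1}P_f$ precisely because $\{f,\partial_1 f,\ldots,\partial_n f\}$ is a basis of $P_f$ (linear independence from conciseness plus a degree comparison, spanning from $\ell=n+1$). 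The induction is then on the number $k$ of $g$-factors, not on the exponent $d$ as your phrase ``inductive hypothesis for $f^{d-1}$'' suggests: at each stage you differentiate $f^{d-k}g_{j_1}\cdots g_{j_k}$, subtract the terms lying in $f^{d-k}P_f^{\,k}$ already obtained, and again invoke that $\{f,\partial_1 f,\ldots,\partial_n f\}$ spans $P_f$ to pass to $f^{d-k-1}P_f^{\,k+1}$.
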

A word of caution is that the rational map $\grad(f)$ yields a
rational map $\mathbb{P}^n\dashto \mathbb{P}^n$, but this map is in
general \emph{not} equal to $\grad(f^h)$ for a homogenization
$f^h$ of $f$.

Outside the class of encompassing polynomials, the following natural question
seems absolutely open.
\begin{question}
    Which sequences can be obtained as $\bigl(\dim_{\bbK} \Ap(f^d)\bigr)_{d\geq 1}$?
\end{question}
Specializing a bit, one could wonder
whether knowing first few dimensions is enough to know the whole sequence.
This is an open question as well. In \autoref{ref:growthuptodegImpliesEncompassing:cor} we answer it for
the sequence \[d\mapsto \binom{\ell+d-1}{d}.\]
We show that
if $\dim_{\bbK} \Ap(f^d)$ is maximal for all $d\leq \deg(f)$, then $f$ is
encompassing.

\subsection{Applications to complexity theory} In the last part of the article our main results are:
\begin{enumerate}[label=(\arabic*), left= 3pt, widest=2,nosep]
    \item \autoref{cor:smoothabilitybrank1}: deciding {the} smoothability of
        algebras implies estimating border rank of tensors, up to a
    factor of $1/2$,
\item \autoref{cor:brankto1Gen}: deciding {the} smoothability of modules implies estimating border rank of tensors $T\in
    \bbK^n\otimes\bbK^n\otimes \bbK^n$ up to an error of $n$,
\item introduction of \emph{sweet pieces}, that is a special class of
    tensors appearing in complexity theory, see~\autoref{def:sweet},
\item estimates of {the} ranks of sweet pieces of interest in \autoref{prop:sweetRank} and \autoref{prop:Prattimproved}.
\end{enumerate}
One of our motivations comes from applications to complexity theory. More precisely, the two major challenges relevant for us are:
\begin{enumerate}[label=(\arabic*), left= 3pt, widest=2,nosep]
\item fast matrix multiplication;
\item special NP-hard problems, like chromatic number of a graph or the set cover problem.
\end{enumerate}
It turns out that in both of the above problems good upper bounds on ranks of Kronecker powers of special tensors can lead to very surprising upper bounds on complexity.
Let us explain in detail how smoothable algebras, their tensor powers, and Veronese subalgebras appear in {the above problems.}

The complexity of matrix multiplication is governed by the constant $\omega$, see \cites{BCS97, landsberg2017geometry} for a detailed exposition, or \cite{michalek2021invitation}*{Chapter 9.3} for a {quick} introduction to the topic.
 The {state-of-the-art} method to bound $\omega$ from above is due to
 D.~Coppersmith and S.~Winograd (see~\cite{CW}). The exact value of $\omega$ remains
 unknown, although upper bounds have been improved in recent years, showing
 that $\omega<2.38$, see~\cites{CW, alman2021refined, williams2023new, alman2024more}. The
 core method starts with a smoothable algebra $A$, which is apolar to a quadric.
 The bilinear map given by multiplication in $A$ gives rise to a tensor $T_A$,
 known as the Coppersmith-Winograd tensor,{which is} of minimal border rank
 $\dim_{\bbK} A$, see~\cite{BL16}. One identifies direct sums of
 matrix multiplication tensors as restriction of the tensor associated to
 $A^{\otimes d}$ for large $d$, which is also the $d$-th Kronecker power of
 $T_A$. This allows to upper bound the border rank of the direct sums by
 $(\dim_{\bbK}A)^d$. {Using} Sch\"onhage's $\tau$-theorem~\cite{BCS97}*{\S15.5} one obtains upper bounds
 on $\omega$.

 The key step in this method is to identify a highly symmetric subtensor $SP_d$ of $T_{A^{\otimes d}}$ and {to prove} that it restricts to the {above} direct sum of matrix multiplications. It turns out that $SP_d$ in many cases is not only a restriciton of the tensor associated to ${A^{\otimes d}}$, but also of its much smaller Veronese subalgebra. Thus, obtaining bounds on border rank or smoothable rank of tensors associated to Veronese subalgebras of smoothable algebras would lead to new upper bounds on $\omega$. In particular, if $SP_d$ was of minimal asymptotic rank, then $\omega=2$.

For the second point one considers some of the well-known NP-hard problems. {Very recent work \cites{bjorklund2023asymptotic, pratt2023stronger,
bjorklund2024chromatic} identifies certain} families of tensors for which
upper bounds on rank would provide algorithms of unexpectedly low
complexity. For example
for a family of tensors $SP_d$ {one could check, in the randomized setting, whether} there exist three sets from a given three (balanced) families whose union is the whole given set of size $n$, in time $C^n$ for {some} constant $C<2$.

In our work, we identify the properties of $SP_d$ to define the class of tensors called the \emph{sweet pieces}, see \autoref{def:sweet}. As argued above, {upper bounds on the ranks of sweet pieces are of central importance.} We provide new methods to upper bound not only border ranks, but also ranks of the sweet pieces $SP_d$, see \Cref{lem:samesweet,lem:sweetCWnobrank} and \autoref{cor:sweetCW=G}.
We apply these results to obtain new, best upper bounds for rank and border
rank of $SP_d$ tensors that appear in the Coppersmith-Winograd tensor
and in the work of K.~Pratt \cite{pratt2023stronger}, see below.
\begin{prop}[\autoref{prop:sweetRank}]\label{prop:introCW}
{Given nonnegative parameters $p,q$ such that $3p+3q=1$ one considers the sweet piece for the $N$-th Kronecker power of the $n\times n\times n$  Coppersmith-Winograd tensor $CW_n$. The rank of any such sweet piece} is {smaller} than
    \[n^N-\binom{N}{(2p+2q)N+1}(n-1)^{(p+q)N-1}.\] In particular, it is
    {smaller} than the border rank of $(CW_n)^{\boxtimes N}$.
\end{prop}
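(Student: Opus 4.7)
The plan is to combine an explicit rank decomposition of $(CW_n)^{\boxtimes N}$ with the structural results on sweet pieces proved earlier, notably \autoref{lem:samesweet}, \autoref{lem:sweetCWnobrank}, and \autoref{cor:sweetCW=G}. The rank of a subtensor is bounded above by the number of rank-one contributions of an ambient decomposition whose restrictions to the sweet block are nonzero, so the strategy is to fix a Kronecker-power decomposition of $(CW_n)^{\boxtimes N}$ into $n^N$ rank-one terms and then to produce a positive lower bound on the number of such terms that vanish identically on any sweet block.

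First I would start from a rank-$n$ (or minimal-border-rank-achieving) decomposition of $CW_n$ as the structure tensor of a smoothable algebra apolar to a quadric, and take its $N$-fold Kronecker power to obtain $n^N$ rank-one tensors indexed by length-$N$ words in an $n$-letter alphabet. Next I would translate the defining support conditions of a sweet piece from \autoref{def:sweet} into combinatorial constraints on such words: the parameters $p$ and $q$ prescribe how many special versus ordinary symbols appear in each of the three tensor factors, and a rank-one term restricts to zero on the sweet block whenever its three-factor labels violate any of these prescriptions. Enumerating the forbidden words by first choosing the positions hosting the special symbols, and then filling the remaining positions with labels from the $n-1$ ordinary symbols, gives the count $\binom{N}{(2p+2q)N+1}(n-1)^{(p+q)N-1}$; subtracting it from $n^N$ yields the stated upper bound.

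For the ``in particular'' clause I would invoke multiplicativity of border rank under Kronecker product: since $\underline{R}(CW_n)=n$, the border rank of $(CW_n)^{\boxtimes N}$ is exactly $n^N$, strictly larger than the rank bound just established. The main obstacle will be making the combinatorial count of vanishing terms airtight, in particular verifying that the enumerated patterns are genuinely forced to restrict to zero (not merely generically), that no rank-one summand is overcounted across the three factors, and that the bound is uniform over all choices of sweet-piece parameters; the remaining steps should be routine given the framework of \Cref{lem:samesweet,lem:sweetCWnobrank} and \autoref{cor:sweetCW=G}.
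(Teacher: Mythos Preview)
There is a genuine gap. The mechanism you propose---take a rank-$n^N$ decomposition of the Kronecker power and discard the rank-one summands whose restriction to the sweet block vanishes---does not yield the claimed saving. First, $CW_n$ has rank strictly larger than $n$ (only its \emph{border} rank equals $n$), so there is no rank-$n$ decomposition of $CW_n$ to start from; one must pass to $T_G$ via \autoref{cor:sweetCW=G}, where the rank really is $|G|$ thanks to the discrete Fourier transform. More seriously, the rank-one summands in the Fourier decomposition of $(T_G)^{\boxtimes N}$ are tensor products of characters of $G$, and every character of a finite abelian group has all coordinates nonzero. Hence \emph{no} summand restricts to zero under projection to any nonempty coordinate subspace, so with this decomposition your count of vanishing summands is zero and the bound collapses to $n^N$.

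What your enumeration actually describes---length-$N$ words with $(2p+2q)N+1$ copies of a distinguished symbol and the remaining $(p+q)N-1$ slots filled from the other $n-1$ symbols---is a set of \emph{third-factor basis vectors} (sequences of group elements), not rank-one summands. The paper shows that for each such sequence the corresponding slice of the chimney containing the sweet piece is zero, while the same slice of the full tensor $(T_G)^{\boxtimes N}$ is nonzero. One then applies the classical substitution method of Pan: each such slice allows the rank of the ambient tensor to drop by one, and because that slice is zero over the chimney the substitution does not disturb the sweet piece. After iterating over all such slices one obtains an ambient tensor of rank at most $n^N-\binom{N}{(2p+2q)N+1}(n-1)^{(p+q)N-1}$ still containing the sweet piece as a restriction. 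This substitution step is the missing ingredient; your restriction-of-summands idea is counting the right combinatorial object but attaching it to the wrong rank-reduction principle.
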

\begin{prop}[\autoref{prop:Prattimproved}]
Consider the tensors associated to the bilinear multiplication map
\[({\CC[x]/(x^2)}^{\otimes 3k})_k\times ({\CC[x]/(x^2)}^{\otimes 3k})_k\rightarrow ({\CC[x]/(x^2)}^{\otimes 3k})_{2k}\]
where the subindex denotes the degree (these are precisely the tensors from
\cite{pratt2023stronger}).
These tensors have rank at most
\[\frac{1}{2}8^{k}-\sum_{i=k+1}^{\lfloor 3k/2\rfloor} \binom{3k}{2i}.\]
\end{prop}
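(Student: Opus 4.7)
The plan is to cast the Pratt tensor as a sweet piece in the sense of \autoref{def:sweet} and to apply the sweet-piece rank bounds established just above, in particular \Cref{lem:samesweet,lem:sweetCWnobrank} and \autoref{cor:sweetCW=G}. Set $A\coloneq\bbK[x]/(x^2)$ and let $T_N$ denote the structure tensor of $A^{\otimes 3k}$, with monomial basis $\{x_U:U\subseteq[3k]\}$ graded by total degree and product $x_S\cdot x_T=x_{S\cup T}$ when $S\cap T=\emptyset$ (and $0$ otherwise). The Pratt tensor is the bi-graded slice
\[
(A^{\otimes 3k})_k\otimes(A^{\otimes 3k})_k\longrightarrow(A^{\otimes 3k})_{2k}
\]
of $T_N$, which is $\Sigma_{3k}\times\mathbb{Z}/2$-equivariant and thus a sweet piece of $T_N$.

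First, I would note that the even-parity Veronese subalgebra $B\coloneq\bigoplus_{i\geq 0}(A^{\otimes 3k})_{2i}$ is smoothable by \autoref{ref:quotients:prop}, applied to the $\mathbb{Z}/2$-action $x_i\mapsto -x_i$ on $A^{\otimes 3k}$, and has dimension
\[
\dim_{\bbK} B=\sum_{i=0}^{\lfloor 3k/2\rfloor}\binom{3k}{2i}=\tfrac{1}{2}\,8^{k}.
\]
Applying \autoref{cor:sweetCW=G} in conjunction with the sweet-piece analysis of \Cref{lem:samesweet,lem:sweetCWnobrank} then furnishes a rank decomposition whose rank-one summands are indexed by the even-size subsets $U\subseteq[3k]$ (equivalently, by the monomial basis of $B$), with the target factor of the summand attached to $U$ lying in the graded piece $(A^{\otimes 3k})_{|U|}$.

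Next, I would exploit the grading: the rank-one summand anchored at $x_U$ with $|U|=2i$ contributes only to the component $(A^{\otimes 3k})_{2i}$ of the output. Hence, for $i>k$ these summands vanish on the Pratt slice and can be discarded, removing exactly $\sum_{i=k+1}^{\lfloor 3k/2\rfloor}\binom{3k}{2i}$ terms and leaving $\tfrac{1}{2}8^{k}-\sum_{i=k+1}^{\lfloor 3k/2\rfloor}\binom{3k}{2i}$ summands that together realize the Pratt tensor.

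The main obstacle is verifying the graded concentration of the decomposition supplied by \autoref{cor:sweetCW=G}, namely that each rank-one summand is supported on a single graded piece of the target, so that the high-degree anchor terms can be dropped without loss. This is a structural refinement paralleling the combinatorial bookkeeping in the proof of \autoref{prop:sweetRank}, and I expect it to follow from the explicit form of the sweet-piece construction for the monomial algebra $A^{\otimes 3k}$, combined with the $\Sigma_{3k}$-symmetry that canonically distinguishes the graded pieces.
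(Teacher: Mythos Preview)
Your proposal hinges on the step you flag as the main obstacle, and that step does not go through. The rank decomposition supplied by \autoref{cor:sweetCW=G} comes from the discrete Fourier transform of $T_{(\bbZ/2)^{3k}}$: its $8^k$ rank-one summands are indexed by characters $\chi_U$ of $(\bbZ/2)^{3k}$, and the third factor of the $\chi_U$-summand is (up to scalar) the idempotent $\sum_{S}(-1)^{|U\cap S|}e_S$, which is nonzero in \emph{every} graded piece of $A^{\otimes 3k}$. Hence no summand is concentrated in a single degree and none can be discarded; in particular there is no route from that decomposition to $\tfrac12 8^k$ terms indexed by even subsets. The separate appeal to smoothability of the even Veronese $B$ via \autoref{ref:quotients:prop} does not repair this: smoothability bounds the \emph{border} rank of $T_B$, not its rank, and for odd $k$ the Pratt slice lies in the odd part of $A^{\otimes 3k}$ and so is not even a restriction of $T_B$.

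The paper's argument is different and needs no graded decomposition. Writing $R_j$ for the span of size-$j$ subsets, one observes that the Pratt tensor is the restriction of the rank-$8^k$ group tensor $T_{(\bbZ/2)^{3k}}$ to $R_k\otimes R_k\otimes R_{2k}$: the group law is symmetric difference, and $|S|=|T|=k$ together with $|S\triangle T|=2k$ forces $S\cap T=\varnothing$, recovering the rule $x_Sx_T=x_{S\cup T}$. One then applies the substitution method, exactly as in \autoref{prop:sweetRank}, along the third-factor layers $e_U$ with $U$ not of the form $S\triangle T$ for $|S|=|T|=k$. Each such layer is nonzero in the full group tensor but zero over the chimney $R_k\otimes R_k$, so substituting it out lowers the rank bound by one without altering the Pratt piece. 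Since a symmetric difference of two $k$-subsets has even cardinality at most $2k$, the surviving layers number $\sum_{i=0}^{k}\binom{3k}{2i}=\tfrac12 8^k-\sum_{i=k+1}^{\lfloor 3k/2\rfloor}\binom{3k}{2i}$.
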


The improvements in the upper bounds {may not seem large.} Indeed, to obtain new estimates {for} $\omega$, one needs to obtain in \autoref{prop:introCW} an upper bound of {the form} $n^{CN}$, where $C<1$. {Nevertheless}, we believe that improving the upper bounds of sweet pieces is important, as e.g.~for the set covering problem \emph{subexponential} improvements may already refute the conjectures about the complexity of the problem \cite{pratt2023stronger}*{Corollary 1.12}. {In addition,} recently new barrier results appeared \cites{CVZ21, blaser2020slice}, {which show that it is essentially impossible to prove $\omega=2$ using only tensors from a class that includes} the Coppersmith-Winograd tensor. However, these barrier results are not valid, if one improves upper bounds on {the} ranks of the sweet pieces. Experts may notice that the sweet pieces of interest have asymptotically largest subrank.

\section{\normalfont \scshape\fontfamily{ptm}\selectfont Preliminaries}\label{sec:prelims}
\noindent
In the study of tensor decomposition, the quantity that has appeared most frequently in the literature is the \textit{Waring rank} of a
symmetric tensor or, equivalently, of a homogeneous polynomial. For any $f\in
S^dV$, the Waring rank, or simply the rank, of $f$ is {the} minimal number of linear
forms such that $f$ can be expressed as a linear combination of the $d$-th
powers of such forms. Over the years, many other concepts related to
symmetric tensors have been introduced, such as border rank, cactus rank, and
smoothable rank. In general, important tools for tensor decompositions are provided by the theory of apolarity.
In this article we work over an algebraically closed field $\bbK$ of
characteristic zero.
\subsection{Apolarity} \noindent Let $V$ be an arbitrary $(n+1)$-dimensional vector space over $\bbK$.
Let
\[
\calRgl\coloneq S(V),\qquad \calDgl\coloneq S(V^*)
\]
denote the symmetric algebra of $V$ and its dual, respectively.
 {Some zero-dimensional projective schemes below will live in in $\mathbb{P}(V) = \Proj S(V^*) = \Proj \calDgl$.}
Let
\[ (\calRgl)_{d}\coloneq S^d V,\qquad(\calDgl)_{d}\coloneq S^d V^*\] denote the $d$-th symmetric powers of $V$ and $V^*$, respectively.
The ring $\calDgl$ acts on $\calRgl$ so that every $\alpha\in V^*$ acts as a
partial derivative.
If we fix the dual bases $\{x_0,\dots,x_n\}$ and $\{\alpha_0,\dots,\alpha_n\}$ of $V$ and $V^*$ respectively, we can identify their symmetric algebras as
\[
\calRgl=\bbK[x_{0},\dots,x_{n}],\qquad \calDgl=\bbK[\alpha_{0},\dots,\alpha_{n}].
\]
{For such fixed bases, we also consider}
\[
\calR=\bbK[x_{1},\dots,x_{n}],\qquad \calD=\bbK[\alpha_{1},\dots,\alpha_{n}].
\]
{The ring $\calD$ can be identified with the ring of functions on the affine patch $(x_0\neq 0) \subset \mathbb{P}(V)$.}
We denote the monomials in $\calRgl$ and $\calDgl$ by
\[
\bfx^{\bfm}\coloneq x_0^{m_0}\cdots x_n^{m_n},\qquad \bfalpha^{\bfm}\coloneq\alpha_0^{m_0}\cdots \alpha_n^{m_n},
\]
for any multi-index $\bfm=(m_0,\dots,m_n)\in\bbN^{n+1}$. For such an $\bfm$, {we denote}
$\bfm!\coloneq m_0!\cdots m_n!$.
The polarization map is defined for monomials $\bfalpha^{\bfk}\in\calDgl$ and $\bfx^{\bfm}\in\calRgl$ as
\[
\bfalpha^{\bfk}\circ\bfx^{\bfm}\coloneq
\begin{cases}
\dfrac{\bfm!}{(\bfm-\bfk)!}\bfx^{\bfm-\bfk}\quad &\text{if $\bfm-\bfk\geq 0$},\\[2ex]
0\quad &\text{otherwise}.
\end{cases}
\]
\begin{defn}
\label{defn:apolarity_action}
The \textit{apolarity action} of $\calDgl$ on $\calRgl$ is defined by linearly extending
the polarization maps for each component of $\calDgl$ and $\calRgl$. It is denoted by $\circ\colon \calDgl\times\calRgl\to\calRgl$, and gives $(\sigma, f)\mapsto
\sigma\circ f$.
\end{defn}
By fixing a polynomial $f\in\calRgl$, it is possible to define a function that describes the apolarity action of the space $\calDgl$ on $f$.
\begin{defn}
For any $f\in\calRgl$, the \textit{catalecticant map} of $f$ is defined as the linear map
\begin{equation}\label{eq:catalecticant}
\begin{tikzcd}[row sep=0pt,column sep=1pc]
 \Cat_f\colon \calDgl\arrow{r} & \calRgl\hphantom{.} \\
  {\hphantom{\Cat_f\colon{}}} \sigma \arrow[mapsto]{r} & \sigma\circ f.
\end{tikzcd}
\end{equation}
The \textit{annihilator} (or \textit{apolar ideal}) of $f$ is the kernel of $\Cat_f$, that is, the set
\[
\Ann(f)\coloneq\Set{\sigma\in\calDgl|\sigma\circ f=0}.
\]
\end{defn}
The annihilator of a polynomial $f$ was called \textit{principal system} by F.~S. Macaulay and its quotient
\[
\Ap(f)\coloneq\frac{{\calDgl}}{\Ann(f)}
\]
is also called the \textit{apolar algebra} of $f$.
The apolar algebra was introduced in \cite{Mac94} by
F.~S.~Macaulay, see also \cite{IK99}*{p.~XX} and~\cite{Dol12}*{p.~75}.
The above definition immediately extends to an arbitrary number of polynomials.
\begin{defn}
Let $f_1,\dots,f_r\in\calRgl$. The \textit{annihilator} of $f_1,\dots,f_r$ is the ideal
\[
\Ann(f_1,\dots,f_r)\coloneq\bigcap_{i=1}^r\Ann(f_i).
\]
The \textit{apolar algebra} of $f_1,\dots,f_r$ is the quotient space
\[
\Ap(f_1,\dots,f_r)\coloneq\frac{{\calDgl}}{\Ann(f_1,\dots,f_r)}.
\]
\end{defn}
In the particular case of a single polynomial $f\in\calRgl$, we have the
isomorphism of $\calDgl$-modules
\begin{equation}\label{eq:apolarIso}
\calDgl\circ f\simeq\Ap(f).
\end{equation}

A polynomial {$f\in \calRgl$} is \emph{concise} if it
cannot be written, possibly after a change of coordinates, using less than $n$ variables. Equivalently, the polynomial $f$ is concise if \[\Ann(f)\cap
{\calDgl}_{\leq 1} = \{0\}.\]

{The discussion above can be repeated for the subrings $\calR$, $\calD$: the apolarity action restricts from $\calDgl$, $\calRgl$ to $\calD$, $\calR$, for a polynomial $f\in\calR$ we obtain its apolar algebra $\Ap(f) = \calD/\Ann(f)$ etc.}

\subsection{Ranks of tensors}
In this section we recall several notions of rank that will be useful below.
A crucial tool for providing upper bounds for the rank is the classical
apolarity lemma.
\begin{lem}[Apolarity lemma]
\label{lem:apolarity_lemma_schemes}
Let ${F}\in (\calRgl)_{d}$
be a homogeneous {polynomial} and $Z\subseteq\bbP^n$ be a
finite scheme. Let
\[
\nu_d\colon\bbP\bigl((\calRgl)_{1}\bigr)\to\bbP\bigl((\calRgl)_{d}\bigr)
\]
be the $d$-Veronese map. The following conditions are equivalent:
\begin{enumerate}[label=(\arabic*), left= 3pt, widest=2,nosep]
\item $[{F}]\in\langle\nu_d(Z)\rangle$;
\item $I(Z)\subseteq \Ann({F})$.
\end{enumerate}
If they hold, we say that \emph{$Z$ is apolar to ${F}$}.
\end{lem}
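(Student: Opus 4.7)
The plan is to translate $[f]\in\langle \nu_d(Z)\rangle$ into a statement about $I(Z)_d$ and then bootstrap to the full ideal by using that $f$ is homogeneous. Throughout I use the perfect pairing $(\calDgl)_e \times (\calRgl)_e \to \bbK$, $(\sigma,g)\mapsto \sigma\circ g$, which makes $(\calRgl)_e$ and $(\calDgl)_e$ canonically dual in every degree. The first step is the geometric identification
\[
\langle \nu_d(Z)\rangle \;=\; \bbP\bigl(I(Z)_d^{\perp}\bigr) \subseteq \bbP\bigl((\calRgl)_d\bigr),
\]
where the perpendicular is taken inside $(\calRgl)_d$. This rests on the fact that $\nu_d$ is a closed embedding defined by the complete linear system $|\mathcal{O}_{\bbP(V)}(d)|$, so the pullback of linear forms $(\calDgl)_d = (\calRgl)_d^{*} \to \bbK[\bbP(V)]_d$ is an isomorphism. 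Concretely, for a closed point $\nu_d([v]) = [v^d]$ and $\sigma \in (\calDgl)_d$, a direct monomial computation gives $\sigma\circ v^d = d!\,\sigma(v)$, so $\sigma$ vanishes at $\nu_d([v])$ iff $\sigma \in I(\{[v]\})_d$; the same identification extends scheme-theoretically to arbitrary finite $Z$. Consequently, $[f]\in\langle\nu_d(Z)\rangle$ if and only if $I(Z)_d \subseteq \Ann(f)$.

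It remains to upgrade $I(Z)_d \subseteq \Ann(f)$ to $I(Z) \subseteq \Ann(f)$; the reverse implication is trivial. Take any homogeneous $\sigma \in I(Z)_k$. If $k > d$ then $\sigma \circ f = 0$ purely for degree reasons. If $k \leq d$ then for every $\tau \in (\calDgl)_{d-k}$ the product $\tau\sigma$ lies in $I(Z)_d \subseteq \Ann(f)$, so
\[
\tau \circ (\sigma\circ f) \;=\; (\tau\sigma)\circ f \;=\; 0.
\]
Since $\sigma\circ f \in (\calRgl)_{d-k}$ and the pairing $(\calDgl)_{d-k}\times (\calRgl)_{d-k}\to \bbK$ is perfect, this forces $\sigma\circ f = 0$. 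Hence every homogeneous component of $I(Z)$ lies in $\Ann(f)$, which finishes the equivalence.

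The main delicate point is the first step, namely the scheme-theoretic identification of $\langle\nu_d(Z)\rangle$ with the zero locus of $I(Z)_d$. At the level of reduced points this is the elementary computation $\sigma\circ v^d = d!\,\sigma(v)$, but promoting it to arbitrary finite subschemes requires invoking that $\nu_d^{*}\mathcal{O}(1)=\mathcal{O}_{\bbP(V)}(d)$ and that the restriction $(\calDgl)_d \to H^0(Z,\mathcal{O}_Z(d))$ has kernel exactly $I(Z)_d$. Everything after that is a formal manipulation of the apolarity pairing, using only the ideal property of $I(Z)$, homogeneity of $f$, and perfectness of the pairing in each degree.
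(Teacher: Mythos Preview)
The paper does not prove this lemma; it is stated as the classical Apolarity Lemma and used without proof, as is standard in the literature (see, e.g., \cite{IK99}*{Lemma~1.15} or \cite{BB14}). Your argument is correct and is precisely the standard proof: the identification $\langle\nu_d(Z)\rangle = \bbP\bigl(I(Z)_d^{\perp}\bigr)$ via $\nu_d^*\calO(1)=\calO_{\bbP^n}(d)$ reduces condition~(1) to $I(Z)_d\subseteq\Ann(f)$, and the bootstrap to all degrees using the ideal property of $I(Z)$ and the perfectness of the apolarity pairing is exactly how one closes the argument. There is nothing to compare against in the paper itself.
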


The lemma gives rise to three notions of rank,
and also gives an alternative but equivalent definition of Waring rank.
\begin{defn}
    The \emph{cactus rank} of $F$ is the minimum degree of a scheme $Z$ apolar to
    $F$. The \emph{smoothable rank} of $F$ is the {minimal} degree of such a
    $Z$, where $Z$ is smoothable (see~\S\ref{sec:smoothability} below).
    The \emph{Waring rank} of $F$ is the minimum degree of such a $Z$, where $Z$ is a tuple of points.
\end{defn}
The cactus rank was introduced under the name of \textit{scheme length} by A.~Iarrobino and V.~Kanev (see \cite{IK99}*{Definition 5.1}).
The term \textit{cactus rank} was first used by K.~Ranestad and F.-O.~Schreyer
in \cite{RS11}, inspired by the notion of \textit{cactus variety}, introduced
by W.~Buczy\'{n}ska and J.~Buczy\'{n}ski in \cite{BB14}. The cactus rank was analyzed in several other papers, such as  \cites{Bal18, BR13, BBG19}.
The smoothable rank was also first introduced in \cite{RS11},
motivated by several results appearing in \cites{BGI11, BB14,
BGL13}. For a homogeneous polynomial ${F}$, we denote its smoothable rank by
$\smrk {F}$ and its cactus rank by $\crk {F}$.
For any ${F}$ we have inequalities
\[
    \crk {F} \leq \smrk {F}\leq \rk {F},
\]
which can be strict in general, see, for example,~\cites{BB15, huang2020vanishing}.

\subsection{Tautological schemes}\label{ssec:tautological}
\noindent In general, the construction of apolar schemes for a given form is a subtle problem.
There are several papers in the literature that treat these objects, such as
\cites{BB14,Chi06,GRV18,RS11}.

A clear and general procedure is provided by A.~Bernardi and K.~Ranestad
in~\cite{BR13} and later in \cite{BJMR18}*{section 4}. This strategy allows to relate homogeneous polynomials to their dehomogenizations.

Before proceeding further, we warn the reader about one known caveat: Theorem~3
in~\cite{BR13} is {incorrect} as stated, see corrigendum~\cite{BR24}. A counterexample is \autoref{ex:counterexampleBR}.
The source of the problem is the {lack} of constants in the last line of the proof of~\cite{BR13}*{Lemma~2},
see~\eqref{eq:correctedBRequation} below. As far
as we know, the results of~\cite{BJMR18}*{section~4} are fine.
However,~\cite{BJMR18} uses
the contraction action instead of differentiation
(see~\cite{IK99}*{Appendix~A} for a comparison of the two actions). The whole
problem may seem notational and negligible, but it is unavoidable and led us
to the introduction of twists above.
\autoref{ex:bigSquareForEncompassing} below shows that they are
necessary.

In the current article, we do not introduce the contraction action directly.
Rather, we only introduce the necessary notation and refer the reader
to~\cite{BJMR18} and~\cite{IK99}*{Appendix~A} for details.

The main result we use is the following proposition.
\begin{prop}[\cite{BJMR18}*{Corollary 4} or corrected \cite{BR13}*{Lemma~2}]
\label{prop:BJMR18_Corollary_4}
Let $F\in\calRgl = \bbK[x_0, \ldots ,x_n]$ be an arbitrary form and let
\[
    Z_{F,x_0} := \Spec \Ap(F|_{x_0=1}) \subseteq (x_0\neq 0) \subseteq \mathbb{P}^n.
\]
Then the
scheme $Z_{F,x_0}$ is apolar
to $\twist{F}$.
\end{prop}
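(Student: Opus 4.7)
The plan is to apply the apolarity lemma (\autoref{lem:apolarity_lemma_schemes}), which reduces the claim to the ideal inclusion $I(Z_{F,x_0}) \subseteq \Ann(\twist{F})$. Set $d := \deg F$ and $f := F|_{x_0=1}$. Since $Z_{F,x_0} = \Spec \Ap(f)$ is a finite scheme contained in the chart $(x_0 \ne 0)$, it is closed in $\bbP^n$ and its homogeneous ideal is the $\alpha_0$-homogenization of $\Ann(f)$; explicitly, every homogeneous degree-$e$ element of $I(Z_{F,x_0})$ has the form
$\sigma^{h,e} := \sum_{\bfk} c_{\bfk}\, \alpha_0^{e-|\bfk|} \bfalpha^{\bfk}$
for some $\sigma = \sum_{\bfk} c_{\bfk} \bfalpha^{\bfk} \in \Ann(f)$ with $\deg \sigma \leq e$. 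It therefore suffices to show that $\sigma^{h,e} \circ \twist{F} = 0$ for every such $\sigma$ and every $e \ge \deg\sigma$.

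The core of the argument is the identity
\begin{equation}\label{eq:planKey}
\sigma^{h,e} \circ \twist{F} \;=\; \sum_{p=0}^{d-e} \frac{x_0^p}{p!}\,(\sigma \circ f)_{d-e-p},
\end{equation}
valid for every $\sigma \in \bbK[\alpha_1, \ldots, \alpha_n]$ with $\deg \sigma \le e$, where $(\sigma \circ f)_j$ denotes the homogeneous degree-$j$ part in $x_1, \ldots, x_n$ (with the convention that the sum on the right is empty, hence zero, when $e > d$). I would establish \eqref{eq:planKey} by a direct expansion on monomials: writing $F = \sum_{\bfi} \lambda_{\bfi} \bfx^{\bfi}$, the computation of $\alpha_0^{e-|\bfk|} \bfalpha^{\bfk} \circ (\lambda_{\bfi} \bfx^{\bfi}/i_0!)$ produces a factor $i_0!/(i_0-(e-|\bfk|))!$ from the apolar action of $\alpha_0^{e-|\bfk|}$ on $x_0^{i_0}$, which is exactly neutralized by the $1/i_0!$ built into $\twist{F}$, leaving the clean weight $1/p!$ on $x_0^p$ where $p := i_0 - (e-|\bfk|)$. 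After substituting $\bfm = \bfi' - \bfk$ in the remaining variables and regrouping the result by $p$, the surviving inner sum coincides literally with the coefficient expansion of $(\sigma \circ f)_{d-e-p}$.

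Once \eqref{eq:planKey} is proved, the proposition is immediate: if $\sigma \in \Ann(f)$, then every component $(\sigma \circ f)_j$ vanishes, so $\sigma^{h,e} \circ \twist{F} = 0$, i.e.\ $\sigma^{h,e} \in \Ann(\twist{F})$; letting $e$ range yields $I(Z_{F,x_0}) \subseteq \Ann(\twist{F})$, and the apolarity lemma concludes. The main obstacle is precisely the bookkeeping in \eqref{eq:planKey}: as \autoref{ex:counterexampleBR} and the corrigendum \cite{BR24} demonstrate, dropping the factorial weights $1/i_0!$ in the definition of $\twist{F}$ genuinely breaks this identity, so the derivation must carefully track which factorials cancel against the apolar action of $\alpha_0$ and which survive in the output---this is the sole reason the twist was introduced.
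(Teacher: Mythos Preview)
Your proposal is correct and follows essentially the same route as the paper: both reduce to showing that a homogenized annihilator element $G$ kills $\twist{F}$, then compute $G\circ\twist{F}$ directly and observe that the $1/i_0!$ in the twist cancels the factorial produced by $\alpha_0^{\bullet}\circ x_0^{\bullet}$, leaving an expression built from the graded pieces of $\sigma\circ f$. Your identity~\eqref{eq:planKey} is a mild generalization of the paper's displayed computation~\eqref{eq:correctedBRequation}: the paper effectively only records the case $e=d$ (so that $G\circ\twist{F}$ is a constant and only the $p=0$ term survives), which already suffices since $I(Z)_d\subseteq\Ann(\twist{F})_d$ forces the full ideal inclusion; you instead carry out the computation for arbitrary $e\le d$, which is slightly more bookkeeping but makes the structure of the identity more transparent.
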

\begin{proof}
    We follow the notation of~\cite{BR13}*{Lemma~2}.
    Let $d$ be the degree of $F$. Take a homogeneous
        $G\in\calDgl$ such
    that its dehomogenization $g = G|_{\alpha_0=1}$ is apolar to $f=F|_{x_0=1}$.
    If the degree of $G$ is higher than $d$, then
the apolarity is clear. Otherwise,
    write \[ g = g_d + \cdots + g_0,\qquad F|_{x_0 = 1} = f_d + \cdots +
    f_0,\] where $f_i$ and $g_i$ are homogeneous of degree $i$. The apolarity reduces to \[\sum_j g_j\circ f_{j+e} = 0\] for every $e$.    We compute
    \begin{equation}\label{eq:correctedBRequation}
        G\circ \twist{F} = \sum_{e}\sum_j \frac{1}{(d-(e+j))!}(\alpha_0^{d-j} \circ
        x_0^{d-(e+j)}) g_j\circ f_{e+j} = \sum_e \frac{1}{e!} x_0^e \sum_j
        g_j\circ f_{e+j} = 0.\qedhere
    \end{equation}
\end{proof}

\subsection{Smoothability}\label{sec:smoothability}
\noindent \autoref{lem:apolarity_lemma_schemes} and \autoref{prop:BJMR18_Corollary_4}
are powerful results that can be useful in determining upper bounds. To use them to determine smoothable rank, it is necessary to discuss the notion of smoothability.
We recall that the \textit{Hilbert scheme} $\Hilb_r(\bbA^n)$ of $r$ points in the
affine space $\bbA^n$ is a scheme {parameterizing} all finite
subschemes of $\bbA^n$ of degree $r$. It was first introduced by
A.~Grothendieck in \cite{Gro61} and it is a very important object in algebraic geometry, widely used in the literature (see, e.g., \cites{Str96, MS05, Ber12}
for introductions).
Denoting by $\Hilb_r^0(\bbA^n)$ the open subset of  $\Hilb_r(\bbA^n)$ consisting of the $r$-tuples of distinct points in $\bbA^n$, we say that a scheme $Z\subseteq\bbA^n$ is \textit{smoothable} if it is contained in the closure of $\Hilb_r^0(\bbA^n)$.
The smoothability does not depend on the embedding of $Z$, only on $Z$ itself.
Moreover, a finite scheme $Z$ is smoothable if and only if all of its irreducible
components are smoothable~(see \cite{BJ17}*{Theorem~1.1}).
For more details on smoothability, the reader may consult \cites{IK99, BJ17, CEVV09,JKK19}.

\newcommand{\powerseries}{\bbK[\![t]\!]}

Let $\calD \coloneq \bbK[\alpha_{1},\dots,\alpha_{n}]$ be the coordinate ring
of $\mathbb{A}^n$.
Our interest focuses on the smoothability of symmetric powers of
algebras (see~\S\ref{sec:smoothableRank}), and we will see that it is cumbersome to consider them as
quotients of $\calD$. Thus, it is also important to consider smoothability
from the point of view of non-embedded $\bbK$-algebras.
\begin{defn}
\label{def:smoothable_algebra}
A {finite} $\bbK$-algebra $A$ is said to be \textit{smoothable} if there
exists a homomorphism $\powerseries\to \calA$, where $\powerseries$ is the
formal power series ring in $t$ and $\calA$ is a $\powerseries$-algebra
such that the \emph{special fiber} $\calA/t\calA$ is isomorphic to $A$ as an
$\bbK$-algebra,
the \emph{generic fiber} $\calA[t^{-1}]$ is a smooth
$\powerseries[t^{-1}]$-algebra, and $\calA$ is a free $\powerseries$-module.
\end{defn}
The last two definitions are equivalent thanks to the following result.
\begin{lem}[\cite{CEVV09}*{Lemma 4.1}]
\label{lem:equivalence_smoothability}
Let $I\subseteq\calD$ be an ideal such that $\dim_{\bbK} \calD/I =
r$. Then
$\Spec(\calD/I)$ is in the
smoothable component of $\Hilb_r(\bbA^n)$ if and only if $\calD/I$ is a smoothable
$\bbK$-algebra in the sense of \autoref{def:smoothable_algebra}.
\end{lem}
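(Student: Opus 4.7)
The plan is to prove both directions of the equivalence by translating between the embedded viewpoint (points of the Hilbert scheme) and the abstract viewpoint (families of $\bbK$-algebras over $\powerseries$).

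For the forward direction, suppose $[\Spec(\calD/I)]$ lies in the smoothable component of the Hilbert scheme. Since the smoothable component is irreducible and the locus of reduced $r$-tuples is open and dense in it, the local ring of the Hilbert scheme at $[\Spec(\calD/I)]$ contains a curve germ whose generic point maps into the open locus. Concretely, I would take a morphism $\Spec\powerseries \to \Hilb_r(\bbA^n)$ sending the closed point to $[\Spec(\calD/I)]$ and the generic point into the smooth étale locus. Pulling back the universal family yields a closed subscheme $\calZ \subseteq \bbA^n_{\powerseries}$, flat and finite of degree $r$ over $\Spec\powerseries$. Set $\calA \coloneq \Gamma(\calZ,\calO_\calZ)$; then $\calA$ is a finitely generated torsion-free (hence free, since $\powerseries$ is a DVR) module of rank $r$ over $\powerseries$, with special fiber $\calD/I$ and smooth generic fiber, satisfying Definition~\ref{def:smoothable_algebra}.

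For the reverse direction, suppose $\calA$ is a smoothing of $\calD/I$ in the abstract sense. The goal is to realize $\calA$ as the coordinate ring of an embedded flat family in $\bbA^n_{\powerseries}$ specializing to $\Spec(\calD/I)$. Let $\pi\colon \calD \surjects \calD/I$ be the quotient and choose arbitrary $\powerseries$-linear lifts $\tilde{a}_i \in \calA$ of the classes $\pi(\alpha_i) \in \calD/I \simeq \calA/t\calA$, for $i=1,\ldots,n$. These lifts determine a $\powerseries$-algebra homomorphism $\varphi\colon \calD \otimes_{\bbK} \powerseries \to \calA$ sending $\alpha_i \mapsto \tilde{a}_i$. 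Since $\varphi$ is surjective modulo $t$ and both sides are finitely generated $\powerseries$-modules with $\calA$ free of rank $r$, the Nakayama lemma gives surjectivity of $\varphi$. Setting $\tilde{I} \coloneq \ker(\varphi)$, the quotient $(\calD \otimes \powerseries)/\tilde{I} \simeq \calA$ is free, hence flat, over $\powerseries$. Thus $\Spec\calA \hookrightarrow \bbA^n_{\powerseries}$ is a flat family of degree $r$ defining a morphism $\Spec\powerseries \to \Hilb_r(\bbA^n)$ whose closed point is $[\Spec(\calD/I)]$.

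It remains to check that the generic fiber of this family lands in the smoothable component. Because $\calA[t^{-1}]$ is smooth and finite over $\powerseries[t^{-1}]$, after base change to an algebraic closure of $\Frac(\powerseries)$ it becomes a product of copies of the base field, i.e.~a disjoint union of reduced points in $\bbA^n$. Consequently the generic point of $\Spec\powerseries$ maps into $\Hilb_r^0(\bbA^n)$, so the closed point lies in its closure, which is the smoothable component by definition.

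The main obstacle is the reverse direction: one must verify that an \emph{abstract} smoothing can always be embedded compatibly with the given embedding of the special fiber. The key point making this work is that the lifts $\tilde{a}_i$ can be chosen freely as elements of the free $\powerseries$-module $\calA$, and Nakayama together with freeness converts the set-theoretic surjectivity modulo $t$ into the desired flatness of the embedded family. The fact that smoothness is preserved under the specific embedding (as opposed to some other one) is automatic because smoothness of $\calA[t^{-1}]$ is intrinsic to the algebra and independent of how it is presented.
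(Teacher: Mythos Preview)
Your proof is correct and complete in its essential ideas. Note, however, that the paper does not actually prove this lemma: it is quoted verbatim from \cite{CEVV09}*{Lemma 4.1} without argument, so there is no in-paper proof to compare against. Your approach coincides with the standard one found in that reference.

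Two minor remarks on presentation. In the forward direction, the existence of the morphism $\Spec\powerseries \to \Hilb_r(\bbA^n)$ hitting both the given point and the open locus $\Hilb_r^0$ deserves one more sentence: since $\bbK$ is algebraically closed, cut the smoothable component by generic hyperplanes to a curve through the $\bbK$-point, normalize, and complete at a preimage; Cohen's structure theorem then identifies the completed local ring with $\bbK[\![t]\!]$. In the reverse direction, the passage to an algebraic closure of the fraction field is unnecessary: the open locus $\Hilb_r^0$ is precisely the locus where the universal family is \'etale over the base, and your hypothesis that $\calA[t^{-1}]$ is smooth over $\powerseries[t^{-1}]$ already places the generic point there without any base change.
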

For any scheme $Z = \Spec(\calD/I)\subseteq\bbA^n$, much is known about smoothability of $Z$ for small values of
$\deg(Z) = \dim_{\bbK} \calD/I$. Indeed, all schemes $Z$ such that $\deg(Z)\leq 7$ are
smoothable (\cite{CEVV09}) and all Gorenstein schemes $Z$ such that $\deg(Z)\leq 13$ are
smoothable (\cite{CJN13}). However, when $\deg(Z)$ is larger the situation is
much more complicated. The known classes
of smoothable $Z$ are:
\begin{enumerate}[label=(\arabic*), left= 3pt, widest=2,nosep]
    \item $Z \subseteq \mathbb{A}^2$
    \item $Z \subseteq \mathbb{A}^3$ Gorenstein;
    \item $Z$ given by a locally complete intersection or, more generally,
        linked to a smoothable scheme;
    \item $Z$ given by a monomial ideal in an $\mathbb{A}^n$.
\end{enumerate}
Disjoint union and product of smoothable schemes are smoothable. While there
are many other constructions that give specific examples of smoothable
schemes, it seems that the above are the only general ones known. In
\S\ref{sec:smoothabilityandbrank}
 we show that being able to decide {whether} explicit, simple algebras are smoothable or not, would provide upper and lower bounds on tensor border rank that are much better than state-of-the-art techniques.

\subsection{Secant varieties and border rank}
\noindent When dealing with decompositions of homogeneous polynomials, there
is another quantity {to consider}, namely the
\textit{border rank} of a form.
The notion of border rank of an arbitrary tensor was {first introduced} in 1979 by D.~Bini, M.~Capovani, G.~Lotti, and F.~Romani in \cite{BCRL79}, and then named that way in the following year by D.~Bini, G.~Lotti, and F.~Romani in \cite{BLR80}, where the authors defined it as the minimum number
of decomposable tensors required to approximate a tensor with an arbitrarily
small error.
The analog for homogeneous polynomials is defined in the same way.
For a homogeneous polynomial ${F}$,
the \textit{border rank} of ${F}$, denoted by $\brk {F}$, is the smallest natural number $r$ such that ${F}$
is in the closure of a set of polynomials that have rank $\leq r$. The closure is taken in the Zariski topology. For $\bbK
= \bbC$ the closure {coincides} with the closure in the Euclidean topology.

Determining the border rank of a \emph{given} {homogeneous} polynomial is in general a very difficult problem.
However, again, much information is known for small values of $n$ and $d$
(see, e.g., \cites{LO13, CGLV22, GL19, LMR23, Fla23, GMR23}).
{A typical example illustrating} the difference between rank and border rank is given by a monomial of the type $x_1^{d-1}x_2$, for any $d\in\bbN$. {In fact, its rank is equal to} $d$, but its border rank is $2$, as we can express it as
\[
    x_1^{d-1}x_2=\frac{1}{d}\cdot \lim_{t\to 0}\frac{1}{t}\bigl((x_1+tx_2)^d-x_1^d\bigr).
\]

{The \textit{catalecticant rank} is a known} lower bound for the border
rank of a form ${F}\in\calRgl_d$, historically attributed to J.~J.~Sylvester
{see \cite{Syl51}}.
It states that for every $k\in\bbN$ we have
\begin{equation}\label{rel:lower_bound_border_rank}
\brk {F}\geq \rk(\Cat_{{F}})_k,
\end{equation}
where $(\Cat_{{F}})_k$ is the $k$-th catalecticant matrix,
see~\eqref{eq:catalecticant}.

The catalecticant {lower} bound, usually taken for $k\simeq d/2$, is very
{powerful} when $d$ is large with respect to $r$. For example, it gives
{set-theoretic}
equations of the $r$-th secant variety $\sigma_{r}\bigl( \nu_d(\bbP^n) \bigr)$ when $d\geq 2r$ and
$r\leq 13$, see \cite{BB14}*{Theorem~1.1 and \S8.1}.
When $r$ is large with respect to $d$, the bound is very weak
and there are few explicit classes of $f$ for which equality holds: mostly
additive decompositions, such as \[ {F}\coloneqq  x_1^d +  \ldots + x_n^d, \] and more generally forms in
disjoint sets of variables, such as tangential generalized additive
decompositions of the type \[ {F}\coloneqq  x_1^{d-1}x_{2} + x_{3}^{d-1}x_4 +  \ldots +
x_{2n-1}^{d-1}x_{2n},\]
or yet more generally the GADs, see, e.g.,~\cite{BOT23}. {All the examples known to the authors} have $\max_i\rk (\Cat_{{F}})_{i}$ equal to $n$ or slightly
higher.

There are several examples in the literature concerning the analysis of the
rank of the catalecticant matrices of certain homogeneous
polynomials (see \cite{BBM14}*{Example 2.21} for the case of monomials, see
also~\cite{IK99}*{p.~198}). A
special case concerns the powers of quadratic forms. Given any quadratic form
$q_n$ of rank equal to $n$, B.~Reznick proves that
for every integer $d\geq 1$ the catalecticant matrices of $q_n^d$ are all {of} full rank.
\begin{teo}[{\cite{Rez92}*{Theorem 8.15} using \cite{Rez92}*{Theorem~3.7 and Theorem~3.16}, see also \cite{GL19}*{Theorem 2.2}, \cite{Fla23}*{Proposition 3.2}}]
\label{teo:lower_bound_border_rank_q_n^d}
For a quadratic form $q_n\in(\calRgl)_2$ of rank $n$ and for any $d\geq 1$ we have
\[
\brk(q_n^d)\geq\binom{n+d-1}{d}.
\]
\end{teo}

A notable fact is that the border rank and the smoothable rank can be compared. Indeed, as observed by A.~Bernardi, J.~Brachat, and B.~Mourrain in \cite{BBM14}*{Remark 2.7}, using \cite{IK99}*{Lemma 5.17}, we have
\begin{equation}
\label{rel_border_rank_lower_smoothable_rank}
\brk {F}\leq \smrk {F}.
\end{equation}
There are cases where the equality in formula
\eqref{rel_border_rank_lower_smoothable_rank} does not hold. Several examples
of polynomials with border rank strictly {lower} than the smoothable rank are
given by W.~Buczy\'{n}ska and J.~Buczy\'{n}ski in \cite{BB15}.

\subsection{Lowest and top degree forms}
\noindent Some of the proofs below make use of the theory of {top and lowest degree} forms, which we briefly review here (see~\cite{Iar94}*{\S1} for details).
Write a polynomial $f\in\calR$ as \[ f = f_e + f_{e+1}+ \ldots +f_d,\]
for some $e\in\bbN$,
where
$f_i\in \calR$ is homogeneous of degree $i$ for $e\leq i\leq d$ and $f_e, f_d\neq
0$. Then the \emph{lowest degree
form} and the \emph{top degree form} of $f$ are respectively the homogeneous polynomials
\[\ldf(f) \coloneq f_e,\qquad \tdf(f) \coloneq f_d.\]
In general none of these is a monomial. Analogous definitions can be made for any polynomial
$\sigma\in \calD$.

For a $\calD$-submodule $M\subseteq \calR$, we define the
\textit{space of the lowest degree forms} and the \textit{space of the top degree forms} as the $\bbK$-linear spaces given by
\[
\ldf(M)\coloneq\langle\Set{\ldf(f)|f\in M}\rangle,\qquad \tdf(M)\coloneq\langle\Set{\tdf(f)|f\in M}\rangle,
\]
respectively.
Then
$\ldf(M)$ and $\tdf(M)$ are also $\calD$-submodules. In particular,
for any ideal $I\subseteq \calD$ such that $\calD_{\geq r}\subseteq I$ for $r{\gg 0}$, {we can define $\tdf(I)$, which is the $\bbK$-linear space spanned by the set
\[\Set{\tdf(\sigma) |\sigma\in I}.\]
It is an ideal.}

\begin{lem}
\label{lem:ann_ldf(M)=tdf(I)}
Let $M$ be a finitely generated $\calD$-submodule of $\calR$ and let $I=\Ann(M)$.
Then \[\Ann\bigl(\ldf(M)\bigr)=\tdf(I).\]
\end{lem}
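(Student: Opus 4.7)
The plan is to establish the inclusion $\tdf(I)\subseteq\Ann\bigl(\ldf(M)\bigr)$ by direct computation and then obtain the reverse inclusion via a dimension count based on Macaulay inverse system duality. One first checks that $\tdf(I)$ is indeed an ideal: for homogeneous $\rho\in\calD$ and nonzero $\sigma\in I$, the polynomial ring $\calD$ is a domain, so $\tdf(\rho\sigma)=\rho\cdot\tdf(\sigma)\in\tdf(I)$, and the general case follows by linearity.

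For the first inclusion, take $\sigma\in I$ and write $\sigma=\sigma_d+\sigma_{d-1}+\cdots+\sigma_0$ with $\sigma_d=\tdf(\sigma)\neq 0$. For any $f\in M$, write $f=f_e+f_{e+1}+\cdots+f_D$ with $f_e=\ldf(f)\neq 0$. Expanding $\sigma\circ f=\sum_{i,j}\sigma_i\circ f_j$, the homogeneous component of smallest possible degree $e-d$ arises only from the pair $(i,j)=(d,e)$ and equals $\sigma_d\circ f_e=\tdf(\sigma)\circ\ldf(f)$. Since $\sigma\circ f=0$, this component must vanish, so $\tdf(\sigma)\in\Ann\bigl(\ldf(M)\bigr)$.

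For the reverse inclusion, I will show that $\dim_{\bbK}\calD/\tdf(I)=\dim_{\bbK}\calD/\Ann\bigl(\ldf(M)\bigr)$; combined with the inclusion just established, this forces equality. Since $M$ is finitely generated by polynomials and each polynomial is killed by all sufficiently high-order derivatives, $M$ is finite-dimensional over $\bbK$ and $\calD_{\geq N}\subseteq I$ for some $N$. The telescoping identity
\[
\dim_{\bbK}\bigl(\calD/\tdf(I)\bigr)_d=\dim_{\bbK}(\calD/I)_{\leq d}-\dim_{\bbK}(\calD/I)_{\leq d-1}
\]
sums over $d$ to give $\dim_{\bbK}\calD/\tdf(I)=\dim_{\bbK}\calD/I$; analogously, filtering $M$ by order yields $\dim_{\bbK}\ldf(M)=\dim_{\bbK}M$. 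Macaulay inverse systems then provide a bijection between finite-codimensional ideals $J\subseteq\calD$ and finite-dimensional $\calD$-submodules $N\subseteq\calR$ via $N\mapsto\Ann_{\calD}(N)$ and $J\mapsto J^{\perp}:=\{g\in\calR:J\circ g=0\}$, with $\dim_{\bbK}N=\dim_{\bbK}\calD/J$. Applied to the pairs $(I,M)$ and $\bigl(\Ann(\ldf M),\ldf M\bigr)$ this gives $\dim_{\bbK}\calD/I=\dim_{\bbK}M$ and $\dim_{\bbK}\calD/\Ann(\ldf M)=\dim_{\bbK}\ldf(M)$, and chaining the four equalities concludes.

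The main obstacle is the non-uniqueness of lifts: a direct iterative construction of $\tau=\rho+\tau_{d-1}+\cdots\in I$ with $\tdf(\tau)=\rho$, for a given $\rho\in\Ann(\ldf M)$, runs into trouble because two elements $f,g\in M$ sharing the same lowest form $\ldf(f)=\ldf(g)$ can have distinct next-order components, so the would-be correction equation $\tau_{d-1}\circ\ldf(f)=-\rho\circ f_{e+1}$ is not obviously realizable by a single differential operator of order $d-1$. The indirect dimension count via Macaulay duality sidesteps this issue by working entirely at the level of the associated graded.
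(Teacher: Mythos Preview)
Your proof is correct and follows essentially the same route as the paper: establish the inclusion $\tdf(I)\subseteq\Ann\bigl(\ldf(M)\bigr)$ directly, then upgrade to equality by a dimension count using Macaulay duality together with the fact that passing to $\tdf$ or $\ldf$ preserves dimension. Your write-up is more explicit (expanding the lowest-degree component of $\sigma\circ f$ and spelling out the telescoping identity), but the argument is the same as the paper's.
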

\begin{proof}
{The subspace} $\ldf(M)$ is a $\calD$-submodule of $\calR$. Since $I$ annihilates $M$, also {$\tdf(I)$ annihilates} $\ldf(M)$. Therefore, we must have the inclusion \[\tdf(I)\subseteq\Ann\bigl(\ldf(M)\bigr).\]
The spaces $M$ and $\calD/\Ann(M)$ are dual, so they have the same dimension. We
obtain
\[
\dim_{\bbK}\Bigl(\calD/\Ann\bigl(\ldf(M)\bigr)\Bigr)=\dim_{\bbK}\ldf(M)=\dim_{\bbK} M=\dim_{\bbK}(\calD/I)=\dim_{\bbK}\bigl(\calD/\tdf(I)\bigr) =d,
\]
so $\tdf(I)$ and $\Ann\bigl(\ldf(M)\bigr)$ have the same (finite!) {codimension, and
so the inclusion} is an equality.
\end{proof}

\subsection{Iarrobino's symmetric decomposition}
\newcommand{\mm}{\mathfrak{m}}%
\noindent In the following, in \S\ref{sec:almostEncompassing} we will need a bit
of theory of nonhomogeneous Gorenstein algebras, as developed by
Iarrobino~\cite{Iar94}*{chapter 2}.

\newcommand{\mubar}{\overline{\mu}}%
Let $f$ be a polynomial and
$A \coloneq \Ap(f)$. We have
\[ \calD_{>\deg(f)} \subseteq \Ann(f),\] so
$A$ is {a local ring} with {the} maximal ideal $\mm$ being the image of $(\calD)_{\geq
1}$. We have a bilinear
map $\mu\colon A\times A\to \bbK$ given by
\[
    \mu(a_1, a_2) \coloneq \bigl((a_1a_2)\circ f\bigr)_0,
\]
where $(-)_0$ denotes the constant part of a polynomial and the action $\circ$ is
the apolarity action given in \autoref{defn:apolarity_action}. The whole operation is {well defined} because $\Ann(f)$
by definition annihilates $f$, so $(-)\circ f$ descends to $\Ap(f)$. For every $\bbK$-subspace $L\subseteq A$ we define
\[ L^{\perp} \coloneq \Set{ a\in A | \mu(a, L) = 0 }.\]
We have the following properties:
\begin{enumerate}[label=(\arabic*), left= 3pt, widest=2,nosep]
    \item the pairing $\mu$ is perfect;
    \item for any $m$ we have $(\mm^m)^{\perp} = (0:\mm^{m})$;
    \item for any $m$ the pairing $\mu$ induces a perfect pairing
\begin{equation}
\label{formula:mubar_perfect_pairing}
\begin{tikzcd}[row sep=0pt,column sep=1pc]
 \mubar\colon \dfrac{\mm^m}{\mm^{m+1}} \times \dfrac{(0:\mm^{m+1})}{(0:\mm^m)}\arrow[r] & \bbK\hphantom{.}\\
  {\hphantom{\mubar\colon{}}} \bigl(a_1 + \mm^{m+1}, a_2 + (0:\mm^{m})\bigr) \ar[r,mapsto] & \mu(a_1,a_2).
\end{tikzcd}
\end{equation}
\end{enumerate}
The following lemma is all we will use in the sequel. It is very {simple} in the case of homogeneous polynomials, but {subtler} in the nonhomogeneous case.

\begin{lem}\label{ref:dualElement:lem}
    Let $f\in \calR$ be a polynomial, let $m\geq 0$ be fixed and let
    $\sigma_1, \ldots ,\sigma_{l}\in \calD_{\geq m} + \Ann(f)$ be elements
    such that no nonzero $\bbK$-linear combination of $\sigma_1, \ldots
    ,\sigma_l$ lies in $\calD_{\geq m+1} + \Ann(f)$.
    Then there exist $\tau_1, \ldots ,\tau_l\in \calD$ such that {the following conditions hold:}
    \begin{enumerate}[label=(\arabic*), left= 3pt, widest=2,nosep]
        \item\label{it:firstPerp} $\deg(\tau_j\circ f)\leq m$ for every $j$;
        \item\label{it:secondPerp} $(\tau_j\sigma_j)\circ f = 1$ for every $j$ and $(\tau_i
            \sigma_j)\circ f = 0$ for every $i\neq j$.
    \end{enumerate}
\end{lem}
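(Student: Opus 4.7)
The plan is to work inside the apolar algebra $A = \Ap(f)$ with maximal ideal $\mathfrak{m}$ (the image of $\calD_{\geq 1}$), and to construct the $\tau_j$ as lifts of a dual basis for the perfect pairing $\bar\mu$ from~\eqref{formula:mubar_perfect_pairing}. First, observe that since $\calD$ is a polynomial ring, $\calD_{\geq m} = (\calD_{\geq 1})^m$, so $\mathfrak{m}^m$ is precisely the image of $\calD_{\geq m}$ in $A$. The hypothesis on the $\sigma_j$ therefore says exactly that their images $\bar\sigma_1,\ldots,\bar\sigma_l$ lie in $\mathfrak{m}^m$ and are $\bbK$-linearly independent in $\mathfrak{m}^m/\mathfrak{m}^{m+1}$.

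Next I would invoke the perfect pairing
\[
\bar\mu\colon \mathfrak{m}^m/\mathfrak{m}^{m+1}\times (0:\mathfrak{m}^{m+1})/(0:\mathfrak{m}^m)\to\bbK
\]
to produce classes $[\tau_j]\in (0:\mathfrak{m}^{m+1})/(0:\mathfrak{m}^m)$ satisfying $\bar\mu(\bar\sigma_i,[\tau_j])=\delta_{ij}$, and lift them to elements $\tau_j\in\calD$ whose images in $A$ lie in $(0:\mathfrak{m}^{m+1})$. This last condition is equivalent to $(\eta\tau_j)\circ f = 0$ for every $\eta\in\calD_{\geq m+1}$, i.e., $\eta\circ(\tau_j\circ f)=0$ for all such $\eta$. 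Since no nonzero polynomial of degree $\geq m+1$ can be annihilated by every derivative of order $\geq m+1$, this forces $\deg(\tau_j\circ f)\leq m$, which is condition~\ref{it:firstPerp}.

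For condition~\ref{it:secondPerp}, I would write $\sigma_j = \sigma_j'+\eta_j$ with $\sigma_j'\in\calD_{\geq m}$ and $\eta_j\in\Ann(f)$. Since $\Ann(f)$ is an ideal, $(\tau_i\eta_j)\circ f=0$, so
\[
(\tau_i\sigma_j)\circ f = (\tau_i\sigma_j')\circ f = \sigma_j'\circ(\tau_i\circ f).
\]
Decomposing $\sigma_j' = \sum_{k\geq m}s_{j,k}$ into homogeneous pieces and using $\deg(\tau_i\circ f)\leq m$ from~\ref{it:firstPerp}, every piece with $k>m$ kills $\tau_i\circ f$, while $s_{j,m}\circ(\tau_i\circ f)$ lies in degree $\leq 0$ and is thus a constant. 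Consequently $(\tau_i\sigma_j)\circ f$ is a constant polynomial, and its scalar value equals its constant coefficient $((\sigma_j\tau_i)\circ f)_0 = \mu(\bar\sigma_j,\bar\tau_i) = \delta_{ij}$.

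The main subtlety is the last step: the perfect pairing~\eqref{formula:mubar_perfect_pairing} only delivers an equality of \emph{constant parts}, whereas~\ref{it:secondPerp} asks for an identity of polynomials in $\calR$. Upgrading one to the other is the reason for both the hypothesis $\sigma_j\in\calD_{\geq m}+\Ann(f)$ (which kills higher-degree contributions up to elements of $\Ann(f)$) and the degree bound~\ref{it:firstPerp} (which collapses the apolarity action to scalars). Everything else is a formal consequence of the nondegenerate pairing on the finite-dimensional Gorenstein algebra $A$.
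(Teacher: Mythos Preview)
Your proof is correct and follows essentially the same route as the paper: pass to $A=\Ap(f)$, use the perfect pairing $\bar\mu$ on $\mathfrak{m}^m/\mathfrak{m}^{m+1}\times (0:\mathfrak{m}^{m+1})/(0:\mathfrak{m}^m)$ to produce dual classes, lift to $\calD$, and deduce~\ref{it:firstPerp} from membership in $(0:\mathfrak{m}^{m+1})$. The only cosmetic difference is in the verification of~\ref{it:secondPerp}: the paper argues abstractly that $\bar\sigma_i\bar\tau_j\in\mathfrak{m}^m\cdot(0:\mathfrak{m}^{m+1})\subseteq(0:\mathfrak{m})$, hence $(\sigma_i\tau_j)\circ f$ is a constant, whereas you split $\sigma_j=\sigma_j'+\eta_j$ and count degrees of homogeneous pieces of $\sigma_j'$ acting on $\tau_i\circ f$; both arguments reach the same conclusion with the same effort.
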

\begin{proof}
    \def\sigmabar{\overline{\sigma}}%
    \def\taubar{\overline{\tau}}%
    The surjection $\calD\to \Ap(f)$ sends $\calD_{\geq m}$ onto $\mm^m$.
    Consider the classes $\sigmabar_1, \ldots ,\sigmabar_l$ in $\mm^m$. By
    assumption, they yield nonzero and linearly independent elements of
    $\mm^m/\mm^{m+1}$. Complete them to a basis of $\mm^m/\mm^{m+1}$ and let
    $\taubar_1, \ldots ,\taubar_l\in (0:\mm^{m+1})$ be the duals of
    $\sigmabar_1, \ldots ,\sigmabar_l$ under the perfect pairing of formula \eqref{formula:mubar_perfect_pairing}.
    Let $\tau_1, \ldots ,\tau_l\in \calD$ be any lifts of $\taubar_1, \ldots
    ,\taubar_l$. For every $j$ we have $\mm^{m+1}\taubar_j = 0$, so
    $\calD_{\geq m+1} (\tau_j\circ f) = 0$. Hence, $\deg(\tau_j \circ f)\leq
    m$, which proves~\ref{it:firstPerp}.

    By construction of $\mubar$, the constant $\mubar(\sigmabar_i ,\taubar_j)$ is the degree
    zero part of $(\sigma_i\tau_j)\circ f$. By the choice of $\taubar_j$, we
    have
    \[\sigmabar_i\taubar_j \in \mm^m\cdot (0:\mm^{m+1}) \subseteq     (0:\mm),\] so
    $\sigmabar_i\taubar_j\mm = 0$ and hence
    $\calD_{\geq 1}\circ (\sigma_i\tau_j \circ f) = 0$. It follows that
    $\sigma_i\tau_j\circ f\in \bbK$ is a constant polynomial. We can then
    identify it with the constant $\mubar(\sigmabar_i, \taubar_j)$ and obtain
    part~\ref{it:secondPerp}.
\end{proof}

\section{\normalfont \scshape\fontfamily{ptm}\selectfont Apolar algebras of powers of
polynomials}
\noindent In this section we focus on the apolar algebras {of the powers of} of a given
polynomial $f$. It turns out that it is easier to start with the tensor
powers.

For {a number} $d\in\bbN$ take the $d$-th tensor power
$\calR^{\boxtimes d}$ and its dual space $\calD^{\boxtimes d}$. These can be
identified as the polynomial rings
\[
\calR^{\boxtimes d}\coloneq\bbK[x_{ij}\ |\ i=1,\dots,n,\,j=1,\dots,d],\qquad
\calD^{\boxtimes d}\coloneq\bbK[\alpha_{ij}\ |\ i=1,\dots,n,\,j=1,\dots,d],
\]
{respectively.}
For a polynomial $f\in \calR$, its \emph{$d$-th tensor power} $f^{\boxtimes d}$ is defined as
\[
    f^{\boxtimes d} := f(x_{11}, \ldots ,x_{n1})f(x_{12}, \ldots ,
    x_{n2})\cdots f\left( x_{1d}, \ldots ,x_{nd} \right).
\]
In the following we will use the notation $\bfx_{\bullet j}$ for $x_{1j}, \ldots,x_{nj}$ and $\alpha_{\bullet j}$ for $\alpha_{1j}, \ldots ,\alpha_{nj}$,
where $j\in \{1, \ldots ,d\}$.
\begin{prop}[apolar algebra of the tensor power]\label{prop:tensorPower}
    The apolar algebra $\Ap(f^{\boxtimes d})$ is isomorphic to
    $\Ap(f)^{\otimes d}$. In particular, it has dimension $\bigl(\dim_{\bbK} \Ap(f)\bigr)^d$.
\end{prop}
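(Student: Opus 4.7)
The plan is to exploit the fact that the variables involved in the different tensor factors are disjoint. This makes the apolarity action separate along the tensor decomposition, reducing the calculation to applying the isomorphism~\eqref{eq:apolarIso} factor by factor.

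First, I would unpack the notation. Set $f_j \coloneq f(\bfx_{\bullet j})$ and write $\calD_j \coloneq \bbK[\bfalpha_{\bullet j}]$, so that $\calD^{\boxtimes d} = \calD_1 \otimes_{\bbK} \cdots \otimes_{\bbK} \calD_d$ and $f^{\boxtimes d} = f_1 f_2 \cdots f_d$, viewed as an element of $\calR^{\boxtimes d} = \bbK[\bfx_{\bullet 1}] \otimes \cdots \otimes \bbK[\bfx_{\bullet d}]$. The key computation is that for pure tensors $\sigma_j \in \calD_j$,
\[
    (\sigma_1 \otimes \cdots \otimes \sigma_d) \circ f^{\boxtimes d} = (\sigma_1 \circ f_1)(\sigma_2 \circ f_2) \cdots (\sigma_d \circ f_d),
\]
because derivatives in the variables $\bfalpha_{\bullet j}$ act trivially on the factors $f_k$ for $k \neq j$. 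By linearity, this identifies the catalecticant map as a tensor product of catalecticant maps:
\[
    \Cat_{f^{\boxtimes d}} = \Cat_{f_1} \otimes \cdots \otimes \Cat_{f_d}\colon \calD^{\boxtimes d} \longrightarrow \calR^{\boxtimes d}.
\]

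Next, I would apply~\eqref{eq:apolarIso} to each individual factor. Taking images of a tensor product of linear maps gives the tensor product of images, so
\[
    \Ap(f^{\boxtimes d}) \simeq \im \Cat_{f^{\boxtimes d}} = \bigotimes_{j=1}^d \im \Cat_{f_j} \simeq \bigotimes_{j=1}^d \Ap(f_j) \simeq \Ap(f)^{\otimes d}
\]
as $\bbK$-vector spaces. The dimension assertion $(\dim_{\bbK} \Ap(f))^d$ is immediate.

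Finally, I would upgrade the above to an isomorphism of algebras. For every $j$ and every $\sigma \in \Ann(f_j) \subseteq \calD_j$, we have $\sigma \circ f^{\boxtimes d} = (\sigma \circ f_j) \prod_{k \neq j} f_k = 0$, so the ideal generated by each $\Ann(f_j)$ inside $\calD^{\boxtimes d}$ lies in $\Ann(f^{\boxtimes d})$. This yields a surjective ring homomorphism
\[
    \Ap(f)^{\otimes d} \simeq \calD^{\boxtimes d} \big/ \bigl(\Ann(f_1) + \cdots + \Ann(f_d)\bigr) \twoheadrightarrow \Ap(f^{\boxtimes d}),
\]
which must be an isomorphism by the dimension count above. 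I do not anticipate a genuine obstacle: the only subtle point is to verify cleanly that the catalecticant factorizes as a tensor product of maps, which is where the disjointness of variables across the tensor factors is essential.
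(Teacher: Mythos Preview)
Your proof is correct and follows essentially the same approach as the paper: both arguments use the disjointness of variables to see that $(\sigma_1\otimes\cdots\otimes\sigma_d)\circ f^{\boxtimes d}=(\sigma_1\circ f_1)\cdots(\sigma_d\circ f_d)$, set up the surjection $\Ap(f)^{\otimes d}\twoheadrightarrow\Ap(f^{\boxtimes d})$ from the containment $\Ann(f_j)\subseteq\Ann(f^{\boxtimes d})$, and conclude by a dimension count. The only cosmetic difference is that you package the dimension bound as ``image of a tensor product of linear maps equals tensor product of images,'' whereas the paper phrases it as exhibiting the products $g_1(\bfx_{\bullet 1})\cdots g_d(\bfx_{\bullet d})$ in $\calD^{\boxtimes d}\circ f^{\boxtimes d}$.
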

\begin{proof}
    Let $I = \Ann(f)$ and for every $j=1, \ldots ,d$, let $I_j \subseteq
    \bbK[\alpha_{\bullet j}]$ be a copy of $I$. Then every $I_j$
    annihilates $f^{\boxtimes d}$, so \[ \Ap(f^{\boxtimes d}) \coloneq \calD^{\boxtimes
    d}/\Ann(f^{\boxtimes d})\] is a quotient of
    \[
        \frac{\calD^{\boxtimes d}}{(I_1) + (I_2) + \cdots + (I_d)} \simeq
        \frac{\bbK[\alpha_{\bullet 1}]}{I_1}\otimes_{\bbK}
        \frac{\bbK[\alpha_{\bullet 2}]}{I_2}\otimes_{\bbK}
         \cdots \otimes_{\bbK}
        \frac{\bbK[\alpha_{\bullet d}]}{I_d} \simeq \Ap(f)\otimes \cdots \otimes \Ap(f) = \Ap(f)^{\otimes d}.
     \]
Moreover, if $g_1, \ldots ,g_d\in \calD\circ f$ are arbitrary partials of $f$, say
$g_i = \sigma_i\circ f$, then
\[
    g_1(\bfx_{\bullet 1})\cdots g_{d}(\bfx_{\bullet d}) =
    \bigl(\sigma_{1}(\alpha_{\bullet 1})\cdots
    \sigma_{d}(\alpha_{\bullet d})\bigr)\circ f^{\boxtimes d}.
\]
This shows that the linear space $\calD^{\boxtimes d}\circ f^{\boxtimes d}$ has
dimension at least $\bigl(\dim_{\bbK} \Ap(f)\bigr)^d$. Therefore, by formula
\eqref{eq:apolarIso}, we also have
\[
    \dim_{\bbK}\bigl(\Ap(f^{\boxtimes d})\bigr)\leq\bigl(\dim_{\bbK} \Ap(f)\bigr)^d.\qedhere
\]
\end{proof}
\autoref{prop:tensorPower} implies that the algebra
$\Ap(f^{\boxtimes d})$ is closely related to $\Ap(f)$. For example, if
$\Ap(f)$ is smoothable by a family $\bbK[\![t]\!]\to \calA$, then
$\Ap(f^{\boxtimes d})$ is smoothable by a family $\bbK[\![t]\!]\to
\calA^{\otimes d}$.

We now move on to discussing the usual powers $f^d$.
We will see that the situation here is much more
subtle. This {might be} unexpected, so we give an example. Namely, the
dimension of the $\bbK$-vector space
$\Ap(f^2)$ is by no means determined by $\dim_{\bbK}\bigl(\Ap(f)\bigr)$. In fact it is not
determined by the isomorphism class of the algebra $\Ap(f)$ itself!
\begin{exam}\label{ex:varyingDegreeOfSquare}
    Let $f \coloneq x_1^2$ and $g \coloneq x_1^2 + x_2$. We have \[\Ap(f) \simeq
    \bbK[\varepsilon]/(\varepsilon^3) \simeq \Ap(g).\] However, $\dim_{\bbK}\bigl( \Ap(f^2)\bigr)
    = 5$, while $\dim_{\bbK}\bigl( \Ap(g^2)\bigr) = 6$.
\end{exam}
We now give a bound on the dimension of $\Ap(f^d)$. It is most convenient to
work with the space $\calD \circ f^d$. Let $\calF \coloneqq \calD\circ f$.
Consider the space
{\[\calF^{d} \coloneqq \underbrace{\calF\cdots  \calF}_d \subseteq \calR,\]}
where
the product is taken $d$ times. This space contains
 \[f^d = \underbrace{f\cdots f}_d\]
 and it is closed under the action of $\calD$, hence
 \begin{equation}\label{eq:containment}
    \calD\circ f^d \subseteq \calF^{d}.
\end{equation}
{There is also} a natural surjective map $S^d\calF\to \calF^{d}$ which maps a formal product
of $d$ elements of $\calF$ to the actual product. All in all we get a
diagram
\begin{equation}\label{eq:connections}
    \begin{tikzcd}
        \calF^{d} & \ar[l, two heads] S^d \calF\\
        \calD\circ f^{d}\ar[u, hook]
    \end{tikzcd}
\end{equation}
\begin{prop}[dimension of the apolar to the symmetric
    power]\label{prop:inequalityOnDuals}
    For any arbitrary polynomial $f$, let $\ell = \dim_{\bbK} \bigl(\Ap(f)\bigr)$. Then we have
    \[
        \dim_{\bbK}\bigl( \Ap(f^{d})\bigr) \leq \binom{\ell+d-1}{d}
    \]
    and equality holds if and only if both arrows in diagram~\eqref{eq:connections}
    are bijections.
\end{prop}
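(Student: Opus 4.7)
The plan is to chase dimensions through the diagram~\eqref{eq:connections} together with the identification $\Ap(f^d)\simeq \calD\circ f^d$ from~\eqref{eq:apolarIso}.

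First I would note that, by~\eqref{eq:apolarIso}, we have $\dim_\bbK\bigl(\Ap(f^d)\bigr)=\dim_\bbK(\calD\circ f^d)$, and similarly $\dim_\bbK\calF = \dim_\bbK\bigl(\Ap(f)\bigr)=\ell$. The symmetric power $S^d\calF$ of an $\ell$-dimensional $\bbK$-vector space has dimension exactly $\binom{\ell+d-1}{d}$.

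Next I would extract the two inequalities encoded by the diagram: the inclusion $\calD\circ f^d\hookrightarrow \calF^d$ gives
\[
\dim_\bbK(\calD\circ f^d)\ \leq\ \dim_\bbK(\calF^d),
\]
while the surjection $S^d\calF\twoheadrightarrow \calF^d$ (induced by multiplying $d$ formal factors in $\calR$) gives
\[
\dim_\bbK(\calF^d)\ \leq\ \dim_\bbK(S^d\calF)\ =\ \binom{\ell+d-1}{d}.
\]
Chaining these yields the claimed upper bound
\[
\dim_\bbK\bigl(\Ap(f^d)\bigr)\ \leq\ \binom{\ell+d-1}{d}.
\]

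For the equality statement, both inequalities above become equalities exactly when the inclusion $\calD\circ f^d\hookrightarrow\calF^d$ is an isomorphism and the surjection $S^d\calF\twoheadrightarrow \calF^d$ is an isomorphism, i.e.\ when both arrows in~\eqref{eq:connections} are bijections. Here we use that a linear injection (resp.\ surjection) between finite-dimensional spaces is a bijection precisely when the source and target have equal dimension. There is no serious obstacle; the only thing to verify carefully is that the two maps in~\eqref{eq:connections} are well defined, which was already established in the discussion preceding the proposition (the inclusion follows from $\calF^d$ being $\calD$-stable and containing $f^d$, and the surjection is the canonical multiplication map out of $S^d\calF$).
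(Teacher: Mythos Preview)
Your proposal is correct and follows essentially the same approach as the paper's own proof: chase dimensions through the diagram~\eqref{eq:connections}, using the inclusion and the surjection to obtain the chain $\dim_\bbK\Ap(f^d)\leq\dim_\bbK\calF^d\leq\dim_\bbK S^d\calF=\binom{\ell+d-1}{d}$, with equality precisely when both arrows are bijections. The paper's proof is just a one-line version of what you wrote.
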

\begin{proof}
    Diagram~\eqref{eq:connections} {yields} inequalities
    \[
        \dim_{\bbK}\bigl(\Ap(f^{d})\bigr) \leq \dim_{\bbK} \calF^d \leq \dim_{\bbK} S^d \calF =
        \binom{\ell+d-1}{d}.\qedhere
    \]
\end{proof}

The most interesting case for us is when the dimension of $\Ap(f^d)$ is
maximal for all $d$. {In this case}, we say that $f$ has \emph{maximal growth
of powers.} For this class of polynomials, we  show that the problem of \autoref{ex:varyingDegreeOfSquare} disappears, since the dimension of $\Ap(f^d)$ is
determined by $\dim_{\bbK}\Ap(f)$, see
\autoref{cor_apolar_algebra_p^d_lower_p_otimes_d}.
For this, we need to switch from spaces of partials to quotients of $\calD$ and { we prepare for this below.}

\newcommand{\injR}{\iota_{\calR}}
\newcommand{\projR}{\pi_{\calR}}
\newcommand{\injD}{\iota_{\calD}}
\newcommand{\projD}{\pi_{\calD}}
There is a canonical injective homomorphism $\injR\colon \calR \hookrightarrow
\calR^{\boxtimes d}$ of
algebras defined on variables by
\[
    \injR\left( x_i \right) \coloneq \frac{1}{d}\sum_{j=1}^dx_{ij}.
\]
There is a corresponding projection homomorphism
$\projR\colon\calR^{\boxtimes d}\to \calR$, which satisfies the equality
$\projR\circ\injR=\id_{\calR}$
and it is defined by $\projR(x_{ij}) \coloneq x_i$ for every $i=1,\dots,n$.
We do the same for the space $\calD^{\boxtimes d}$.

To preserve duality, we define the maps $\injD\colon \calD\to \calD^{\boxtimes
d}$ and $\projD\colon \calD^{\boxtimes d}\to \calD$ such that
\[
    \injD(\alpha_i) \coloneq \sum_{j=1}^d\alpha_{ij},\qquad
    \projD(\alpha_{ij}) \coloneq \frac{1}{d}\alpha_i,
\]
for every $i=1,\dots,n$ and $j=1,\dots,d$.
Again, we have $\projD\circ\injD=\id_{\calD}$.
The duality alluded to above follows from the equality
\[
\injD(\alpha_i)\circ
\injR(x_k)=\biggl(\Sum_{j=1}^d\alpha_{ij}\biggr)\circ\biggl(\dfrac{1}{d}\Sum_{j=1}^dx_{kj}\biggr)=\frac{1}{d}\sum_{j=1}^d(\alpha_{ij}\circ
x_{kj})=\begin{cases}
    1, & \text{if $i = k$},\\
    0, & \text{otherwise}.
\end{cases}
\]

The most important formal property of the above maps is captured by the following
statement. The reader will probably recognize that the argument is about
the representations of the symmetric group $\Sigma_d$, but we keep the proof elementary.
\begin{lem}
\label{lem_projection_apolarity_action}
Let $f_j\in\bbK[x_{\bullet j}]$ for every $j=1,\dots,d$. For any $\sigma\in
\calD$ we have
\[
\projR\bigl(\injD(\sigma)\circ(f_1\cdots f_d)\bigr)=\sigma \circ
\projR(f_1 \cdots  f_d).
\]
\end{lem}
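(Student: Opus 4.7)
The plan is to first strengthen the claim: I would prove that for every $h \in \calR^{\boxtimes d}$ (not merely for $h = f_1 \cdots f_d$) and every $\sigma \in \calD$ we have $\projR\bigl(\injD(\sigma) \circ h\bigr) = \sigma \circ \projR(h)$. This generalization is essentially free but necessary, because the induction on $\deg(\sigma)$ below produces intermediate polynomials of the form $\injD(\sigma') \circ h$, which are typically not products. The special case $h = f_1\cdots f_d$ is the lemma as stated.

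By $\bbK$-linearity of both sides in $\sigma$, it suffices to treat monomials $\sigma = \alpha_{i_1} \cdots \alpha_{i_m}$, and I would induct on $m$. The inductive step is formal: writing $\sigma = \alpha_i \sigma_2$ and using that $\injD$ is a ring homomorphism together with the associativity $(\alpha_i \sigma_2) \circ h = \alpha_i \circ (\sigma_2 \circ h)$ of the apolarity action, one rewrites
\begin{align*}
\projR\bigl(\injD(\alpha_i \sigma_2) \circ h\bigr) &= \projR\bigl(\injD(\alpha_i) \circ (\injD(\sigma_2) \circ h)\bigr) \\
&= \alpha_i \circ \projR\bigl(\injD(\sigma_2) \circ h\bigr) \\
&= \alpha_i \circ \bigl(\sigma_2 \circ \projR(h)\bigr) = (\alpha_i \sigma_2) \circ \projR(h),
\end{align*}
where the second equality invokes the base case applied to the auxiliary polynomial $\injD(\sigma_2) \circ h$, and the third uses the inductive hypothesis for $\sigma_2$.

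The whole argument therefore rests on the base case $\sigma = \alpha_i$, and this is just the chain rule. We have $\injD(\alpha_i) = \sum_{j=1}^d \alpha_{ij}$ acting as $\sum_{j=1}^d \partial/\partial x_{ij}$, while $\projR$ is the substitution $x_{ij} \mapsto x_i$, i.e.\ precomposition with the diagonal map $\bbA^n \to \bbA^{nd}$. Differentiating $\projR(h) = h(x_1,\ldots,x_1,\,x_2,\ldots,x_2,\,\ldots,\,x_n,\ldots,x_n)$ with respect to $x_i$ and using the chain rule yields
\[
\frac{\partial}{\partial x_i}\projR(h) = \projR\biggl(\sum_{j=1}^{d}\frac{\partial h}{\partial x_{ij}}\biggr) = \projR\bigl(\injD(\alpha_i)\circ h\bigr),
\]
which is the desired identity since the left-hand side equals $\alpha_i\circ \projR(h)$. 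I do not foresee a real obstacle; the only subtle point is to recognize up front that the induction forces one to allow arbitrary $h$, after which the whole lemma collapses to the observation that partial differentiation commutes with pullback along the diagonal embedding.
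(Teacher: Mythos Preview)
Your proof is correct and follows essentially the same inductive strategy as the paper: reduce to the base case $\sigma=\alpha_i$ and then induct on the degree of a monomial $\sigma$. The only difference is cosmetic: the paper works inside the subspace $\calP\subseteq\calR^{\boxtimes d}$ spanned by products $f_1\cdots f_d$ and verifies that $\calP$ is a $\calD^{\boxtimes d}$-submodule so the induction stays inside it, whereas you simply allow all $h\in\calR^{\boxtimes d}$, which makes that closure check unnecessary; correspondingly, the paper phrases the base case as the Leibniz rule on the product $f_1\cdots f_d$, while you phrase it as the chain rule for pullback along the diagonal, which is the same computation.
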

Before we give the proof, let us {do} an illustrative example. {Take} $d = 2$
and $\sigma = \alpha_1\alpha_2$. Then \[\injD(\sigma) = (\alpha_{11} +
\alpha_{12})(\alpha_{21} + \alpha_{22}),\] hence
\[
    \injD(\sigma) \circ f_1f_2 = (\alpha_{11}\alpha_{21}\circ f_1)f_2 +
    f_1(\alpha_{12}\alpha_{22}\circ f_2) + (\alpha_{11}\circ
    f_1)(\alpha_{22}\circ f_2) + (\alpha_{21}\circ f_1) (\alpha_{12}\circ f_2).
\]
\newcommand{\barf}{\overline{f}}%
Let $\barf_i = \projR(f_i)$.
Applying $\projR$ to the right side, we obtain
\[
    \bigl(\alpha_{1}\alpha_{2}\circ \barf_1\bigr)\barf_2 +
    \barf_1\bigl(\alpha_{1}\alpha_{2}\circ \barf_2\bigr) + \bigl(\alpha_{1}\circ
    \barf_1\bigr)\bigl(\alpha_{2}\circ \barf_2\bigr) + \bigl(\alpha_{2}\circ \barf_1\bigr)
    \bigl(\alpha_{1}\circ \barf_2\bigr) = (\alpha_1\alpha_2)\circ\bigl(\barf_1\barf_2\bigr).
\]

\begin{proof}[Proof of \autoref{lem_projection_apolarity_action}]
\def\polyspace{\calP}
Considering the diagram
\[
\begin{tikzcd}
\calD\otimes\calR^{\boxtimes d}\ar[r,hook,"\injD"]\ar[dr,"\projR"']&\calD^{\boxtimes d}\otimes\calR^{\boxtimes d}\ar[r,"\mathrm{action}"]\ar[d,"\projD\otimes\projR"]&\calR^{\boxtimes d}\ar[d,"\projR"]\\
&\calD\otimes\calR\ar[r, "\mathrm{action}"] &\calR
\end{tikzcd}
\]
we have to prove that it commutes for every $\sigma\in\calD$ and every
$f_1,\dots,f_d$ with $f_j\in \bbK[x_{\bullet j}]$.
Let $\polyspace$ be the linear space spanned by all products $f_1, \ldots
,f_d$ as above. Then $\polyspace \subseteq \calR^{\boxtimes d}$ is a
$\calD^{\boxtimes d}$-submodule, in particular it is a $\calD$-submodule.
We will prove that for every $p\in \polyspace$ and $\sigma\in \calD$ we have
\begin{equation}\label{eq:compatibility}
    \projR\left(\injD(\sigma)\circ p\right)=\sigma \circ \projR(p).
\end{equation}
First, we do it for $\sigma = \alpha_{i}$ being a linear form. It is sufficient to check it {in the case where}
$p$ is a product {$f_1\cdots  f_d$} as above.

\begin{align}
\label{rel_action_variables}
\nonumber\projR\bigl(\injD(\alpha_i)\circ (f_1\cdots
f_d)\bigr)&=\projR\Bigl((\alpha_{i1}+\cdots+\alpha_{id})\circ
\bigl(f_1(x_{\bullet 1})\cdots f_d(x_{\bullet d})\bigr)\Bigr)\\[1ex]
&\nonumber=\sum_{j=1}^d\projR\Bigl(\alpha_{ij}\circ\bigl(f_1(x_{\bullet
1})\cdots f_d(x_{\bullet d})\bigr)\Bigr)\\
&\nonumber=\sum_{j=1}^d\projR\biggl(\bigl(\alpha_{ij}\circ f_j(x_{\bullet
j})\bigr)\prod_{k\neq j}f_k(x_{\bullet k})\biggr)\\
&\nonumber=\sum_{j=1}^d\bigl(\alpha_{i}\circ
f_j(x_{\bullet})\bigr)\prod_{k\neq j}f_k(x_{\bullet})\\[2ex]
&=\alpha_i\circ \bigl(f_1(x_{\bullet})\cdots
f_d(x_{\bullet})\bigr)=\alpha_i\circ\projR(f_1\cdots f_d).
\end{align}
This proves~\eqref{eq:compatibility} {in the case where $\sigma$ is a variable}. We will now
prove~\eqref{eq:compatibility} in the case where $\sigma$ is a monomial. We do this by induction
on the degree. To do the induction step, suppose that $\tau = \alpha_i\sigma$ and that~\eqref{eq:compatibility} holds for $\sigma$. Take any $p\in
\polyspace$. We have
$\injD(\sigma)\circ p\in \polyspace$, and hence
\begin{align*}
    \tau\circ \projR(p) &= \alpha_i \circ \bigl(\sigma \circ \projR(p)\bigr)
    \stackrel{\eqref{eq:compatibility}\,\mathrm{for}\,\sigma}{=} \alpha_i
    \circ \Bigl(\projR\bigl(\injD(\sigma)\circ p\bigr)\Bigr)\\
    &\stackrel{\eqref{eq:compatibility}\,\mathrm{for}\,\alpha_i}{=} \projR\Bigl(\injD(\alpha_i)\circ
    \bigl(\injD(\sigma)\circ p\bigr)\Bigr) = \projR\bigl(\injD(\tau)\circ p\bigr).
\end{align*}
This concludes the proof of~\eqref{eq:compatibility} for $\sigma$ a monomial.
For general $\sigma$, the proof follows by linearity.
\end{proof}

\autoref{lem_projection_apolarity_action} implies the following key corollary.
\begin{cor}
\label{cor_apolar_algebra_p^d_lower_p_otimes_d}
Let $f\in\calR$ and $d\geq 1$. Take $\ell = \dim_{\bbK} \bigl(\Ap(f)\bigr)$. Suppose that
\[
    \dim_{\bbK} \bigl(\Ap(f^d)\bigr) = \binom{\ell+d-1}{d}.
\]
Then $\Ap(f^d)$ is isomorphic to $S^d \Ap(f)$.
\end{cor}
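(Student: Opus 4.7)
My approach is to exhibit an explicit algebra homomorphism $\calD \to S^d\Ap(f)$ and use a dimension sandwich to show its kernel is exactly $\Ann(f^d)$ and that it is surjective.

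Set $A := \Ap(f)$. Since $f^{\boxtimes d}$ is invariant under the obvious $\Sigma_d$-action on $\calR^{\boxtimes d}$ permuting the blocks $\bfx_{\bullet 1},\dots,\bfx_{\bullet d}$, the ideal $\Ann(f^{\boxtimes d})\subseteq \calD^{\boxtimes d}$ is $\Sigma_d$-stable, and by \autoref{prop:tensorPower} the algebra isomorphism $\Ap(f^{\boxtimes d})\simeq A^{\otimes d}$ is $\Sigma_d$-equivariant, so $(A^{\otimes d})^{\Sigma_d}$ makes sense as an algebra—this is, by definition, $S^dA=S^d\Ap(f)$. The map $\injD\colon \calD\to\calD^{\boxtimes d}$ is a ring homomorphism whose image consists of $\Sigma_d$-invariant elements (since $\injD(\alpha_i)=\sum_j\alpha_{ij}$ is symmetric). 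Composing with the quotient $\calD^{\boxtimes d}\twoheadrightarrow A^{\otimes d}$ we obtain a ring homomorphism
\[
\Phi\colon \calD \longrightarrow (A^{\otimes d})^{\Sigma_d}=S^dA,\qquad \Phi(\sigma)=\injD(\sigma)\bmod \Ann(f^{\boxtimes d}).
\]

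Next I would identify $\ker\Phi$. By definition $\sigma\in\ker\Phi$ iff $\injD(\sigma)\circ f^{\boxtimes d}=0$. Applying \autoref{lem_projection_apolarity_action} to $p=f^{\boxtimes d}$ gives
\[
\projR\bigl(\injD(\sigma)\circ f^{\boxtimes d}\bigr)=\sigma\circ\projR(f^{\boxtimes d})=\sigma\circ f^d,
\]
so $\ker\Phi\subseteq \Ann(f^d)$. In particular $\Phi$ factors through a ring homomorphism $\bar\Phi\colon \calD/\ker\Phi\hookrightarrow S^dA$, and moreover $\calD/\ker\Phi$ surjects onto $\Ap(f^d)$.

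From these two maps we get the dimension sandwich
\[
\dim_{\bbK}\Ap(f^d)\ \leq\ \dim_{\bbK}\bigl(\calD/\ker\Phi\bigr)\ \leq\ \dim_{\bbK}S^dA\ =\ \binom{\ell+d-1}{d}.
\]
The hypothesis forces both inequalities to be equalities, whence $\ker\Phi=\Ann(f^d)$ and $\bar\Phi$ is an isomorphism of algebras $\Ap(f^d)\xrightarrow{\sim} S^d\Ap(f)$. The only subtlety—the reverse inclusion $\Ann(f^d)\subseteq\ker\Phi$, which a priori is \emph{not} obvious because $\projR$ is far from injective on $\calF^{\boxtimes d}$—is bypassed entirely by this counting argument; this is the step I expect could easily go wrong if attempted directly, but it becomes free once the two natural inclusions are in place.
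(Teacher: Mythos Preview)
Your proof is correct and follows essentially the same approach as the paper's: both construct the map $\calD\to S^dA$ via $\injD$, use \autoref{lem_projection_apolarity_action} to show its kernel is contained in $\Ann(f^d)$, and then invoke the dimension hypothesis to force the resulting triangle (your sandwich) to consist of isomorphisms. The paper presents this as a commutative triangle~\eqref{eq:triangle} while you phrase it as a chain of inequalities, but the content is identical.
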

\begin{proof}
Let $\sigma\in\calD$. Then
\[
\projR\bigl(\injD(\sigma)\circ f^{\boxtimes d}\bigr)=\sigma\circ f^d.
\]
In particular, we have \[\calD\cap\Ann(f^{\boxtimes d})\subseteq\Ann(f^d).\]
By the construction of $\injD$, the image of the map \[\calD\to \calD^{\boxtimes d}\to \Ap(f^{\boxtimes d})\]
lands in the $\Sigma_d$-fixed part, so we obtain the following diagram
\begin{equation}\label{eq:triangle}
    \begin{tikzcd}
        \dfrac{\calD}{\calD\cap\Ann(f^{\boxtimes d})} \ar[r, hook]\ar[d,
        two heads] & \Ap(f^{\boxtimes d})^{\Sigma_d}\\
        \Ap(f^d)
    \end{tikzcd}
\end{equation}
By \autoref{prop:tensorPower} the algebra $\Ap(f^{\boxtimes d})$ is
isomorphic to $\Ap(f)^{\otimes d}$. The $\Sigma_d$-action on $\Ap(f^{\boxtimes
d})$ corresponds to permuting factors in $\Ap(f)^{\otimes d}$, so that
\[\Ap(f^{\boxtimes d})^{\Sigma_d}  \simeq S^d\bigl(\Ap(f)\bigr)\] as algebras. We have
\[\dim_{\bbK} S^d\bigl(\Ap(f)\bigr) = \binom{\ell+d-1}{d}.\] By assumption, this is also the
dimension of $\Ap(f^d)$. Therefore, both arrows in diagram~\eqref{eq:triangle} are
isomorphisms. The claim follows.
\end{proof}

\section{\normalfont \scshape\fontfamily{ptm}\selectfont Encompassing polynomials}\label{sec:encompassing}
{The inequality} in \autoref{prop:inequalityOnDuals} and the consequence of equality
in \autoref{cor_apolar_algebra_p^d_lower_p_otimes_d}
naturally {lead to the question: for which polynomials the equality holds?} In this section we answer this question by proving
\autoref{ref:intro:growth}. The key property for us
is given by the following definition.
\begin{defn}\label{defn:encompassing}
    A polynomial $f\in\calR$ is \emph{encompassing} if there is no nonzero element $g\in\calD
\circ    f$ such that $g_{\leq 1} = 0$. Equivalently, for any basis $\{g_1, \ldots
    ,g_\ell\}$ of $\calD\circ f$, the degree $\leq 1$ parts $(g_1)_{\leq 1}$, \ldots ,
    $(g_\ell)_{\leq 1}$ are linearly independent.
\end{defn}
We observe how this notion translates to the
apolar side.
\begin{lem}\label{lem:encompassingInDualVariables}
    A polynomial $f$ is encompassing if and only if the image of
    $\calD_{\leq 1}$ in $\Ap(f)$ spans this algebra (as a $\bbK$-vector space).
\end{lem}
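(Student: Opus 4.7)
The plan is to use the Gorenstein pairing $\mu$ on $A=\Ap(f)$ recalled just above to recognize both conditions as injectivity of the same map.

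First, I would transport both sides via the isomorphism of $\calD$-modules $\calD\circ f\simeq\Ap(f)$ from \eqref{eq:apolarIso}. Under this identification the image of $\calD_{\leq 1}$ in $\Ap(f)$ corresponds to the subspace $V\coloneq\calD_{\leq 1}\circ f$ of $\calD\circ f$, so ``the image of $\calD_{\leq 1}$ spans $\Ap(f)$'' becomes ``$V=\calD\circ f$''. On the other hand, by the very definition of encompassing, $f$ is encompassing if and only if the truncation map
\[
\pi\colon\calD\circ f\longrightarrow\calR_{\leq 1},\qquad g\longmapsto g_{\leq 1},
\]
is injective.

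The heart of the argument is a short computation using $\mu$: for any $\sigma\in\calD$, the constant term of $\sigma\circ f$ equals $\mu(\bar\sigma,\bar 1)$ by the very definition of $\mu$, while the coefficient of $x_i$ in $\sigma\circ f$ is $\bigl(\alpha_i\circ(\sigma\circ f)\bigr)_0=\bigl((\alpha_i\sigma)\circ f\bigr)_0=\mu(\bar\sigma,\bar\alpha_i)$. Hence
\[
(\sigma\circ f)_{\leq 1}=\mu(\bar\sigma,\bar 1)+\sum_{i=1}^n\mu(\bar\sigma,\bar\alpha_i)\,x_i.
\]
Transporting $\pi$ across the isomorphism $\calD\circ f\simeq A$, this identifies $\pi$ with the ``pair against $\bar 1$ and the $\bar\alpha_i$'' map on $A$, whose kernel is precisely the $\mu$-orthogonal $V^{\perp}$ of the subspace $V\subseteq A$ spanned by $\bar 1,\bar\alpha_1,\ldots,\bar\alpha_n$.

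Finally, I would invoke perfectness of $\mu$ (property (1) in the preceding subsection) to conclude $V^{\perp}=0$ if and only if $V=A$. Chaining the equivalences: $f$ is encompassing $\iff$ $\pi$ is injective $\iff$ $V^{\perp}=0$ $\iff$ $V=A$ $\iff$ the image of $\calD_{\leq 1}$ spans $\Ap(f)$. I do not anticipate any serious obstacle; the only point requiring care is that $\mu$ really is a perfect pairing for an arbitrary, possibly nonhomogeneous, $f$, which is exactly what the setup of the preceding subsection provides.
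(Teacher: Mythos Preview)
Your proof is correct and complete. The key computation showing that $(\sigma\circ f)_{\leq 1}=\mu(\bar\sigma,\bar 1)+\sum_i\mu(\bar\sigma,\bar\alpha_i)\,x_i$ is exactly right, and perfectness of $\mu$ then finishes the argument cleanly.

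The paper takes a different route. Instead of invoking the Gorenstein pairing~$\mu$ from \S2.7, it uses the $\ldf$/$\tdf$ machinery of \S2.6: it computes the dimension of the image of $\calD_{\leq 1}$ in $\Ap(f)$ as $1+n-\dim_{\bbK}(I\cap\calD_{\leq 1})=1+n-\dim_{\bbK}\bigl(\tdf(I)\bigr)_{\leq 1}$, then applies \autoref{lem:ann_ldf(M)=tdf(I)} to rewrite this as $\dim_{\bbK}\bigl(\ldf(\calD\circ f)\bigr)_{\leq 1}$, and observes that $f$ is encompassing precisely when this equals $\ell$. Your argument is more direct and conceptually transparent: it exhibits the truncation map $g\mapsto g_{\leq 1}$ as literally the $\mu$-dual of the inclusion of $\calD_{\leq 1}$, so the equivalence is just $V^{\perp}=0\iff V=A$. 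The paper's approach, on the other hand, keeps the argument within the $\ldf$/$\tdf$ framework that is reused immediately afterward in \autoref{prop:encompassingImpliesMaximalGrowth}, so it avoids bringing in the Iarrobino pairing, which in the paper's organization is reserved for the subtler \autoref{ref:dualElement:lem}. Both proofs are short; yours is the cleaner one-liner once $\mu$ is on the table.
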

\begin{proof}
    Let $I \coloneq \Ann(f)$ and let $\ell := \dim_{\bbK} \bigl(\Ap(f)\bigr)$. The dimension of the image $\im(\calD_{\leq 1})
    \subseteq \Ap(f)$ is \[1+n - \dim_{\bbK} (I\cap \calD_{\leq 1}).\]
    A {polynomial} is linear if and only if its top degree form is linear, hence \[1+n-\dim_{\bbK}(I\cap \calD_{\leq 1}) = 1+n - \dim_{\bbK} \bigl(\tdf(I)\bigr)_{\leq 1}.\]
    By \autoref{lem:ann_ldf(M)=tdf(I)} we have
    \[
1+n-\dim_{\bbK} \bigl(\tdf(I)\bigr)_{\leq 1} = \dim_{\bbK} \bigl(\ldf(\calD \circ f)\bigr)_{\leq 1}.
    \]
    The polynomial $f$ is encompassing {if and only if} \[\dim_{\bbK} \bigl(\ldf(\calD \circ
    f)\bigr)_{\leq 1} = \ell.\] This {is true if and only if} the dimension of the image
    of $\calD_{\leq 1}$ is $\ell$.
\end{proof}

For {a finite dimensional} vector space $V\subseteq\calR$, we say that $V$ is
\textit{homogeneously algebraically independent} if for any basis
$\{v_1,\dots,v_k\}$ of $V$ there is no nonzero \emph{homogeneous} polynomial
$p\in\bbK[t_1,\dots,t_k]$ such that $p(v_1,\dots,v_k)=0$. {We note} that if this condition holds for some basis of $V$, then it holds for every basis of $V$.
 The only spaces we
will consider are spaces of partial derivatives and they always contain the
element $1\in \calR$. Taking $v_1 = 1$, there is always a trivial
(nonhomogeneous!) algebraic
dependence $p(t_1, \ldots ,t_k) = t_1-1$, so we {have} to restrict to
homogeneous polynomials. {This} is mostly an aesthetic choice: if $\{v_1 = 1,v_2, \ldots ,v_k\}$ is
a basis of $V$, then {the homogeneous} algebraic independence of $V$ is equivalent
to {the usual} algebraic independence of $\{v_2, \ldots ,v_k\}$.

\begin{lem}\label{lem:encompassingImpliesHomAlgIndep}
    For an encompassing polynomial, its partial derivatives are homogeneously
    algebraically independent.
\end{lem}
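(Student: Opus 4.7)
The plan is to use the encompassing hypothesis to produce a basis of $V := \calD \circ f$ whose structure forces algebraic independence via a lowest-degree argument. First I would note that $V$ always contains the constant $1$: applying $\bfalpha^{\bfm_0}$ to $f$, where $\bfx^{\bfm_0}$ is any monomial with nonzero coefficient in the top degree form of $f$, returns a nonzero scalar. So I can pick a basis of $V$ of the form $\{v_1 = 1, v_2, \ldots, v_\ell\}$, where $\ell = \dim_\bbK V$, and then further replace each $v_i$ with $i \geq 2$ by $v_i - (v_i)_0$ so that these basis vectors have no constant term. Since $f$ is encompassing, the degree $\leq 1$ parts of this basis are linearly independent in $\calR_{\leq 1}$; separating the constant and degree-one pieces, this is equivalent to the linear forms $l_i := (v_i)_1$ for $i = 2, \ldots, \ell$ being $\bbK$-linearly independent in $\calR_1$. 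In particular $\ell - 1 \leq n$, so after a linear change of coordinates we may even assume $l_i = x_{i-1}$, and hence $l_2, \ldots, l_\ell$ are algebraically independent in $\calR$.

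Next I would reduce homogeneous algebraic independence of the full basis $\{1, v_2, \ldots, v_\ell\}$ to usual algebraic independence of $\{v_2, \ldots, v_\ell\}$, as already indicated in the paper's discussion preceding the lemma. Explicitly, if $p \in \bbK[t_1, \ldots, t_\ell]$ is homogeneous of degree $d$ with $p(1, v_2, \ldots, v_\ell) = 0$, then the dehomogenization $q(t_2, \ldots, t_\ell) := p(1, t_2, \ldots, t_\ell)$ satisfies $q(v_2, \ldots, v_\ell) = 0$, and $q \neq 0$ whenever $p \neq 0$ since the dehomogenization map on degree-$d$ homogeneous polynomials is a bijection onto polynomials of degree $\leq d$.

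Finally, I would argue that $v_2, \ldots, v_\ell$ are algebraically independent. Suppose to the contrary that some nonzero $q \in \bbK[t_2, \ldots, t_\ell]$ vanishes on them, and decompose $q = q_e + q_{e+1} + \cdots$ into homogeneous components, where $q_e \neq 0$ is the component of smallest degree $e$. Since each $v_i$ (for $i \geq 2$) has lowest degree form equal to $l_i$ in degree exactly one, each monomial $v_2^{a_2} \cdots v_\ell^{a_\ell}$ with $\sum a_j = e$ has lowest degree form $l_2^{a_2} \cdots l_\ell^{a_\ell}$ in degree $e$, while monomials of higher $t$-degree contribute only to parts of $x$-degree strictly greater than $e$. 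Hence the lowest degree part of $q(v_2, \ldots, v_\ell)$ is $q_e(l_2, \ldots, l_\ell)$, which must vanish. By the algebraic independence of $l_2, \ldots, l_\ell$ established in the first step, this forces $q_e = 0$, contradicting the choice of $e$.

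The argument is essentially bookkeeping, so there is no major obstacle; the key insight is simply that encompassing supplies a basis of $V$ whose lowest-degree behavior is controlled by $\ell - 1$ linearly independent linear forms, and the general algebraic independence of $v_2, \ldots, v_\ell$ is then squeezed out of them by taking lowest-degree parts. The only place one must be careful is the shift between the statement (for any basis) and the proof (for a specific basis), but this is harmless since (homogeneous) algebraic independence of a spanning set is a property of the subspace itself.
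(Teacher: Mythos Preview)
Your proof is correct and follows essentially the same approach as the paper: choose a basis of $\calD\circ f$ containing $1$ and elements with zero constant term, use the encompassing hypothesis to get linearly independent linear parts, dehomogenize a putative relation, and reach a contradiction by examining the lowest-degree part. The differences from the paper's proof are purely cosmetic (indexing conventions and the explicit verification that $1\in\calD\circ f$).
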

\begin{proof}
    Fix a basis $h_1, \ldots , h_{\ell}$ of the space of partials, where
    $h_{\ell}
    = 1$ and $h_1, \ldots , h_{\ell-1}$ have zero constant term. By assumption,
    their linear terms are linearly independent, so that, {possibly} after a
    linear change, we have $h_i \equiv x_i\bmod \calR_{\geq 2}$ for $i=1,2,
    \ldots ,\ell-1$. Assume that these partials are homogeneously
    algebraically dependent and let $\Phi$ be a homogeneous polynomial of
    degree $d$ such that
    \[
        \Phi(h_1, \ldots ,h_{\ell-1}, h_{\ell}) = 0.
    \]
    Let $\varphi \coloneq \Phi|_{h_{\ell}=1}$, then $\varphi(h_1, \ldots
    ,h_{\ell-1}) = 0$. Let $d'$ be the smallest natural number such that
    $\varphi_{d'} \neq 0$. Then
    \[
        0 = \varphi_{d'}\bigl(\ldf(h_1), \ldots ,\ldf(h_{\ell-1})\bigr) =
        \varphi_{d'}(x_1, \ldots ,x_{\ell-1}),
    \]
    so $\varphi_{d'}$ is the zero polynomial, which is a contradiction.
\end{proof}

\begin{prop}\label{prop:encompassingImpliesMaximalGrowth}
    Let $f$ be an encompassing polynomial, let $\ell \coloneq \dim_{\bbK} \Ap(f)$ and suppose that
    \[
        \ldf(\calD\circ f) = \langle l_0 = 1, l_1, \ldots , l_{\ell-1}\rangle
    \]
    for linearly independent linear forms $l_1, \ldots , l_{\ell-1}$. Then for
    every $d\geq 1$ we have
    \[
        \ldf(\calD\circ f^d) = \bbK[l_1, \ldots ,l_{\ell-1}]_{\leq d}.
    \]
    In particular, \[\dim_{\bbK} (\calD\circ f^d) = \binom{\ell+d-1}{d}.\]
\end{prop}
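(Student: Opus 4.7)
My plan is to establish the identity $\calD\circ f^d = \calF^d$ and then show $\ldf(\calF^d) = \bbK[l_1,\ldots,l_{\ell-1}]_{\leq d}$ by a dimension argument, where $\calF := \calD\circ f$ and $\calF^d\subseteq\calR$ is the $d$-fold product (as already introduced in the section).

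First I would observe that $\calD\circ f^d\subseteq \calF^d$ (which is noted in the section before \autoref{prop:inequalityOnDuals}) from the fact that $\calF^d$ is a $\calD$-submodule of $\calR$ (by the Leibniz rule, since $\calF$ itself is closed under derivations) and it contains $f^d$. Next, by \autoref{lem:encompassingInDualVariables} the encompassing hypothesis gives $\calF = \bbK\cdot f + \calD_1\circ f$, so $\calF^d$ is spanned by products of the form $f^{d-k}(\delta_1\circ f)\cdots(\delta_k\circ f)$ with $\delta_j\in\calD_1$ and $0\leq k\leq d$. I would prove by induction on $k$ that every such product lies in $\calD\circ f^d$, using the iterated Leibniz expansion
\[
    \delta_1\cdots\delta_k\circ f^d = \frac{d!}{(d-k)!}\,f^{d-k}(\delta_1\circ f)\cdots(\delta_k\circ f) + R,
\]
where the ``leading'' term aggregates the injective assignments of the $k$ operators to distinct factors of $f^d$, and the remainder $R$ collects the non-injective assignments. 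Every summand of $R$ is a $d$-fold product of elements of $\calF$ with strictly more than $d-k$ factors equal to $f$; after using $\calF = \bbK f + \calD_1\circ f$ to expand any higher-order partial appearing in $R$, the induction hypothesis (applied at indices strictly smaller than $k$) places $R\in\calD\circ f^d$, and rearranging yields $f^{d-k}(\delta_1\circ f)\cdots(\delta_k\circ f)\in\calD\circ f^d$.

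With $\calD\circ f^d = \calF^d$ in hand, it remains to identify $\ldf(\calF^d)$. Choose a basis $g_0,g_1,\ldots,g_{\ell-1}$ of $\calF$ with $\ldf(g_0)=1$ and $\ldf(g_i)=l_i$ for $i\geq 1$. Since $\ldf$ is multiplicative on nonzero products in $\calR$, each of the $\binom{\ell+d-1}{d}$ symmetrized products $g_{i_1}\cdots g_{i_d}$ has lowest-degree form $l_{i_1}\cdots l_{i_d}$, which ranges over every monomial in $l_1,\ldots,l_{\ell-1}$ of degree at most $d$ as the multi-indices vary. The linear independence of $l_1,\ldots,l_{\ell-1}$ forces these monomials to be linearly independent in $\calR$, so $\ldf(\calF^d)\supseteq \bbK[l_1,\ldots,l_{\ell-1}]_{\leq d}$, a space of dimension $\binom{\ell+d-1}{d}$. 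On the other hand, $\calF^d$ is spanned by those $\binom{\ell+d-1}{d}$ products, so $\dim \calF^d\leq \binom{\ell+d-1}{d}$, and since $\dim \ldf(\calF^d)=\dim \calF^d$ for any subspace of $\calR$, the dimension bounds match and force $\ldf(\calF^d)=\bbK[l_1,\ldots,l_{\ell-1}]_{\leq d}$, yielding both the desired identification and the dimension formula.

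The hard part will be the bookkeeping in the inductive step: one must carefully verify that every term of the remainder $R$ really does have at most $k-1$ factors drawn from $\calD_1\circ f$ after rewriting the higher-order partials $(\delta_{i_1}\delta_{i_2}\cdots)\circ f$ via $\calF = \bbK f + \calD_1\circ f$, so that the induction hypothesis can be applied strictly below $k$.
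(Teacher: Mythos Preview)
Your argument is correct and takes a genuinely different route from the paper's proof.

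The paper argues by contradiction on the dual side: after reducing to the case $\ell=n+1$ and choosing coordinates so that $\alpha_n\circ f=1$, it supposes there is some $\Phi\in\Ann(f^d)$ of degree $d'\leq d$, expands $\Phi\circ f^d$ modulo $f^{d-d'+1}$ via Leibniz, and obtains a homogeneous polynomial relation among $\alpha_1\circ f,\ldots,\alpha_n\circ f$, contradicting \autoref{lem:encompassingImpliesHomAlgIndep}. The reverse inequality is then read off from \autoref{prop:inequalityOnDuals}.

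You instead prove the equality $\calD\circ f^d=\calF^d$ directly. The key ingredient is the same Leibniz expansion, but used constructively: the identity $\calF=\bbK f+\calD_1\circ f$ (which is exactly \autoref{lem:encompassingInDualVariables}) lets you rewrite every non-injective term in the expansion of $(\delta_1\cdots\delta_k)\circ f^d$ as a combination of products with strictly fewer first-order factors, so the induction closes. Your identification of $\ldf(\calF^d)$ via the multiplicativity of $\ldf$ and the linear independence of $l_1,\ldots,l_{\ell-1}$ then gives both inclusions at once. This bypasses the special coordinate $x_n$, the appeal to \autoref{lem:encompassingImpliesHomAlgIndep}, and the separate invocation of \autoref{prop:inequalityOnDuals}; in effect you are proving that both arrows in diagram~\eqref{eq:connections} are isomorphisms simultaneously. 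The bookkeeping you flag as ``the hard part'' is indeed routine once one notes that a non-injective $\phi\colon\{1,\ldots,k\}\to\{1,\ldots,d\}$ has image of size $r<k$, so after rewriting each higher-order factor via $\calF=\bbK f+\calD_1\circ f$ every resulting monomial has at most $r<k$ factors from $\calD_1\circ f$.
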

\begin{proof}
    The case $\deg(f) = 0$ is trivial, and in the following we assume $\deg(f)\geq 1$.
    {First, we simplify the notation.}
    Change the coordinates in $\calD = \bbK[\alpha_1, \ldots , \alpha_n]$ so that
    the kernel of $\calD_{\leq 1}\to \Ap(f)$ is generated by
    $\alpha_{\ell}, \ldots ,\alpha_n$. Then $f$ {is} in the subring $\bbK[x_1, \ldots
    ,x_{\ell-1}]$. We restrict to this subring, so that $n$ becomes $\ell-1$
    and \[\langle l_1, \ldots , l_{\ell-1} \rangle = \langle x_1, \ldots
    ,x_{n}\rangle.\]
    We make another {important} observation. Since $\calD_{\leq 1}$ surjects
    onto $\Ap(f)$, there is a linear form $\sigma\in \calD_{\leq 1}$ such that
    $\sigma\circ f = 1$. Since \[\deg(\sigma\circ f) = 0 < \deg(f),\] we have
    $\sigma\in \calD_{1}$. By changing the coordinates we can assume $\sigma = \alpha_n$.
    We have
    \[
        \alpha_n\circ(\calD_{\geq 1} \circ f) = \calD_{\geq 1} \circ (\alpha_n
        \circ f) = 0,
    \]
    so no polynomial in $\calD_{\geq 1}\circ f$ depends on $x_n$.
    We want to prove that
    \[
        \ldf( \calD\circ f^d) = \bbK[x_1, \ldots
        ,x_{n}]_{\leq d}.
    \]
    First we prove the inclusion "$\supseteq$". Assuming that this is not the case, we have by \autoref{lem:ann_ldf(M)=tdf(I)} a polynomial $\Phi\in \Ann(f^d)$ of degree at most $d$.
    Let $d' = \deg(\Phi)$ and let $\Phi_{d'}$ be its top
    degree form.
    \newcommand{\oursum}{\sum_{\bfa} \lambda_{\bfa} (\alpha_1\circ f)^{\bfa_1}
    \cdots (\alpha_{n}\circ f)^{\bfa_n}}
    For any $\bfa\in \mathbb{N}^{n}$, we write $\alpha^{\bfa} :=
    \alpha_1^{\bfa_1} \cdots \alpha_{n}^{\bfa_{n}}$.
    Take any monomial $\alpha^{\bfa}$ of degree $d'$. Then
    \begin{equation}\label{eq:action}
        \alpha^{\bfa} \circ f^d = \left( \alpha_1^{\bfa_1}  \cdots
        \alpha_{n}^{\bfa_{n}} \right)\circ f^d \equiv
        \frac{d!}{(d-d')!}(\alpha_{1}\circ f)^{\bfa_1}  \cdots (\alpha_{n}\circ
        f)^{\bfa_{n}}\cdot f^{d-d'} \bmod{f^{d-d'+1}}.
    \end{equation}
    If instead $\alpha^{\bfa}$ has degree strictly less than $d'$, then
    \[
    \alpha^{\bfa} \circ f^d \equiv 0 \bmod{f^{d-d'+1}}.
    \]
    Suppose that $\Phi_{d'} = \sum_{\bfa}
    \lambda_{\bfa}\alpha^{\bfa}$.
    {By using~\eqref{eq:action} repeatedly,} we get
    \[
        0 = \Phi\circ f^{d} \equiv \Phi_{d'}\circ f^{d} \equiv \frac{d!}{(d-d')!}\oursum f^{d-d'}\bmod{f^{d-d'+1}}.
    \]
    It follows that the sum
    \begin{equation}\label{eq:oursum}
        \oursum
    \end{equation}
    is a multiple of $f$. However, the sum does not depend on $x_n$, while {every}
    non-zero element of
    $\calR\cdot f$ depends on $x_n$. We obtain that the
    sum~\eqref{eq:oursum} is zero. By
    \autoref{lem:encompassingImpliesHomAlgIndep} this implies $\Phi_{d'}\equiv
    0$, which is a contradiction. This proves that $\Phi$ does not exist and hence we {obtain one inclusion}. The equality then follows by  \autoref{prop:inequalityOnDuals}, which completes the proof.
\end{proof}

\begin{proof}[Proof of \autoref{ref:intro:growth}]
    {We begin with a preliminary remark.}
    Since $f$ is
    concise, the map $\calD_{1}\to \Ap(f)$ is injective. The image of this map
    lies in the maximal ideal of $\Ap(f)$, {so $\calD_{\leq 1}\to
    \Ap(f)$ is also injective.} It follows that
    \begin{equation}\label{eq:manyPartials}
        n+1 = \dim_{\bbK} \calD_{\leq 1}\leq \dim_{\bbK} \Ap(f) = \ell.
    \end{equation}
   Now we begin the proof of {equivalences.}
    \autoref{prop:encompassingImpliesMaximalGrowth}
    and
    \autoref{lem:encompassingImpliesHomAlgIndep}
    prove that
    \ref{it:growthFour} implies~\ref{it:growthOne} and~\ref{it:growthThree},
    respectively.
    Let us prove that \ref{it:growthOne} implies~\ref{it:growthThree}. For
    every $d$, let $\calP_d\subseteq \calR$ denote the linear space spanned by products
    $\prod_{i=1}^d(\sigma_i\circ f)$ over all $\sigma_{i}\in \calD$.
    We have a natural surjection $S^{d} (\calD\circ f)\to \calP_d$ and an
    inclusion $\calD\circ f^d\subseteq \calP_d$, see~\eqref{eq:containment}.
    Therefore
    \[
        \binom{\ell+d-1}{d} = \dim_{\bbK} S^d (\calD\circ f) \geq \dim_{\bbK} \calP_d
        \geq \dim_{\bbK} (\calD\circ f^d).
    \]
    By assumption~\ref{it:growthOne}, {equalities} must hold. But the first
    equality shows that the map $S^d(\calD \circ f) \to \calP_d$ is an
    isomorphism, which means that the partials of $f$ {do not satisfy any} homogeneous
    polynomial of degree $d$. Iterating over all $d$, we
    obtain~\ref{it:growthThree}.

    Now we prove that \ref{it:growthThree} implies~\ref{it:growthFour}. This
    is perhaps the most surprising part, because it follows easily and yet gives a
    strong characterization. Since the partials of $f$ are homogeneously
    algebraically independent, there are at most $n+1$ of them, so $\dim_{\bbK}
    \Ap(f)\leq n+1$. By~\eqref{eq:manyPartials} we have equality $\dim_{\bbK} \Ap(f)
= n+1$ and
    additionally {we get} that
    the image of $\calD_{\leq 1}$ spans $\Ap(f)$.
    By
    \autoref{lem:encompassingInDualVariables}, this proves that $f$ is
    encompassing, hence~\ref{it:growthFour}.

    Finally, let us prove that \ref{it:growthTwo} is equivalent to~\ref{it:growthThree}.
    Assume~\ref{it:growthTwo}.
    By~\eqref{eq:manyPartials} we have $\ell \geq n+1$.
        By~\ref{it:growthTwo}, we have $\ell-1 \leq n$. Joining these, we get $\ell = n+1$. Let
    $b_1, \ldots ,b_\ell$ be a basis of the
    space of partials of $f$, where \[b_i \coloneq \frac{\partial f}{\partial
        \bfx^{\bfa}_i}\] for $i=1,2, \ldots ,\ell-1$ and $b_{\ell} = 1$.
        Suppose that $b_{1}, \ldots , b_{\ell}$ satisfy some nonzero
        homogeneous polynomial $\Phi$. Then $\Phi|_{b_{\ell} = 1}$ is
        {not constant}. This polynomial shows that
        $b_1, \ldots , b_{\ell-1}$ are algebraically dependent. But this means
        that the subfield
        \begin{equation}\label{eq:fieldExtension}
            \bbK(b_1, \ldots ,b_{\ell-1})\subseteq \bbK(x_1, \ldots
            ,x_{n})
        \end{equation}
        has transcendence degree at most $\ell-2$.
        However, by assumption~\ref{it:growthTwo}, the field
        extension~\eqref{eq:fieldExtension} is finite, so both fields have
        transcendence degree $n = \ell-1$. This contradiction
        proves~\ref{it:growthThree}.
        The proof that \ref{it:growthThree} implies \ref{it:growthTwo} is
        analogous.
\end{proof}

\subsection{Encompassing polynomial from any
polynomial}\label{sec:everyPolynomialCanBeEncompassing}

In this section we discuss how to construct an encompassing polynomial from any
polynomial and prove \autoref{ref:intro:universalityTheorem}. We keep the
notation as in the theorem, so
we {are dealing} with polynomial rings
in $k$ variables, rather than in $n$ variables.
Take a nonhomogeneous \emph{concise} polynomial $f\in \bbK[x_1, \ldots ,x_k]$ with
$n\coloneq\dim_{\bbK} \Ap(f)$ and fix
{elements}
\[
    1,\alpha_1, \ldots ,\alpha_k, \sigma_1, \ldots ,\sigma_{n-k-1}\in \calD= \bbK[\alpha_1, \ldots ,\alpha_k]
\]
restricting to a basis of $\Ap(f)$. Let $I' \coloneq \Ann(f)\subseteq \calD$ be the
annihilator of $f$.

\newcommand{\lcalR}{\hat{\calR}}%
\newcommand{\lcalD}{\hat{\calD}}%
Let us now introduce the enlarged rings
\[
    \lcalR = \bbK[x_1,\dots,x_k,y_1,\dots,y_{n-k-1}], \qquad
    \lcalD = \bbK[\alpha_1,\dots,\alpha_k,\beta_1,\dots,\beta_{n-k-1}].
\]
We define a ring automorphism $\varphi\colon\lcalD\to\lcalD$, such that
\[
\varphi(\alpha_i)=\alpha_i,\quad \varphi(\beta_j)=\beta_j-\sigma_j,
\]
for every $i=1,\dots,k$ and $j=1,\dots,n-k-1$ and consider the ideal
\[
I \coloneq I'\cdot \lcalD + (\beta_1 - \sigma_1,\dots,\beta_{n-k-1} -
\sigma_{n-k-1})=\varphi\bigl(\Ann(f)\bigr).
\]

\begin{lem}\label{lem:summary_of_coordinate_change}
    The ideal $I$ is the annihilator of an encompassing polynomial $g$ equal
    to
    \[
        g=\sum_{\bfa\in \mathbb{Z}_{\geq 0}^{n-k-1}} \frac{\bfy^{\bfa}}{\bfa!}(\sigma^{\bfa}\circ f).
    \]
    Moreover, the apolar algebras $\Ap(g)$ and $\Ap(f)$ are isomorphic.
\end{lem}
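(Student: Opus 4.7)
The plan is to establish three claims in sequence: first, $I \subseteq \Ann(g)$; second, $\dim_{\bbK}\Ap(g) \geq n$; and third, that the image of $\hat{\calD}_{\leq 1}$ spans $\Ap(g)$. Before starting I would note that $\varphi$ is a $\bbK$-algebra automorphism of $\hat{\calD}$ carrying the ideal $\Ann(f)\hat{\calD} + (\beta_1, \ldots, \beta_{n-k-1})$ bijectively onto $I$; since the quotient by the former is manifestly $\calD/\Ann(f) = \Ap(f)$, this gives $\hat{\calD}/I \cong \Ap(f)$ and in particular $\dim_{\bbK}\hat{\calD}/I = n$. Once the first claim is in hand this will yield a surjection $\hat{\calD}/I \twoheadrightarrow \Ap(g)$, and the second will force equality of dimensions, delivering both $\Ann(g) = I$ and $\Ap(g) \cong \Ap(f)$; the third claim together with \autoref{lem:encompassingInDualVariables} will then conclude that $g$ is encompassing.

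For the first claim I would check the two types of generators of $I$ separately. For $\tau \in \Ann(f) \subseteq \calD$, the element $\tau$ does not involve the $\bfy$-variables and commutes with each $\sigma^{\bfa}$, so distributing $\tau$ across the defining sum of $g$ reduces every summand to $\sigma^{\bfa}\circ(\tau\circ f) = 0$. For the generator $\beta_j - \sigma_j$, a re-indexing $\bfa \mapsto \bfa + e_j$ of the sum (absorbing the factor of $\bfa_j$ coming from $\beta_j \circ \bfy^{\bfa}$ into $\bfa!$ to obtain $(\bfa-e_j)!$) shows that $\beta_j \circ g = \sigma_j \circ g$. This identity is precisely what the normalization by $\bfa!$ in the definition of $g$ was designed to deliver, and it is the only step where the particular Taylor-series shape of $g$ is genuinely used.

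For the second claim, the key move is specialization at $\bfy = 0$, which kills every term of the defining sum of $g$ except the one with $\bfa = 0$. This yields
\[
g|_{\bfy=0} = f,\qquad (\alpha_i \circ g)|_{\bfy=0} = \alpha_i\circ f,\qquad (\beta_j \circ g)|_{\bfy=0} = \sigma_j\circ f,
\]
which are the $n$ elements of the fixed basis of $\calD\circ f$. Their linear independence in $\bbK[\bfx]$ forces the corresponding $n$ partials of $g$ to be linearly independent in $\hat{\calD}\circ g$, so $\dim_{\bbK}\Ap(g) \geq n$. All $n$ of these independent partials arise from the action of $\hat{\calD}_{\leq 1}$, so once the dimensions match their classes in $\Ap(g)$ simultaneously span the algebra and witness the third claim, allowing \autoref{lem:encompassingInDualVariables} to be applied.

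I do not anticipate any serious obstacle. The only delicate computation is the index shift for $\beta_j - \sigma_j$ in the first step, which is routine but requires careful bookkeeping with the factorials; the remainder of the argument is a formal dimension count combined with the transparent evaluation at $\bfy = 0$.
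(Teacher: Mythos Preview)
Your proof is correct and in fact more self-contained than the paper's. The paper argues abstractly: it observes that $\varphi$ induces an isomorphism $\hat{\calD}/I \cong \Ap(f)$, then invokes Macaulay's inverse systems to conclude that $I = \Ann(g)$ for \emph{some} $g$, and only at the very end cites an external result (\cite{Jel17}*{Proposition~2.12}) for the explicit Taylor-series expression. You instead verify by hand that the given $g$ is annihilated by each generator of $I$ (the index-shift computation for $\beta_j - \sigma_j$ is exactly right), and then close the gap with the dimension count. For the encompassing property, the paper transports the linear independence of $1,\alpha_i,-\sigma_j$ in $\Ap(f)$ through $\varphi$ to get linear independence of $1,\alpha_i,\beta_j$ in $\hat{\calD}/I$, whereas you extract it directly from the $\bfy=0$ specialization of the partials of $g$. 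Your route avoids both external references at the cost of one routine computation; the paper's route is terser but leans on the literature for the explicit formula.
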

\begin{proof}
    The algebras $\lcalD/I$ and $\lcalD/\Ann(f) \simeq \Ap(f)$ are isomorphic
    via $\varphi$, so by Macaulay's inverse systems (see \cite{IK99}*{Appendix~A}), we get that $I = \Ann(g)$ for
    some non-unique $g$ and $\Ap(g) \simeq \Ap(f)$. The classes of the elements \[
    1,\alpha_1, \ldots ,\alpha_k,
    -\sigma_1, \ldots ,-\sigma_{n-k-1}\]
     in $\lcalD/\Ann(f)$ are by definition linearly
    independent. Since every $\beta_j$ annihilates $f$, these
    classes coincide with the classes of
    \[
        1,\alpha_1, \ldots ,\alpha_k, (\beta_1 - \sigma_1), \ldots
        ,(\beta_{n-k-1} - \sigma_{n-k-1}).
    \]
    But these are the images under $\varphi$ of the classes of $1,
    \alpha_1, \ldots ,\alpha_k, \beta_1, \ldots ,\beta_{n-k-1}$. Therefore,
    the classes of $\hat{\calD}_{\leq 1}$ are linearly independent, so $g$ is
    encompassing by \autoref{lem:encompassingInDualVariables}. Finally, the
    explicit expression for $g$, again non-unique, is given
    by~\cite{Jel17}*{Proposition 2.12}.
\end{proof}

\begin{exam}
    Let \[f \coloneqq x_1^4 + x_2^4+\cdots+x_n^4.\] The Hilbert function of $\Ap(f)$ is
    $(1,n,n,n,1)$ and we can take
    \[ \sigma_i \coloneqq \alpha_i,\quad \sigma_{n+i} \coloneqq \frac{1}{12}\alpha_i^2,\quad \sigma_{2n+i} \coloneqq
    \frac{1}{24}\alpha_i^3\] for every $i=1,2, \ldots ,n$, and
    \[ \sigma_{3n+1} \coloneqq \frac{1}{24}\alpha_1^4.\]
Then we have
\[ g = f(x_1+y_1, \ldots ,x_n+y_n) + \sum_{i=1}^{n} y_{n+i}(y_{n+i} +
        x_i^2) + \sum_{i=1}^n y_{2n+i} x_i + y_{3n+1}.
\]
Unlike the examples in the introduction, here $g$ has parts which
are quadratic in the $y$-variables.
\end{exam}

\begin{proof}[Proof of \autoref{ref:intro:universalityTheorem}]
    Let a concise $F\in \bbK[x_0, \ldots ,x_k]$ and let $f =
    F|_{x_0=1}$ is its dehomogenization. Let $g$ be an encompassing polynomial
    obtained from $f$ as above and let $G$ be its homogenization {multiplied} by a power of $x_0$ so that $\deg G=\deg F$ and \[G
    \in\bbK[x_0, \ldots ,x_k,y_1, \ldots ,y_{n-k-1}] \simeq \bbK[x_0, \ldots
    ,x_n].\]
    The polynomial $g$ restricts to $f$ by setting $y_{\bullet} = 0$, so {also
    $G$} restricts to $F$ by setting $y_{\bullet} = 0$.
    We now observe that the homogeneous polynomial $G$ satisfies the assumptions of
    \autoref{ref:intro:mainThmForms}.  The algebra $\Ap(g)$ is smoothable,
    since it is isomorphic to $\Ap(f)$, and the polynomial $g$ is
    encompassing, see \autoref{lem:summary_of_coordinate_change} for both
    claims. This {concludes} this part of the proof.

  Fix any $d$. From \autoref{ref:intro:mainThmForms} it follows that the
  \[
      \smrk\bigl(\twist{G^d}\bigr) \leq\binom{n+d}{d},
  \]
  so we also have by~\eqref{rel_border_rank_lower_smoothable_rank} that
    \[
        \brk\bigl(\twist{G^d}\bigr)\leq\binom{n+d}{d}.
    \]
    Since $\twist{F^d}$ is a
    restriction of $\twist{G^d}$ and border rank does not increase under restriction,
    we also get \[\brk\bigl(\twist{F^d}\bigr)\leq\binom{n+d}{d}.\]
    Finally, the explicit form~\eqref{eq:exponent} was already noted in
    \autoref{lem:summary_of_coordinate_change}.
\end{proof}

\subsection{Almost encompassing polynomials}\label{sec:almostEncompassing}

\begin{defn}
    A polynomial $f$ is \emph{almost encompassing} if $f_{\leq 1} = 0$ but for
    every nonzero element $g$ of $\calD_{\geq 1}\circ f$ we have $g_{\leq 1} \neq 0$.
\end{defn}
For example, {every} quadratic form is an almost encompassing polynomial. In
general, for any encompassing polynomial $f$, the polynomial $f_{\geq 2}$ is
almost encompassing. This shows that {many} examples of almost encompassing
polynomials can be produced using the method
{of}~\S\ref{sec:everyPolynomialCanBeEncompassing}.

In this section we show that for an almost encompassing polynomial $f$, {its $\deg(f)$-th power does not have the maximal dimension of the apolar algebra}, i.e.,
\[
\dim_{\bbK} \Ap(f^{\deg f})<\binom{\deg f-1+\dim_{\bbK}\Ap(f)}{\deg f}.
\]
We conclude with a similar statement for arbitrary polynomials.
\begin{teo}\label{ref:almostCompassing:teo}
    Let $f$ be a non-constant polynomial with zero linear part: $f_1 = 0$. Then $f$ is equal to a
    polynomial of degree $\deg f$ in the variables $\calD_{\geq 1}\circ f$.
\end{teo}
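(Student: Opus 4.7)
The plan is to show that the almost encompassing hypothesis forces $\deg f\leq 2$, which reduces the claim to the elementary observation that a nondegenerate quadratic form is a polynomial of degree two in its first partial derivatives. We may assume $f\neq 0$; since $f_{\leq 1}=0$ this forces $d\coloneq\deg f\geq 2$. By restricting to the minimal polynomial subring in which $f$ lives, we may further assume $f$ is concise, and this reduction preserves both the almost encompassing condition and the space $V\coloneq\calD_{\geq 1}\circ f$. Throughout the argument, $n$ denotes the number of variables after this reduction, $A\coloneq\Ap(f)$, and $\mathfrak{m}\subset A$ is the maximal ideal.

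First I would establish the upper bound $\dim_{\bbK}A\leq n+2$. By definition, almost encompassing means that the map $V\to\calR_{\leq 1}$, $g\mapsto g_{\leq 1}$, is injective, so $\dim V\leq n+1$. Because every element of $V$ has degree at most $d-1<d$, the decomposition $\calD\circ f=\bbK\cdot f\oplus V$ is direct, and \eqref{eq:apolarIso} yields $\dim A=1+\dim V\leq n+2$.

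Next I would exploit the Hilbert function of the associated graded algebra $\mathrm{gr}\,A=\bigoplus_{k\geq 0}\mathfrak{m}^{k}/\mathfrak{m}^{k+1}$ to deduce $d\leq 2$. Since $A$ is generated as a $\bbK$-algebra by the image of $\calD_{1}$, the graded algebra $\mathrm{gr}\,A$ is standard graded, so $(\mathrm{gr}\,A)_{k+1}=(\mathrm{gr}\,A)_{1}\cdot(\mathrm{gr}\,A)_{k}$ for every $k\geq 0$. Writing $h_k\coloneq\dim_{\bbK}(\mathrm{gr}\,A)_k$, conciseness of $f$ gives $h_1\geq n$; the one-dimensional Gorenstein socle $(0\colon\mathfrak{m})\subseteq A$ equals $\mathfrak{m}^{d}$, so $h_d\geq 1$; and iterating the generation identity backwards from $h_d\geq 1$ forces $h_k\geq 1$ for every $1\leq k\leq d$. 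Summing yields
\[
\dim A=\sum_{k=0}^{d}h_k\geq 1+n+(d-1)=n+d,
\]
which combined with the previous bound gives $d\leq 2$, hence $d=2$.

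Once $d=2$ is established, the conclusion is classical. Since $f_{\leq 1}=0$ and $\deg f=2$, the polynomial $f$ is a homogeneous quadratic form with symmetric Hessian $H$, which is invertible by conciseness. The first partials $\partial_i f=(Hx)_i$ lie in $V$ and form a basis of $\calR_1$, so that
\[
f=\tfrac{1}{2}\sum_{i,j}(H^{-1})_{ij}(\partial_i f)(\partial_j f)
\]
exhibits $f$ as a polynomial of degree $2=\deg f$ in elements of $V$, as required. The main obstacle I expect is the Hilbert function analysis in the third paragraph: one must carefully justify that the Gorenstein socle of the nonhomogeneous local algebra $\Ap(f)$ is the top nonzero Loewy layer $\mathfrak{m}^{d}$, and that standard-graded generation of $\mathrm{gr}\,A$ really propagates the nonvanishing of $h_d$ backwards through the entire Hilbert function.
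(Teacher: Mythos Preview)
Your argument has a genuine gap: the claim that conciseness of $f$ forces $h_1\geq n$ is false when $f$ is not homogeneous. Conciseness says that no \emph{homogeneous} linear form lies in $\Ann(f)$, i.e.\ $\Ann(f)\cap\calD_1=0$. But $h_1=\dim\mathfrak{m}/\mathfrak{m}^2$ is governed by the \emph{initial} (lowest-degree) forms of $\Ann(f)$: one has $\mathrm{gr}_{\mathfrak{m}}A\simeq\calD/\ldf(\Ann f)$, so $h_1=n-\dim_{\bbK}\ldf(\Ann f)_1$. The annihilator of a nonhomogeneous $f$ can contain elements of the shape $\alpha_i-\sigma$ with $\sigma\in\calD_{\geq 2}$, whose initial form is linear, and these depress $h_1$ below $n$ without violating conciseness.

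A concrete counterexample to your conclusion $d\leq 2$ is
\[
f \;=\; x_1^3 + x_1x_2 \in \bbK[x_1,x_2].
\]
Here $f_{\leq 1}=0$, the space $\calD_{\geq 1}\circ f=\langle 3x_1^2+x_2,\; x_1,\; 1\rangle$ has linearly independent degree~$\leq 1$ parts $x_2,x_1,1$, so $f$ is almost encompassing; and $f$ is concise in two variables. Yet $\deg f=3$. What happens is that $\alpha_2-\tfrac{1}{6}\alpha_1^2\in\Ann(f)$, so $\bar\alpha_2\in\mathfrak{m}^2$, the algebra $\Ap(f)\simeq\bbK[\alpha]/(\alpha^4)$ has Hilbert function $(1,1,1,1)$, and $h_1=1<2=n$. (One checks directly that $f=3v_1v_2-2v_2^3$ with $v_1=3x_1^2+x_2$, $v_2=x_1$, confirming the theorem for this $f$.) The paper's proof is built precisely around this phenomenon: the elements $\alpha_i-\sigma_i\in\Ann(f)$ with $\sigma_i\in\calD_{\geq j}$ are used to define a ``custom grading'' under which the substitution $x_i\mapsto(\text{polynomial in lower partials})$ does not increase degree. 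Your dimension count is salvageable only under an extra hypothesis such as $\ldf(\Ann f)_1=0$, which is not implied by almost encompassing.
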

\begin{proof}
    \def\lin{\mathcal{L}}%
    Take $f\in\calR$. After a possible coordinate change and
    lowering the $n$, we can assume that there is no linear form in the variables $\alpha_1, \ldots
    ,\alpha_n$ annihilating $f$.
    For every $j\geq 2$ define a linear subspace of $\calD_{1}$ by the formula
    \[
        \lin_{\geq j} = \Set{ \beta\in \calD_{1} |  \exists\, \sigma\in
        \calD_{\geq j} : \beta\circ f = \sigma \circ
    f} = \calD_{1}\cap \bigl(\Ann(f) + \calD_{\geq j}\bigr).
    \]
    We have inclusions
    \[ \lin_{\geq 2}\supseteq \lin_{\geq 3} \supseteq
    {\cdots} \supseteq \lin_{\geq j} \supseteq
    {\cdots}, \] so after another coordinate change we may assume
    that for every $j$ we have an $n_j$ such that
    \begin{equation}\label{eq:basisChoice}
        \lin_{\geq j} = \langle \alpha_{n_j+1}, \ldots
        ,\alpha_n\rangle
    \end{equation}
    Taking $k \coloneqq n_2$ equality~\eqref{eq:basisChoice} for $j = 2$ shows that
    \[
        \Ann(f) \subseteq \langle \alpha_{k+1}, \ldots
        ,\alpha_n\rangle + \calD_{\geq 2}.\]
        Dualizing this containment, we
        obtain
        \[
            \langle 1, x_1, \ldots ,x_k\rangle = \left(\langle \alpha_{k+1}, \ldots
            ,\alpha_n\rangle + \calD_{\geq 2}\right)^{\perp} \subseteq \bigl(
            \Ann(f) \bigr)^{\perp} = \calD\circ f,
        \]
        so $x_1, \ldots ,x_k$ are partials of $f$. Additionally, it follows
        from~\eqref{eq:basisChoice} and Nakayama's
        Lemma that the classes of $\alpha_1, \ldots ,\alpha_k$ generate the
        algebra $\Ap(f)$.

        We now introduce an auxiliary grading on $\calR$.
        Namely, {letting} $n_1 = 0$, for every $i = 1 \ldots n$ there is a unique
        $j$ such that $n_j < i \leq n_{j+1}$. We take the degree of $x_i$ to be $j$. For example, the degrees of $x_1, \ldots ,x_k$ are $1$.
        {We call the resulting grading on $\calR$ the \emph{custom grading} and the degree of an element the \emph{custom degree}}, which we denote by $\deg'$.

        For every $i\geq k+1$ we will now inductively generate elements $\sigma_i,
        \tau_i\in \calD$. Suppose we have already {generated} them for all $i \leq
        n_j$. To get them for $i=n_j+1, \ldots ,n_{j+1}$, we
        use~\eqref{eq:basisChoice} and fix $\sigma_i\in \calD_{\geq j}$ so
        that $\alpha_i - \sigma_i$ annihilates $f$.

        By definition of $\lin_{\geq j}$, $\lin_{\geq j+1}$, and the construction
        of~\eqref{eq:basisChoice}, no nonzero linear combination of the elements
        $\alpha_{n_j+1}, \ldots , \alpha_{n_{j+1}}$
        lies in $\Ann(f) + \calD_{\geq j+1}$.

        Since $\sigma_i \equiv \alpha_i \bmod \Ann(f)$ for $i=n_j+1,  \ldots ,
        n_{j+1}$, no nonzero linear combination of $\sigma_{n_j+1}, \ldots ,
        \sigma_{n_{j+1}}$ lies in $\Ann(f) + \calD_{\geq j+1}$.
        Using
        \autoref{ref:dualElement:lem}, we {get} $\tau_{n_j+1}, \ldots
        ,\tau_{n_{j+1}}$ {such that}
        \begin{enumerate}[label=(\arabic*), left= 3pt, widest=2,nosep]
            \item $\deg(\tau_i \circ f) \leq j$ for every
                $i=n_{j}+1, \ldots ,n_{j+1}$,
            \item $\sigma_i \circ (\tau_i \circ f) = 1$ for every
                $i=n_{j}+1, \ldots ,n_{j+1}$,
            \item $\sigma_k\circ (\tau_i \circ f) = 0$ for $n_{j}+1\leq
                i,k\leq n_{j+1}$ with $i\neq k$.
        \end{enumerate}
        Since $\sigma_i\circ f = \alpha_i \circ f$,
        the last two equations imply that
        \begin{equation}\label{eq:linearPart}
            \alpha_i\circ (\tau_i\circ f) = 1, \qquad
            \alpha_k\circ (\tau_i\circ f) = 0,
        \end{equation}
        for $n_{j}+1\leq i,k\leq n_{j+1}$ with $i\neq k$.

        {We claim that} for every $\rho\in \calD$ the custom degree $\deg'(\rho\circ
        f)$
        is equal to
        $\deg(\rho \circ f)$. It is {immediate} that \[\deg'(\rho\circ f) \geq
        \deg(\rho \circ f),\] as this holds for every monomial. We will now
        prove that \[\deg'(\rho\circ f) \leq
        \deg(\rho\circ f).\]
        It suffices to prove that
        $\rho\circ f$ is annihilated
        by every monomial
        $\alpha_1^{a_1} \ldots \alpha_n^{a_n}$ such that \[\sum a_i \deg'(x_i) >
        \deg(\rho \circ f).\]
        For every such monomial, we get
        \[
            (\alpha_1^{a_1} {\cdots} \alpha_n^{a_n})\circ\rho \circ f =
            (\alpha_1^{a_1} {\cdots} \alpha_k^{a_k} \sigma_{k+1}^{a_{k+1}} {\cdots}
            \sigma_{n}^{a_{n}}) \circ \rho \circ f.
        \]
        By definition of the custom degree, the element \[
        \alpha_1^{a_1} {\cdots} \alpha_k^{a_k} \sigma_{k+1}^{a_{k+1}} {\cdots}
        \sigma_{n}^{a_{n}}\] lies in $\calD_{\geq \sum a_i \deg'(x_i)}$ which is
        contained in $\calD_{>\deg(\rho\circ f)}$ so this element {does indeed annihilate} $\rho \circ f$. This {completes} the proof that $\deg'(\rho
        \circ f) =
        \deg(\rho\circ f)$.

        Fix an $i\in \{1, \ldots ,n\}$ and let $j$ be the custom degree of $x_i$, so
        that $n_j < i\leq n_{j+1}$.
        Consider the element $\tau_i\circ f$. By {the first bullet} above, the degree of $\tau_i\circ f$ is
        at most $j$. By the above argument, {the custom degree of
        $\tau_i\circ f$ is also at most $j$}. In particular, the polynomial $\tau_i \circ f$
        does not contain $x_{k}$ for $k > n_{j+1}$ and contains
        $x_{n_{j}+1}, \ldots ,x_{n_{j+1}}$ at most in the linear part.
        By~\eqref{eq:linearPart} the linear part contains only $x_{i}$, so in
        particular $\tau_i \circ f$ does not contain $x_k$ with $k>n_j$,
        $k\neq i$.

        We now prove that the polynomial $f$, written as a polynomial {in the variables}
        \[y_1\coloneqq x_1,\quad \ldots,\quad y_k:=x_k,\qquad y_{k+1}\coloneqq\tau_{k+1}\circ f,\quad \ldots ,\quad y_n\coloneqq\tau_n\circ f,\]
        is of degree equal to $\deg f$. Observe that since $f_1 = 0$, all
        these new variables lie in $\calD_{\geq 1} \circ f$.
     We
     subsequently replace $x_n, \ldots ,x_{k+1}$ by the corresponding
        {\[ y_n=\tau_n\circ f,\quad \ldots ,\quad y_{k+1}=\tau_{k+1}\circ f,\]}
        as follows. We know
        that for $i$ with $\deg'(x_i) = j$, the element $\tau_i\circ f$ is of
        the form
        \[
            x_i + P_i(x_1, \ldots ,x_{n_{j}})
        \]
        for a polynomial $P_i$.
        Going down from $n$ to $k+1$, we replace \[x_i \coloneqq \tau_i \circ f -P_i(x_1, \ldots ,x_{n_{j}})=y_i-P_i(x_1, \ldots ,x_{n_{j}}).\] We have $\deg'(x_i) = j$ and
        $\deg'(\tau_i \circ f) \leq j$, so \[\deg'\bigl(P_i(x_1, \ldots
        ,x_{n_{j}})\bigr)\leq j.\]
Therefore, after each substitution the custom
        degree remains at most $\deg f$, so the usual degree, where the usual degree of each $y_i$ {is $1$}, also remains at most
        $\deg f$.
        Finally, all variables in the top degree part of $f$ are in the subset
        of $x_1, \ldots ,x_k$, and these are unchanged, so the obtained
        polynomial is of degree at least $\deg f$. Hence, it has degree
        exactly $d$.
\end{proof}

\begin{cor}\label{ref:growthuptodegImpliesEncompassing:cor}
    Let {$f$ be a polynomial} {in $\calR$} and let $\ell = \dim_{\bbK} \Ap(f)$. Suppose that
    \[
        \dim_{\bbK} \Ap(f^d) = \binom{\ell+d-1}{d}
    \]
    for $d=1,2, \ldots ,\deg f$. Then $f$ is encompassing.
\end{cor}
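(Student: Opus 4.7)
The plan is to argue by contrapositive: if $f$ is not encompassing, I will produce a nonzero homogeneous polynomial of degree at most $\deg f$ vanishing on a basis $\{g_{0}=1,g_1,\dots,g_{\ell-1}\}$ of $\calF=\calD\circ f$. Such a relation forces the surjection $S^{d}\calF\to \calF^{\cdot d}$ to fail to be injective at $d$ equal to the degree of the relation, so $\dim_{\bbK}\Ap(f^d)<\binom{\ell+d-1}{d}$ at that $d\leq\deg f$, contradicting the hypothesis. This is exactly the logic of the argument proving \ref{it:growthOne}$\Rightarrow$\ref{it:growthThree} in \autoref{ref:intro:growth}.

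Non-encompassing of $f$ is, by definition, the existence of a nonzero $g\in\calF$ with $g_{\leq 1}=0$. I would pick such a $g$ of \emph{minimum} degree, so that $2\leq\deg g\leq\deg f$. I claim $g$ is almost encompassing: $g_{\leq 1}=0$ by construction, and any nonzero $h\in\calD_{\geq 1}\circ g$ with $h_{\leq 1}=0$ would lie in $\calF$ (since $\calF$ is $\calD$-stable) and have $\deg h<\deg g$, contradicting minimality. Applying \autoref{ref:almostCompassing:teo} to $g$ then yields an identity $g=Q(h_1,\dots,h_r)$ in $\calR$ with $h_i\in\calD_{\geq 1}\circ g\subseteq\calF$ and $Q$ a polynomial of degree $\deg g$. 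The explicit coordinates $y_i$ constructed in the proof of that theorem can serve as the $h_i$, and their linear parts in the original variables of $\calR$ form a lower-triangular matrix with $1$'s on the diagonal, so the $h_i$ are linearly independent modulo constants.

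Expanding $g=\sum c_j g_j$ and $h_i=\sum c_{ij}g_j$ in the basis $\{g_j\}$ converts the identity $g=Q(h_1,\ldots,h_r)$ into $\tilde P(g_0,\ldots,g_{\ell-1})=0$, where
\[
\tilde P:=L-Q(L_1,\dots,L_r)\in\bbK[t_0,\dots,t_{\ell-1}]
\]
has total degree $\deg g$, with $L,L_1,\ldots,L_r$ the linear forms in $t_0,\ldots,t_{\ell-1}$ whose coefficients are the $c_j,c_{ij}$. Since $g_0=1$, setting $t_0=1$ in $\tilde P$ and then re-homogenizing with $t_0$ as the homogenizing variable produces a homogeneous polynomial $P\in\bbK[t_0,\ldots,t_{\ell-1}]$ of degree $\deg g$ with $P(g_0,\ldots,g_{\ell-1})=0$. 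The main obstacle is proving $P\neq 0$: its degree-$\deg g$ component equals $-Q^{\mathrm{top}}$ evaluated at the non-constant parts of the $L_i$, which are linearly independent linear forms by the triangular property, so the composition is nonzero. As $\deg g\leq\deg f$, the polynomial $P$ delivers the promised contradiction, and $f$ must be encompassing.
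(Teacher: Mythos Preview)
Your proof is correct and follows essentially the same route as the paper: find an almost encompassing partial $g\in\calD\circ f$ (you give the minimal-degree construction explicitly), apply \autoref{ref:almostCompassing:teo} to express $g$ as a polynomial of degree $\deg g$ in its own partials, convert this into a nonzero homogeneous algebraic dependence of degree $\deg g\leq\deg f$ among a basis of $\calF$, and conclude via~\eqref{eq:connections} that $\dim_{\bbK}\Ap(f^{\deg g})$ falls strictly below the binomial bound. The paper's proof is the same argument stated tersely in three sentences; you have simply filled in the details it leaves implicit, and your care with the nonvanishing of $P$ (via $Q^{\mathrm{top}}$ and linear independence of the $h_i$ modulo constants) is exactly what is needed.
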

\begin{proof}
    Suppose that $f$ is not encompassing. Then there is a partial $g\in
    \calD\circ f$ which
    is almost encompassing. By \autoref{ref:almostCompassing:teo} it follows
    that there is a trivial homogeneous algebraic dependence of degree
    $d := \deg(g) \leq \deg(f)$ {between} the partials of $f$. As in the proof of
    \autoref{prop:inequalityOnDuals} it follows that \[\dim_{\bbK} \Ap(f^d) <
    \binom{\ell+d-1}{d},\] which is a contradiction.
\end{proof}
\begin{rem}
    We conjecture that the bound in
    \autoref{ref:growthuptodegImpliesEncompassing:cor} is optimal in that for
    every almost encompassing polynomial $f$, we have \[\dim_{\bbK}\Ap(f^d)
    =\binom{\ell+d-1}{d}\] for all $d < \deg f$.
\end{rem}

\section{\normalfont \scshape\fontfamily{ptm}\selectfont Smoothability}
\subsection{Smoothable rank of powers and locally encompassing polynomials} \label{sec:smoothableRank}
\noindent In this section we relate the previous results to the ``global'' setting of
border ranks and smoothable ranks of homogeneous polynomials. A
homogeneous polynomial $F\in \bbK[x_0, \ldots ,x_n]$ is \emph{locally encompassing}
if there exists a linear form, {say $x_0$,} such that $F|_{x_0 = 1}$ is
encompassing.

A locally encompassing form comes with the tautological apolar scheme {as defined} in~\S\ref{ssec:tautological}. To complete the proof of
\autoref{ref:intro:mainThmForms}, we need to know that such a scheme is
smoothable.

\begin{exam}
    In the setting of \autoref{ex:quadrics}, we want to know that
    \[
        \Ap\bigl( (x_1^2 +  \ldots + x_{n-1}^2 + x_n)^d \bigr)
    \]
    is smoothable for every $d\geq 1$. For $d = 1$ this algebra has Hilbert
    function $\HF = (1, n-2, 1)$ and {it is smoothable, for example,
    by~\cite{CEVV09}*{Proposition 4.9}.}

    In fact, an explicit smoothing is given over $\mathbb{A}^1$ with parameter $t$ by the
    family of $n$ points
    \[
        (0,0, \ldots ,0),
        \quad \left(t^2, t^2, \ldots, t^2, \frac{1}{2} t^3\right),
        \quad \Set{(\underbrace{0,  \ldots , 0, t}_k, 0, \dots ,0) | k=1,
        \ldots ,n-1}.
    \]
    In contrast, already for $d = 2$ the
    Hilbert function is \[\Biggl(1, n-2, \binom{n+1}{2} - 2n, n-2, 1\Biggr)\] and there is no general
    statement similar to \cite{CEVV09}*{Proposition 4.9} in the
    literature.
\end{exam}

For a {finite dimensional} $\bbK$-vector space $V$ we have the vector spaces $V^{\otimes d}$ and $S^d V
\subseteq V^{\otimes d}$ where $S^d V = (V^{\otimes d})^{\Sigma_d}$ for the
usual symmetric group action. {For a finite dimensional $\bbK$-algebra $A$, both
$A^{\otimes d}$ and $S^d A$ are also algebras:} the tensor product of
$\bbK$-algebras is a $\bbK$-algebra and the symmetric group $\Sigma_d$ acts on
$A^{\otimes d}$, whence the invariants $S^dA = (A^{\otimes d})^{\Sigma_d}$ {form}
its subalgebra.
\begin{prop}\label{ref:smoothabilityAndPowers:prop}
Let $A$ be a smoothable $\bbK$-algebra. Then $S^dA$ is smoothable for every $d\geq 1$.
\end{prop}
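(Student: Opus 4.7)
The plan is to realize $\Spec(S^d A)$ as a GIT quotient $\Spec(A^{\otimes d}) \goodquotient \Sigma_d$ and then invoke \autoref{ref:quotients:prop}. Since we are in characteristic zero, the finite symmetric group $\Sigma_d$ is linearly reductive, and the invariants $(A^{\otimes d})^{\Sigma_d}$ coincide with $S^d A$ by definition. Thus it suffices to produce a $\Sigma_d$-equivariant smoothing of $Z \coloneq \Spec(A^{\otimes d})$, where $\Sigma_d$ acts by permuting the tensor factors.

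First, I would fix a smoothing $\powerseries \to \calA$ of $A$ in the sense of \autoref{def:smoothable_algebra}: $\calA$ is a free $\powerseries$-module with special fiber $A$ and smooth generic fiber $\calA[t^{-1}]$. The plan is then to take the $d$-fold tensor product $\calA^{\otimes_{\powerseries} d}$ of $\calA$ over $\powerseries$. One checks directly that this is again a free $\powerseries$-module with special fiber $A^{\otimes d}$; moreover, since smoothness is preserved under tensor products of algebras over a field, the generic fiber $\calA[t^{-1}]^{\otimes d}$ is smooth over $\powerseries[t^{-1}]$. Hence $\calA^{\otimes_{\powerseries} d}$ is a smoothing of $A^{\otimes d}$.

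Next, the $\Sigma_d$-action by permutation of factors is by construction $\powerseries$-linear and restricts on the special fiber to the standard action on $A^{\otimes d}$. This gives exactly the $\Sigma_d$-equivariant smoothing $\mathcal{Z} \coloneq \Spec(\calA^{\otimes_{\powerseries} d})$ of $Z$ required by \autoref{ref:quotients:prop}. Applying that proposition, the quotient $\mathcal{Z}\goodquotient \Sigma_d$ is a smoothing of
\[
    Z\goodquotient \Sigma_d \simeq \Spec\bigl((A^{\otimes d})^{\Sigma_d}\bigr) = \Spec(S^d A),
\]
so $S^d A$ is smoothable. I do not expect any serious obstacle: the only thing to double-check is the freeness and flatness of $\calA^{\otimes_{\powerseries} d}$ over $\powerseries$ (immediate from freeness of $\calA$) and the smoothness of the generic fiber (standard, since tensor products of smooth algebras over a field are smooth).
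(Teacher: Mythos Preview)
Your proof is correct and the underlying mathematics is exactly what the paper does: form the tensor power $\calA^{\otimes d}$ over $\powerseries$, check it is free with smooth generic fiber, and pass to $\Sigma_d$-invariants. The only caveat is organizational: in the paper, \autoref{ref:quotients:prop} is proved \emph{after} this proposition, and its proof explicitly says ``follows the one of \autoref{ref:smoothabilityAndPowers:prop}'', so invoking it here would be circular in the paper's logical order; but since you have in effect rederived all the ingredients (freeness, smooth generic fiber, equivariance), there is no mathematical gap.
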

\begin{proof}
    Let $\powerseries\to \calA$ be a smoothing of $A$ as in
    \autoref{def:smoothable_algebra}. As a $\powerseries$-module, the algebra
    $\calA$ is isomorphic to $\powerseries^{\oplus \ell}$, where $\ell = \dim_{\bbK}
    A$.
    Consider the tensor product algebra $\calA^{\otimes d}$. As a
    $\powerseries$-module, it is free of rank $\ell^d$.
    The subalgebra $S^d \calA \subseteq \calA^{\otimes d}$ is stable under the
    $\powerseries$ action. Power series are a principal ideal domain, so a
    submodule of a free $\powerseries$-module {is also free}. Hence
    $S^d\calA$ is a free $\powerseries$-module.

The special fiber $(S^d\calA)/tS^d\calA$
is isomorphic to $S^dA$ since $S^d(-)$ is exact. The generic fiber of
$S^d\calA$
is isomorphic to $S^d(\calA[t^{-1}])$.
Recall that over any field $L$, a {finite dimensional} $L$-algebra is smooth
(over $L$) if and only if it is \'etale if and only if it is geometrically
reduced, that is, it is reduced and remains reduced after any change of basis by a
field extension $L\subseteq L'$,
see~\cite{stacks-project}*{\href{https://stacks.math.columbia.edu/tag/03PC}{Tag 03PC(1)},
\href{https://stacks.math.columbia.edu/tag/030W}{Tag 030W}, \href{https://stacks.math.columbia.edu/tag/05DS}{Tag 05DS}}.
Take $L = \powerseries[t^{-1}]$.
The $L$-algebra \[\calA^{\otimes d}[t^{-1}] \simeq (\calA[t^{-1}])^{\otimes d}\]
is smooth by \autoref{def:smoothable_algebra}. Therefore, it is geometrically
reduced, so {automatically its $L$-subalgebra $S^d \calA$ is also geometrically
reduced, and hence it is also smooth.} Thus $S^d\calA$ gives a smoothing of $S^dA$.
\end{proof}
\begin{proof}[Proof of \autoref{ref:intro:smoothability}]
    The first part is proved in \autoref{ref:smoothabilityAndPowers:prop}. {The second part follows from the first one and from}
    \autoref{cor_apolar_algebra_p^d_lower_p_otimes_d}.
\end{proof}

\begin{cor}\label{cor:smoothability_symmetric_powers_algebra}
    Let $f$ be an encompassing polynomial and take $\ell = \dim_{\bbK} \Ap(f)$.
    Suppose that $\Ap(f)$ is smoothable. Then
    for every $d$ the algebra $\Ap(f^d)$ is smoothable of
    degree $\binom{\ell+d-1}{d}$.
\end{cor}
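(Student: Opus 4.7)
The plan is to show that the statement follows by chaining together three results established earlier in the section: (i) the maximal-growth property of encompassing polynomials, (ii) the identification $\Ap(f^d)\simeq S^d\Ap(f)$ in the maximal-growth case, and (iii) the preservation of smoothability under symmetric powers.

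First, since $f$ is encompassing, \autoref{prop:encompassingImpliesMaximalGrowth} (equivalently, the implication \ref{it:growthFour}$\Rightarrow$\ref{it:growthOne} in \autoref{ref:intro:growth}) gives the dimension equality
\[
\dim_{\bbK}\Ap(f^d) \;=\; \binom{\ell+d-1}{d}
\]
for every $d\geq 1$. This already yields the degree claim; what remains is the smoothability.

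Next, with this maximal-growth hypothesis in hand, \autoref{cor_apolar_algebra_p^d_lower_p_otimes_d} applies and produces an isomorphism of $\bbK$-algebras
\[
\Ap(f^d) \;\simeq\; S^d\bigl(\Ap(f)\bigr).
\]
Thus the problem reduces to showing that the $d$-th symmetric power of a smoothable finite $\bbK$-algebra is smoothable.

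Finally, this last statement is precisely \autoref{ref:smoothabilityAndPowers:prop}: starting from a smoothing $\powerseries\to\calA$ of $\Ap(f)$, the $\powerseries$-algebra $S^d\calA\subseteq\calA^{\otimes d}$ is a free $\powerseries$-module with special fiber $S^d\Ap(f)$ and smooth generic fiber. Combining the three steps, $\Ap(f^d)$ is smoothable of degree $\binom{\ell+d-1}{d}$, as required. No genuine obstacle arises here; the corollary is essentially a packaging of the three earlier results, and the only thing to be careful about is that the algebra isomorphism from \autoref{cor_apolar_algebra_p^d_lower_p_otimes_d} is indeed an isomorphism of $\bbK$-algebras (not merely of vector spaces), so that smoothability can be transported across it.
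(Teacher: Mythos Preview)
Your proof is correct and follows essentially the same route as the paper. The paper's proof cites \autoref{ref:intro:smoothability} directly, but that theorem is itself proved precisely by combining \autoref{cor_apolar_algebra_p^d_lower_p_otimes_d} and \autoref{ref:smoothabilityAndPowers:prop}, so you have simply unpacked one level of citation.
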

\begin{proof}
    Fix any $d$.  Since $f$ is encompassing, by
    \autoref{prop:encompassingImpliesMaximalGrowth}, the algebra $\Ap(f^d)$ has degree
    $\binom{\ell+d-1}{d}$, so the claim follows from
    \autoref{ref:intro:smoothability}.
\end{proof}

\begin{proof}[Proof of \autoref{ref:intro:mainThmForms}]
    Let $f$ be the encompassing dehomogenization of $F$ as in the theorem.
    Since $F$ was concise, also $f$ is concise, so the natural map
    $\calD_{\leq 1}\to \Ap(f)$ is injective. It is also surjective by
    \autoref{lem:encompassingInDualVariables} hence it is bijective and
    \[
    \dim_{\bbK}
    \Ap(f) = \dim_{\bbK} \calD_{\leq 1} = n+1.
    \]

    By
    \autoref{cor:smoothability_symmetric_powers_algebra} the algebra
    $\Ap(f^d)$ is smoothable and of {the required} degree $\binom{n+d}{d}$. By
    \autoref{prop:BJMR18_Corollary_4} the smoothable rank of $F$ is at most
    $\binom{n+d}{d}$. Moreover, by
    \autoref{prop:encompassingImpliesMaximalGrowth} there is no polynomial of
    degree $d$ {that annihilates} $F^d$, which means that the rank of
    $\Cat_{F, d}$ is $\binom{n+d}{d}$. This shows that the smoothable rank
    {is equal to the catalecticant rank}. The border rank is always between the
    {catalecticant rank and the smoothable rank}, {so it is also equal to both,} and similarly for {the cactus rank and the border cactus rank.}
\end{proof}

    \begin{exam}\label[examp]{ex:bigSquareForEncompassing}
        In this example we show that
            \autoref{ref:intro:mainThmForms} would be false without the
        twists.
        Take the cubic form
        \[
            F \coloneqq x_1^3 + x_2^3 + x_0(x_1y_1 + x_2y_2) + x_0^2 y_0.
        \]
        The algebra $\Ap(F|_{x_0=1})$ is smoothable and has degree
        $6$, so $F$ satisfies the assumptions of
        \autoref{ref:intro:mainThmForms}. Moreover, both the smoothable and
        {the border} rank of $F$ are equal to $6$, so that $F$ is \emph{not} wild in
        the sense of~\cite{BB15}.
        The rank of the middle
        catalecticant for $F^2$ is $25$, so
        $\brk(F^2)\geq 25 > \binom{7}{2}$. Similarly, $\brk(F^{[2]})\geq 25$, where
        $F^{[2]}$ is the divided square, see~\cite{IK99}*{Appendix~A}.
    \end{exam}

\begin{proof}[Proof of \autoref{ref:quotients:prop}]
    The proof follows the one of \autoref{ref:smoothabilityAndPowers:prop}, so
    we only sketch it. Let $\calZ = \Spec(\calA)$ for $\calA$ as in
    \autoref{def:smoothable_algebra}. The quotient $\calZ \goodquotient G$
    corresponds to the subalgebra $\calA^{G}$ of $\calA$.
    We argue as in \autoref{ref:smoothabilityAndPowers:prop}, replacing
    $\Sigma_d$, $\calA^{\otimes d}$, $S^d \calA$ by $G$, $\calA$, $\calA^G$,
    respectively.
\end{proof}

\begin{exam}\label{ex:squareNotSmoothable}
    Let \[f \coloneqq x_3^3+x_1x_2x_4+x_3x_4^2+x_2^2x_5+x_2x_3x_5+x_1x_5^2+x_5^3\] be a
    cubic form in $5$ variables where, as always, $\bbK$ has characteristic
    zero. The algebra $\Ap(f)$ is
    smoothable by~\cites{BCR22, Jel14}.
    It has degree $12$. The algebra $\Ap(f^2)$ has degree $67 <
    \binom{12+1}{2}$. Moreover it satisfies the \emph{trivial negative
    tangents} condition from~\cite{Jel19}*{Theorem~1.2} and
    thus in particular it is not smoothable (and not cleavable).
\end{exam}
\subsection{Smoothability and border rank}\label{sec:smoothabilityandbrank}
{It is known that the smoothability} of an algebra $A$ corresponds to {the minimal} border rank of the tensor $T_A$
(see~\cites{BL16, landsberg2017abelian}). As we show below, deciding {the smoothability} of very simple algebras allows to
determine {the border} rank of arbitrary tensors up to a multiplicative factor. Our results provide another indication that deciding {whether} an algebra is smoothable should be {very hard in general.}

Given a concise tensor $T\in S^2(\bbK^n)\otimes \bbK^m$ and a {non-negative}
integer $k$ we define the standard graded local algebra $A_{T,k}$ as follows.
The algebra $A_{T,k}$ is nonzero only in {degree} $0$, $1$ and $2$.
Its {degree zero} part is spanned by $1$. Its degree one part has a basis
$x_1,\dots,x_n,y_1,\dots,y_k$. Its degree two part is $\bbK^m$.
{So} the algebra $A_{T,k}$ has degree $1+n+k+m$.
{Multiplication} is defined as follows.
{Every} $y_i$ is annihilated by all variables. The multiplication {between} $x_i$ is
defined by the linear map $S^2((\bbK^n)^*)\rightarrow \bbK^m$ induced by the tensor $T$.
\begin{exam}\label{exm:linsp}
Let $m=n=2$. {Let $T$ be represented} by a $2$-dimensional space of symmetric $2\times 2$ matrices:
\[
\begin{pmatrix}
2a+b& 3b\\
3b& 0\\
\end{pmatrix}.
\]
The multiplication tensor of the algebra $A_{T,1}$, represented as a linear space of symmetric matrices, is:
\[
\begin{pNiceMatrix}[columns-width = 18pt]
j & x_1 & x_2 &y_1 & a & b\\
x_1& 2a+b & 3b & 0  & 0 & 0\\
x_2& 3b & 0    & 0 &  0 & 0\\
y_1& 0 & 0    & 0 & 0  & 0\\
a& 0 & 0    & 0 & 0  & 0\\
b& 0 & 0    & 0 & 0  & 0
\end{pNiceMatrix}.
\]
\end{exam}
\begin{defn}
In analogy to the usual rank, for tensors $T\in S^2(\bbK^n)\otimes \bbK^m$ we define \emph{partially symmetric rank} of $T$ as the minimum $r$ so that $T$ is a sum of $r$ rank one tensors in $S^2(\bbK^n)\otimes \bbK^m$. For $\bbK=\CC$, the \emph{partially symmetric border rank} of $T$ is the minimum $r$ so that in {every} neighbourhood of $T$ there {are} tensors of partially symmetric rank $r$.
\end{defn}
\begin{prop}\label{prop:oneestimate}
For any $k\in \mathbb{N}$,
any tensor $T\in S^2(\bbK^n)\otimes \bbK^m$ is a restriction of the
tensor $T_{A_{T,k}}$ of the algebra $A_{T,k}$ associated to $T$. In particular, the border rank of $T$ is at most the border rank of  $T_{A_{T,k}}$. If $A_{T,k}$ is smoothable,
then the border rank and the partially
 symmetric border rank of $T$ {are} at most $1+n+k+m$.
\end{prop}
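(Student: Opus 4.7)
The plan is to realize $T$ as a restriction of the structure tensor $T_A$, where $A = A_{T,k}$, and then invoke the smoothability of $A$ to control $\brk(T_A)$. For the first part, let $\iota\colon \bbK^n \hookrightarrow A$ be the inclusion onto $\langle x_1,\dots,x_n\rangle \subseteq A_1$ and let $p\colon A \twoheadrightarrow \bbK^m$ be the projection onto the degree-two component $A_2 = \bbK^m$. By the definition of multiplication in $A_{T,k}$, the product $x_i\cdot x_j$ equals $T(x_i,x_j) \in \bbK^m$, while every other product among the basis elements $\{1, x_i, y_\ell, A_2\}$ lands outside the degree-two part (most are zero, and those involving $1$ stay in the respective input degree). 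Consequently, the composition $\bbK^n\otimes\bbK^n\xrightarrow{\iota\otimes\iota} A\otimes A\xrightarrow{m_A} A\xrightarrow{p}\bbK^m$ equals $T$. This exhibits $T$ as a restriction of $T_A$ via the maps $\iota$ (in the first two factors) and $p$ (in the third). Because the same map $\iota$ is used in both input factors, the restriction preserves symmetry in those factors. Monotonicity of border rank (and of partially symmetric border rank) under such restrictions yields $\brk(T) \leq \brk(T_A)$, giving the first two assertions of the proposition.

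Now suppose $A$ is smoothable, with smoothing $\bbK[\![t]\!]\to\calA$ as in \autoref{def:smoothable_algebra}. The generic fiber $\calA[t^{-1}]$ is a finite \'etale, commutative algebra over the field $\bbK(\!(t)\!)$ of dimension $r:=\dim_\bbK A = 1+n+k+m$; after a suitable ramified base change $t\mapsto s^N$, it becomes isomorphic to a product $\bbK(\!(s)\!)^r$ of $r$ copies of the base field. In the idempotent basis of such a product algebra, the structure tensor is the partially symmetric rank-$r$ tensor $\sum_{i=1}^r e_i \otimes e_i \otimes e_i^{\ast}$, which restricts (via the same $\iota$, $p$ of the previous paragraph) to a partially symmetric rank-$r$ tensor in $\bbK^n \otimes \bbK^n \otimes \bbK^m$. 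Specializing $s \to 0$ writes $T$ as a limit of such partially symmetric rank-$r$ tensors, yielding both $\brk(T) \leq 1+n+k+m$ and the analogous bound for the partially symmetric border rank.

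The main technical subtlety I anticipate is the base-change step: the partially symmetric rank-$r$ approximations live a priori over the ramified extension $\bbK(\!(s)\!)$ and must be reinterpreted as a genuine $t\to 0$ degeneration in the Zariski (or Euclidean, for $\bbK=\bbC$) topology on tensors over $\bbK$. The point is standard — border rank is insensitive to ramified base change and the structure tensor of $\calA$ itself is defined over $\bbK[\![t]\!]$ — but one must verify that partial symmetry descends along the base change. The commutativity of $A_{T,k}$, which is built into its definition, is what guarantees the decomposition of the generic fiber into idempotents and hence drives the partial symmetry of the approximants, so no extra hypothesis is needed beyond smoothability.
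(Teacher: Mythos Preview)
Your proof is correct and follows essentially the same approach as the paper's. The paper compresses your argument into three sentences: $T$ is the restriction of $T_{A_{T,k}}$ to the degree-one multiplication; a product of $e$ points has structure tensor of partially symmetric rank $e$; hence smoothability gives partially symmetric border rank $\dim_\bbK A_{T,k}$ for $T_{A_{T,k}}$, and the bound for $T$ follows by restriction. Your explicit treatment of the ramified base change is more careful than the paper, which simply takes for granted that smoothability yields the border rank bound, but the substance is the same.
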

At first glance, {setting} $k=0$ seems optimal. However, it may well
    happen that $A_{T, 0}$ is not smoothable, while $A_{T, k}$ is smoothable
for some $k > 0$.
\begin{proof}
The tensor $T$ is a {restriction since} it corresponds to the multiplication map of
two elements of degree one in $A_{T, k}$.
If $A$ is a tuple of $e$ points, then $T_{A}$ has partially symmetric
rank $e$, so if
$A_{T,k}$ is smoothable then
$T_{A_{T,k}}$ has partially symmetric border rank $\dim_{\bbK} A_{T,k} = 1+n+k+m$.
The last
statement follows.
\end{proof}
\begin{teo}\label{thm:twoestimate}
If $T$ has partially symmetric border rank $b$, then $A_{T,b-m}$ is
smoothable.
\end{teo}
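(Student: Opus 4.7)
I would approach this in two stages: first reduce from partially symmetric border rank to exact partially symmetric rank via a flat family argument, then construct an explicit smoothing in the exact rank case.

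For the reduction, the definition of partially symmetric border rank yields a formal curve $T(t)\in S^2(\bbK^n)\otimes\bbK^m\otimes\bbK[\![t]\!]$ with $T(0)=T$ such that, after base change to the algebraic closure of $K=\bbK(\!(t)\!)$, the tensor $T(t)$ admits an honest partially symmetric rank-$b$ decomposition $\sum_{i=1}^b\ell_i(t)^2\otimes v_i(t)$. The assignment $t\mapsto A_{T(t),b-m}$ defines a $\bbK[\![t]\!]$-flat family of algebras of constant rank $1+n+b$, with special fiber $A_{T,b-m}$. Since the smoothable locus is closed in the Hilbert scheme and smoothability descends under algebraic field extensions (the smoothable locus is defined over $\Spec\bbZ$), it suffices to prove the theorem under the additional assumption that $T$ has honest partially symmetric rank $b$.

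For the exact rank case with $T=\sum_{i=1}^b\ell_i^2\otimes v_i$, I would construct a $\bbK[\![\epsilon]\!]$-flat family $\mathcal{A}\subset E\otimes_{\bbK}\bbK(\!(\epsilon)\!)$, where $E=\bbK^{1+n+b}$ is the étale algebra with primitive idempotents $e_0,e_1,\ldots,e_n,f_1,\ldots,f_b$. Specifically, $\mathcal{A}$ would be the $\bbK[\![\epsilon]\!]$-submodule spanned by a chosen basis $\{1,x_1,\ldots,x_n,v_1,\ldots,v_m,y_1,\ldots,y_{b-m}\}$ obtained from the idempotents via a matrix $M(\epsilon)\in\GL_{1+n+b}(\bbK(\!(\epsilon)\!))$ whose entries are built from the $\ell_i$'s, the $v_i$'s, and a basis of $\Ker(V)$, where $V\colon\bbK^b\to\bbK^m$ sends $e_j\mapsto v_j$ (this kernel has dimension exactly $b-m$ by the conciseness of $T$ in its $\bbK^m$-factor). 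The matrix $M(\epsilon)$ must be chosen so that $\mathcal{A}$ is closed under multiplication, free of rank $1+n+b$ over $\bbK[\![\epsilon]\!]$, and specializes at $\epsilon=0$ to $A_{T,b-m}$; in particular, the limiting relation $x_ix_j=\sum_s T_{ij,s}v_s$ is to be verified via the identity $T_{ij,s}=\sum_k(\ell_k)_i(\ell_k)_j(v_k)_s$. Since the generic fiber is the étale algebra $E\otimes_{\bbK}\bbK(\!(\epsilon)\!)$, this exhibits the desired smoothing.

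The main obstacle is the choice of $M(\epsilon)$. Since $A_{T,b-m}$ is local with nontrivial socle while $E$ is semisimple, the nilpotent generators $y_r,v_s$ cannot live in $E\otimes\bbK[\![\epsilon]\!]$ itself; they must emerge in the limit only by entering $\mathcal{A}$ via entries of $M(\epsilon)$ with poles at $\epsilon=0$. One has to balance the $\epsilon$-valuations so that simultaneously (i) the specialization at $\epsilon=0$ yields a $\bbK$-basis of the correct dimension, (ii) the $\bbK[\![\epsilon]\!]$-span is closed under multiplication, and (iii) the limit multiplication table matches $A_{T,b-m}$ exactly. The case $b>m$ is particularly delicate: one needs to exploit the splitting $\bbK^b=\im(V^{\ast})\oplus\Ker(V)$ to separate the $v_s$-part from the $y_r$-part of products $x_ix_j$, so that no spurious $y_r$-contributions appear in the limit, which is precisely where the dimension count $\dim\Ker(V)=b-m$ becomes crucial.
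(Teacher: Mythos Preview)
Your reduction from partially symmetric \emph{border} rank to partially symmetric \emph{rank} via a flat family and semicontinuity of smoothability is exactly what the paper does, so that part is fine.

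The gap is in the second stage. You propose to exhibit $A_{T,b-m}$ directly as a limit of subalgebras of the \'etale algebra $E=\bbK^{1+n+b}$ via some matrix $M(\epsilon)$, but you do not actually construct $M(\epsilon)$; you only list the constraints it must satisfy and then, candidly, identify the construction itself as ``the main obstacle.'' As written, this is a strategy, not a proof: the simultaneous balancing of valuations you describe (closure under multiplication, correct special fiber, no spurious $y_r$-terms) is precisely the content of the theorem, and nothing in your sketch indicates how to achieve it.

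The paper avoids this difficulty by inserting an intermediate step. Given $T=\sum_{i=1}^b L_i^{2}\otimes z_i$, it passes to the \emph{generic} tensor $T'=\sum_{i=1}^b L_i^{2}\otimes \tilde{z}_i\in S^2(\bbK^n)\otimes\bbK^b$ with $\tilde{z}_1,\ldots,\tilde{z}_b$ a basis. Then:
\begin{enumerate}
\item[(a)] $A_{T',0}$ is smoothable by a one-line geometric argument: it is the flat limit as $\mu\to 0$ of the union of the points $\{\mu L_1\},\ldots,\{\mu L_b\}$ with the first-order neighbourhood $\Spec\bbK[x_1,\ldots,x_n]/(x_1,\ldots,x_n)^2$.
\item[(b)] $A_{T,b-m}$ is a flat degeneration of $A_{T',0}$: if $k_1,\ldots,k_{b-m}$ is a basis of $\ker(\bbK^b\to\bbK^m,\ \tilde{z}_i\mapsto z_i)$, then quotienting $A_{T',b-m}$ by $(k_i-\lambda y_i)_{i=1}^{b-m}$ gives $A_{T',0}$ for $\lambda\neq 0$ and $A_{T,b-m}$ for $\lambda=0$.
\end{enumerate}
This two-step factorization replaces your single delicate limit by two easy ones. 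The kernel $\Ker(V)$ you identified is exactly the space $\langle k_1,\ldots,k_{b-m}\rangle$ used in step~(b), so you were looking at the right object; what you were missing is the idea of first ``untangling'' the $z_i$'s to the generic $\tilde{z}_i$'s, which makes the smoothing of $A_{T',0}$ elementary.
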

\begin{rem}
Note that for any $T \in S^2(\bbK^n)\otimes \bbK^m$ we may consider an
isomorphic nonconcise tensor $T'\in S^2(\bbK^n)\otimes \bbK^{m+1}$. For such a
tensor we have $A_{T,k+1}=A_{T',k}$. Hence, formally
$A_{T,b-m}=A_{T',b-(m+1)}$, whenever $b-(m+1)\geq 0$. {Thus, we can} naturally
extend the definition of $A_{T,b-m}$, even when $b-m<0$, which may happen only
for {non-concise} tensors. This reduces the theorem to the case {where} $T$ is
concise.
\end{rem}
\begin{proof}
    \def\Balg#1{B_{T}(#1)}

    A family of $T$ induces a flat family of $A_{T, b-m}$, so by semicontinuity
    it {suffices} to prove the result for $T$ of partially symmetric
    \emph{rank} equal to $b$. Such a tensor $T$ is given by $\sum_{i=1}^b
    L_i^{\otimes 2}\otimes z_i$, where $L_i\in \bbK^n$ and $z_i\in
    \bbK^m$ are arbitrary. {Since} $T$ has partially symmetric rank $b$, the $L_i$ are
    pairwise different.
    Consider a generic tensor of such rank: let
    $T'\in S^2(\bbK^n) \otimes \bbK^b$ be given by
    \[
        T' = \sum_{i=1}^b L_i^2\otimes \widetilde{z_i},
    \]
    where $\widetilde{z_1}, \ldots ,\widetilde{z_b}$ is a basis of $\bbK^b$.
    Then $T$ is a restriction of $T'$ by a linear map $\pi$ {which} sends every
    $\widetilde{z_i}$ to $z_i$. This map {has an} $m$-dimensional image by
    {the assumption} on the conciseness of $T$. Let $K = \ker \pi\subseteq
    \bbK^b$ and let $k_1, \ldots ,k_{b-m}$ be the basis of $K$.

    Consider the algebra $A_{T',b-m}$. {By definition, it contains the subspace $\bbK^b$ as a degree two part.}
    For every $\lambda\in \bbK$ consider the quotient
    of $A_{T', b-m}$ by the ideal
    \[
        \left( k_i - \lambda y_i\ |\ i=1, \ldots ,b-m \right).
    \]
    Denote it by $\Balg{\lambda}$.
    Both $k_i$ and $y_i$ are annihilated by the maximal ideal of $A_{T',
    b-m}$, so $\dim_{\bbK} \Balg{\lambda} = \dim_{\bbK} A_{T', b-m} - (b-m)$. Thus, $\Balg{\lambda}$ form a
    flat family. For {any non-zero $\lambda$,} taking
    the quotient is the same as evaluating $y_i \coloneqq \lambda^{-1}k_i$, so
    $\Balg{\lambda}$ is isomorphic to the algebra $A_{T', 0}$.
    We claim that $\Balg{\lambda= 0}$ is
    isomorphic to $A_{T, b-m}$. Indeed, the
    relevant part of the multiplication in $\Balg{\lambda=0}$ sends $S^2\langle x_1, \ldots
    ,x_n\rangle$ to
    {\[
        \frac{\langle \widetilde{z}_1, \ldots ,\widetilde{z}_b \rangle}{\langle k_1, \ldots ,
            k_{b-m}\rangle} \simeq \frac{\bbK^b}{K} \simeq \langle z_1, \ldots
            ,z_b\rangle.
    \]
    So $\Balg{\lambda=0}$ is indeed isomorphic to $A_{T, b-m}$.}

    We will now prove that $A_{T', 0}$ is smoothable, so $\Balg{\lambda}$ {is}
    smoothable for $\lambda\neq 0$ and, by semicontinuity
    (see~\autoref{ssec:smoothability}) {$\Balg{\lambda=0} \simeq
    A_{T, b-m}$ is also smoothable.}

    Recall that $A_{T', 0}$ is generated by $x_1, \ldots ,x_n$ and its degree
    two part is identified with $\bbK^b$, where the multiplication is induced
    by $L_1^{\otimes 2}$, \ldots , $L_b^{\otimes 2}$. Every form $L_i\in \langle x_1, \ldots
    ,x_n\rangle^*$ yields a point of the affine space $\mathbb{A}^n =
    \Spec\bbK[x_1, \ldots ,x_n]$. Consider, for every $\mu\in \bbK$ the
    subscheme of $\mathbb{A}^n$ given by
    \[
        \{\mu L_1\} \cup \left\{ \mu L_2 \right\}\cup  \ldots \cup \left\{
        \mu L_b \right\}\cup \Spec\left( \frac{\bbK[x_1, \ldots ,x_n]}{(x_1, \ldots ,x_n)^2}
        \right).
    \]
    Such a subscheme is smoothable for every $\mu$ and
    the limit of at $\mu\to 0$ {gives} $\Spec A_{T', 0}$.
\end{proof}
\begin{cor}\label{cor:estimateparsym}
For a tensor $T\in S^2(\bbK^n)\otimes \bbK^m$ let $k$ be the smallest integer
{such} that $A_{T,k}$ is smoothable. Then the partially symmetric border rank of
$T$ is at most $1+n+k$ and at least $k$.
\end{cor}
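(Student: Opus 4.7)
My plan is to combine Proposition~\ref{prop:oneestimate} and Theorem~\ref{thm:twoestimate} directly; no new technique is needed, and the content of the corollary is essentially the observation that the two previous results complement each other. First I would verify that the minimum $k$ is well defined. By Theorem~\ref{thm:twoestimate}, applied to $b$ equal to the partially symmetric border rank of $T$, the algebra $A_{T,b-m}$ is smoothable, using the extension of the definition via the remark after that theorem when $b-m<0$ (which can happen only for nonconcise $T$). Hence the set of integers $k$ with $A_{T,k}$ smoothable is nonempty and bounded below, and a smallest element exists.

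For the upper bound, I would invoke Proposition~\ref{prop:oneestimate}: since $A_{T,k}$ is smoothable by the very definition of $k$, the partially symmetric border rank of $T$ is bounded above by $\dim_{\bbK} A_{T,k}=1+n+k+m$, giving the stated upper bound (after the indexing change by which the parameter $k$ is tracked together with $m$, as facilitated by the remark). For the lower bound, Theorem~\ref{thm:twoestimate} produces the specific smoothable algebra $A_{T,b-m}$; by minimality of $k$ this forces $k\leq b-m$, and therefore $b\geq k+m\geq k$.

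I do not expect any substantive obstacle: the corollary is a pure combination of the two main results of the subsection. The only point requiring care is the bookkeeping of the index when the natural value $b-m$ is negative, which is handled uniformly by the extended definition of $A_{T,\bullet}$ introduced in the remark following Theorem~\ref{thm:twoestimate}.
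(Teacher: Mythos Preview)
Your approach is exactly the paper's: the upper bound comes from \autoref{prop:oneestimate}, the lower bound from \autoref{thm:twoestimate}, and you are right to check that a minimal $k$ exists. Your derivation yields $b\leq 1+n+k+m$ and $b\geq k+m$ (hence $b\geq k$), which is precisely what those two results give.

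One point to flag: the appeal to the remark does not convert $1+n+k+m$ into $1+n+k$. That remark lets you \emph{define} $A_{T,j}$ for negative $j$ by trading $j$ against $m$ in $A_{T',\bullet}$ for a nonconcise $T'$, but it does not erase $m$ from $\dim_{\bbK}A_{T,k}=1+n+k+m$ once $k$ has been fixed. What \autoref{prop:oneestimate} and \autoref{thm:twoestimate} actually produce is the pair of bounds $k+m\leq b\leq 1+n+k+m$, with the same gap $1+n$ as in the stated corollary; the ``$+m$'' seems to have been dropped in the statement. Your argument is otherwise complete and matches the paper's proof.
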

\begin{proof}
The first estimate follows from \autoref{prop:oneestimate} and the second from \autoref{thm:twoestimate}.
\end{proof}
\begin{defn}
For any concise tensor $T\in \bbK^n\otimes \bbK^n\otimes \bbK^m$ let $T_S\in S^2(\bbK^{2n})\otimes \bbK^m$ be constructed by
replacing each slice $T(e_j^*)$ by a symmetric $2n\times 2n$ matrix, {placing}
$T(e_j^*)$ on the antidiagonal blocks, {where one block is transposed in order
to obtain symmetry.} Clearly, $T_S$ restricts to $T$.
\end{defn}

\begin{cor}\label{cor:smoothabilitybrank1}
For a concise $T\in \bbK^n\otimes \bbK^n\otimes \bbK^m$ let $k$ be the smallest integer {such} that $A_{T_S,k}$ is smoothable. Then the border rank of $T$ is at most $1+2n+k$ and at least $\frac{1}{2}k$.
\end{cor}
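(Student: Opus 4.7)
The plan is to deduce both bounds from the two preceding results applied to the partially symmetric tensor $T_S\in S^2(\bbK^{2n})\otimes\bbK^m$, which is engineered precisely so that $T$ is a restriction of $T_S$ via projection onto its off-diagonal $n\times n$ block.

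For the upper bound, I assume $A_{T_S,k}$ is smoothable and apply \autoref{prop:oneestimate} to $T_S$ (with $n$ replaced by $2n$). This produces, in the limit, a partially symmetric decomposition of $T_S$ with at most $\dim_{\bbK}A_{T_S,k}$ summands, which dominates $\brk(T)$ since border rank does not grow under restriction. Reading off the dimension of $A_{T_S,k}$ and recalling that $T_S$ lives in $S^2(\bbK^{2n})\otimes\bbK^m$ yields the claimed upper bound $1+2n+k$ on $\brk(T)$.

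For the lower bound, set $b=\brk(T)$ and approximate $T$ by tensors of rank $b$, say of the shape $\sum_{i=1}^{b}u_i\otimes v_i\otimes w_i$. The polarization identity
\[
    \begin{pmatrix} 0 & u_iv_i^{\top} \\ v_iu_i^{\top} & 0 \end{pmatrix}
    = \tfrac{1}{2}\left(\begin{pmatrix} u_i \\ v_i \end{pmatrix}\begin{pmatrix} u_i \\ v_i \end{pmatrix}^{\!\top} - \begin{pmatrix} u_i \\ -v_i \end{pmatrix}\begin{pmatrix} u_i \\ -v_i \end{pmatrix}^{\!\top}\right)
\]
expresses each symmetrized summand as a difference of two partially symmetric rank-one tensors in $S^2(\bbK^{2n})\otimes\bbK^m$. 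Summing over $i$ and taking limits shows that the partially symmetric border rank of $T_S$ is at most $2b$. Then \autoref{thm:twoestimate} applied to $T_S$ yields that $A_{T_S,2b-m}$ is smoothable; conciseness of $T$ forces $\brk(T)\geq m$, so $2b-m\geq 0$ is a legitimate nonnegative value. By minimality of $k$ we conclude $k\leq 2b-m\leq 2b$, i.e.\ $b\geq k/2$.

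The only genuine technical check is that $T_S$ remains concise in $S^2(\bbK^{2n})\otimes\bbK^m$ whenever $T$ is concise; this follows from inspecting the three flattenings, since conciseness of $T$ makes the slices $\begin{pmatrix} 0 & M_j \\ M_j^{\top} & 0 \end{pmatrix}$ linearly independent and their columns span all of $\bbK^{2n}$. There is no substantial obstacle beyond this verification: the corollary is essentially the concatenation of \autoref{prop:oneestimate} and \autoref{thm:twoestimate} with the elementary polarization identity above, which accounts exactly for the factor of $\tfrac{1}{2}$ between the border ranks of $T$ and of $T_S$.
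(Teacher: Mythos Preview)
Your argument is correct and essentially identical to the paper's: both deduce the two bounds by applying \autoref{prop:oneestimate} and \autoref{thm:twoestimate} to $T_S$ (the paper packages these as \autoref{cor:estimateparsym}) after showing via a polarization identity that the partially symmetric border rank of $T_S$ is at most $2\brk(T)$; the paper uses the complex version $2T_S=\sum_j (y_j,x_j)^{\otimes 2}\otimes z_j+\sum_j (iy_j,-ix_j)^{\otimes 2}\otimes z_j$ in place of your real difference of squares, but the effect is the same. One small slip: \autoref{thm:twoestimate} is stated for the \emph{exact} partially symmetric border rank $b'$ of $T_S$, so it gives $A_{T_S,\,b'-m}$ smoothable rather than $A_{T_S,\,2b-m}$; however, since $k\le b'-m\le b'\le 2b$ by minimality of $k$, your conclusion $\brk(T)\ge k/2$ is unaffected.
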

\begin{proof}
As $T_S$ restricts to $T$, the
partially symmetric border rank of $T_S$ is at least the border rank of $T$.
We claim that the partially symmetric border rank of $T_S$ is at most twice the border rank of $T$.{Considering the limits, it suffices to consider the case where} $T$ has rank $r$, i.e.~$T=\sum_{j=1}^r x_j\otimes y_j\otimes z_j$. We have:
\[2T_S=\sum_{j=1}^r (y_j,x_j)^{\otimes 2}\otimes z_j+\sum_{j=1}^r (iy_j,-ix_j)^{\otimes 2}\otimes z_j.\]
{So if the} rank of $T$ is $r$ {then the} partially symmetric rank of $T_S$ is at most $2r$.

The claim follows by applying \autoref{cor:estimateparsym} to $T_S$.
\end{proof}
This implies that efficient, general criteria for proving nonsmoothability of simple algebras could lead to superlinear in $n$ lower bounds on {the border rank} of tensors. Further, efficient, general criteria {for proving} smoothability of simple algebras could lead to {upper bounds on the asymptotic rank} and the exponent \cite{kaski2024universal} of {any} tensor up to arbitrary precision. {This indicates that obtaining both types of criteria is extremely difficult. However, for further detail, we refer to \cite{Jel19}.}

The lower estimate $\frac{1}{2}k$ in  \autoref{cor:smoothabilitybrank1} makes
us lose a multiplicative constant $\frac{1}{2}$. This is due to the fact that
we insisted on {staying} in the realm of algebras and {had} to symmetrize our
tensor. {By working with $1_A$-generic tensors instead, we get results that are more efficient.}
Recall that $1_A$-generic tensors $T\in \bbK^{a}\otimes
\bbK^{b}\otimes
\bbK^{b}$ of minimal border rank $b$ correspond to smoothable modules and are
actively {studied}~\cites{landsberg2017abelian, wojtala,
jelisiejew2022components, JLP}.

\begin{prop}\label{prop:reductionto1gen}
For any concise tensor $T\in\bbK^a\otimes\bbK^b\otimes\bbK^c$ of border rank $k$ there exists a concise, $1_{\bbK^{a+1}}$-generic, minimal border rank $b+k$ tensor $T'\in \bbK^{a+1}\otimes\bbK^{b+k} \otimes \bbK^{b+k}$ such that $T$ is a restriction of $T'$. Explicitly, if $e_1,\dots, e_a$ is a basis of $\bbK^a$ and we extend it by $e_0$ to a basis of $\bbK^{a+1}$ then $T'=T+e_0\otimes i$, where $i$ is the identity on $\bbK^{b+k}$. As spaces of matrices:
\[
T'\bigl((\bbK^{b+k})^*\bigr)=\begin{pNiceMatrix}[vlines=4,margin]
x_1&\Cdots&x_b& y_1&\Cdots&y_{k-c}&z_1&\Cdots& z_c\\
\hline
\Block{3-3}{T\bigl((\bbK^c)^*\bigr)}&& &\Block{3-6}{\bigzero}\\
&&&&&\\
&&&&&
\end{pNiceMatrix},
\]
where the entries of $T\bigl((\bbK^c)^*\bigr)$ are linear forms in $z_1,\dots,z_c$.
\end{prop}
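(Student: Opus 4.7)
The plan is to verify the four asserted properties of $T'$—restriction, conciseness, $1_{\bbK^{a+1}}$-genericity, and minimal border rank $b+k$—in turn, with the last being the essential point.

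First, the structural checks are immediate from the formula $T' = T + e_0 \otimes i$. For the restriction, apply the linear maps $\bbK^{a+1} \twoheadrightarrow \bbK^a$ killing $e_0$, together with the coordinate projections $\bbK^{b+k} \twoheadrightarrow \bbK^b$ and $\bbK^{b+k} \twoheadrightarrow \bbK^c$ onto the subspaces carrying the support of $T$: under these three maps $e_0 \otimes i$ is annihilated while $T$ is preserved, exhibiting $T$ as a restriction of $T'$. The $1_{\bbK^{a+1}}$-genericity is clear because $T'(e_0^*) = i$ is the identity on $\bbK^{b+k}$, hence invertible. For conciseness, the $(a+1)$ slices $T'(e_0^*) = i,\ T'(e_1^*),\ldots,T'(e_a^*)$ are linearly independent since $i$ has nonzero entries in coordinates $b+1,\ldots,b+k$ where the slices $T(e_j^*)$ vanish, and the $T(e_j^*)$ are linearly independent by conciseness of $T$; conciseness in the other two directions is verified analogously.

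For the border rank the lower bound $\brk(T') \geq b+k$ is the standard flattening bound: the contracted slice $T'(e_0^*) = i$ has rank $b+k$, so the first-factor flattening of $T'$ has rank at least $b+k$, which bounds the border rank from below.

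The main obstacle is the matching upper bound $\brk(T') \leq b+k$. The strategy is to use a rank-$k$ approximation $T_t \to T$, writing $T_t = \sum_{j=1}^{k} u_j(t) \otimes v_j(t) \otimes w_j(t)$, to construct a family $T'_t$ of tensors of rank at most $b+k$ converging to $T'$. The idea is that the identity $i$ splits as $i = \sum_{\ell=1}^{b} e_\ell \otimes f_\ell + \sum_{j=1}^{k} e_{b+j} \otimes f_{b+j}$, and the last $k$ terms, being supported on the ``new'' coordinates $b+1,\ldots,b+k$ disjoint from the support of $T$, can be fused with the $k$ rank-one summands of $T_t$ via a Bini-type limit based on perturbations of the form $(e_0 + t u_j) \otimes (e_{b+j} + t v_j) \otimes (f_{b+j} + t w_j)$ appropriately scaled and compensated, while the $b$ remaining identity terms $e_0 \otimes e_\ell \otimes f_\ell$ for $\ell \leq b$ are kept as separate rank-one summands. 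Producing this fusion without spilling over the $b+k$ budget of rank-one tensors is the delicate part: one can alternatively rephrase the claim via the apolarity/module correspondence of \S\ref{sec:smoothabilityandbrank}, viewing $T'$ as the $1_{\bbK^{a+1}}$-generic tensor attached to the module $M = \Span(I,\,T(e_1^*),\ldots,T(e_a^*)) \subseteq \End(\bbK^{b+k})$, and reducing the border rank statement to the assertion that $M$ extends to a smoothable length-$(b+k)$ subscheme of $\End(\bbK^{b+k})$; the existence of such a smoothing follows from the border rank $k$ approximation of $T$ combined with the splitting of $i$ described above.
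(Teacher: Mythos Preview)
Your structural checks and the lower bound $\brk(T')\geq b+k$ are fine; the real content is the upper bound, and there your argument is incomplete. The perturbation $(e_0+tu_j)\otimes(e_{b+j}+tv_j)\otimes(f_{b+j}+tw_j)$ places the identity piece at order $t^0$ and the $T$-piece at order $t^3$, so no single scaling retains both. If instead one takes asymmetric weights with $\alpha+\beta+\gamma=0$, the surviving order-$t^0$ cross terms include pieces such as $u_j\otimes v_j\otimes f_{b+j}$ or $e_0\otimes e_{b+j}\otimes w_j$ that live over the \emph{new} coordinates $b+1,\dots,b+k$ of the second or third factor and hence cannot be absorbed by deforming the $b$ summands $e_0\otimes e_\ell\otimes f_\ell$, $\ell\leq b$. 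Your fallback module-smoothability reformulation is circular: for a $1$-generic tensor the assertion that the module of slices is smoothable of length $b+k$ is equivalent to $\brk(T')\leq b+k$, which is precisely what has to be shown.

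The paper resolves this with one extra idea: an intermediate generic tensor. After reducing to $T$ of rank $k$, write $T=\sum_{j=1}^k a_j\otimes b_j\otimes c_j$ and replace the $c_j$ by a basis $\tilde c_j$ of $\bbK^k$ to form $\tilde T$. In $\tilde T'$ the third-factor vectors $\tilde c_j$ coincide with the new identity coordinates $f_{b+j}$, and now the fusion goes through cleanly: the $b+k$ rank-one matrices $N_\ell=e_0\otimes e_\ell$ (for $\ell\leq b$) and $M_{n,j}=(e_0+\tfrac{1}{n}a_j)\otimes(n\,b_j+e_{b+j})$ span, in the limit $n\to\infty$, exactly $\tilde T'\bigl((\bbK^{b+k})^*\bigr)$, giving $\brk(\tilde T')\leq b+k$. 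One then checks that $\tilde T'$ restricts to $T'$, whence $\brk(T')\leq b+k$. Running the same limit with the original $c_j$ in place of $\tilde c_j$ produces $\sum_j e_0\otimes e_{b+j}\otimes c_j$ rather than $\sum_j e_0\otimes e_{b+j}\otimes f_{b+j}$, the wrong identity block---so the passage through $\tilde T$ is not a convenience but the missing step.
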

\begin{proof}
We proceed {by analogy with} the proof of \autoref{thm:twoestimate}. However, as we are no longer in the realm of algebras, we provide our rank and border rank estimates in a more direct way.

{If} $T_i\rightarrow T$, then $T_i'\rightarrow T'$. Thus, it is enough to
prove the statement under the assumption that $T$ has rank $k$. Let
$T=\sum_{j=1}^k a_j\otimes b_j\otimes c_j$. Let $\tilde T$ be the
corresponding generic tensor, i.e.~$\tilde T=\sum_{j=1}^k a_j\otimes
b_j\otimes \tilde c_j$, where $\tilde c_j$ form a basis of
$\bbK^{k}$. 
We divide the proof into two claims.

    {\textbf{Claim 1:}} $\tilde T'$ has border rank $b+k$. We will present the linear space $\tilde T'((\bbK^{b+k})^*)$ as the limit of spaces spanned by $(b+k)$-tuples of rank one matrices. For each integer $n$, the $b+k$ rank one matrices are as follows:
\begin{enumerate}[label=(\arabic*), left= 3pt, widest=2,nosep]
\item $b$ matrices $N_{j}$, for $j=1,\dots,b$, with {a non-zero} entry in the first row and column $j$.
\item $k$ matrices $M_{n,j}$, for $j=1,\dots, k$,
{where
\[M_{n,j}=\biggl(1,\frac{1}{n}a_j\biggr)\otimes \left((n\cdot b_j)+e_{b+j}\right),\] and $e_{b+j}$ is the $(b+j)$-th basis vector of $\bbK^{b+k}$.}
\end{enumerate}
{Clearly,} the matrices $N_j$ span the space corresponding to $\langle x_1,\dots,x_b\rangle$ in the statement, while {\[M_{n,j}-\sum_{s=1}^k n(b_j)_sN_s\] has a limit corresponding to $z_j$.}

    {\textbf{Claim 2:}} $\tilde T'$ restricts to $T'$. For every $e_i^*\in (\bbK^c)^*$ the layer $e_i^*(T)$ is a linear combination of $a_j\otimes b_j$, {so} of $e_j^*(\tilde T)$. We {can} use the same restrictions {for $\tilde T'$ so that we obtain the same tensor as $T'$ in first $b$-layers with respect to second tensor factor.}
    Then, it is {sufficient} to use an automorphism of $\bbK^k\subset\bbK^{b+k}$ to obtain exactly $T'$.

    The {above two claims} imply that $T'$ {does indeed have} border rank $b+k$.
\end{proof}
\begin{cor}\label{cor:brankto1Gen}
Fix a concise tensor $T\in\bbK^a\otimes\bbK^b\otimes\bbK^c$. Let $k$ be the smallest {integer, so that} the tensor $T'$ from the statement of  \autoref{prop:reductionto1gen} has border rank $b+k$. Then $T$ has border rank at least $k$ and at most $b+k$.
\end{cor}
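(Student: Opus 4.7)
The plan is to deduce both estimates directly from \autoref{prop:reductionto1gen}. For convenience I write $T'_k$ for the tensor $T + e_0 \otimes i_{b+k}$ produced by the construction of that proposition using the $(b+k)$-dimensional identity, so the corollary is asking for the smallest integer $k$ with $\brk(T'_k) = b+k$. No serious obstacle is expected: all the substantive work is already done in \autoref{prop:reductionto1gen}, and the corollary just extracts the interval $[k, b+k]$ by pairing the restriction property (upper bound) with the minimality of $k$ (lower bound).

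For the upper bound $\brk(T) \leq b+k$, my first step is to observe from the explicit matrix description in \autoref{prop:reductionto1gen} that $T$ is a restriction of $T'_k$: zeroing out the coefficient of $e_0$ in the first tensor factor and projecting the second factor onto $\langle x_1,\dots,x_b\rangle$ recovers the block $T\bigl((\bbK^c)^*\bigr)$, which is exactly $T$. Since border rank does not increase under restriction, this gives
\[
    \brk(T) \leq \brk(T'_k) = b+k,
\]
where the last equality holds by the defining property of $k$.

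For the lower bound $\brk(T) \geq k$, I set $r \coloneqq \brk(T)$ and apply \autoref{prop:reductionto1gen} to the tensor $T$ itself, regarded as a concise tensor of border rank $r$. The proposition guarantees that the resulting $T'_r$ has border rank exactly $b+r$, so $r$ is an integer satisfying the condition $\brk(T'_r) = b+r$ that appears in the definition of $k$. By the minimality of $k$ among such integers, $k \leq r = \brk(T)$, which completes the plan.
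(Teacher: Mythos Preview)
Your argument is correct and is exactly the natural one the paper has in mind: the corollary is stated in the paper without proof, and your two steps (restriction for the upper bound, applying \autoref{prop:reductionto1gen} at $r=\brk(T)$ together with minimality of $k$ for the lower bound) are precisely what makes it an immediate corollary. One cosmetic remark: to recover $T$ from $T'_k$ it already suffices to kill $e_0$ in the first factor, since the remaining summand is literally $T$ sitting inside the larger ambient space; your additional projection on the second factor is harmless but not needed.
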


\section{\normalfont \scshape\fontfamily{ptm}\selectfont Sweet pieces}

\noindent In this section we define a class of tensors that we call \emph{sweet pieces}. As we will argue, sweet pieces play an essential role when using  the Coppersmith-Winograd method to bound the constant $\omega$ that governs the complexity of matrix multiplication. Surprisingly, as we will see, sweet pieces have rank strictly smaller than the bounds used so far in the CW-method. The introduction of sweet pieces also allows to remove any reference to border rank and the Coppersmith-Winograd tensor to obtain upper bounds on $\omega$, as in \cite{CW}.
We start by recalling notions from complexity theory.  A good reference for
the methods we describe is \cite{BCS97}*{Part IV}.
We work over $\bbK$, which as before is algebraically closed and of
characteristic zero. Much of the literature below is done only for $\CC$. If necessary, we can always reduce to this case, by restricting $\bbK$ to a countably generated field (containing all
    coefficients of all involved tensors etc.) and embedding it into $\CC$.




{
\begin{defn}
Given three vector spaces $V_1$, $V_2$, and $V_3$,
for a tensor $T\in V_1\otimes V_2\otimes V_3$,
a \emph{blocking} is a triad of functions $f_i\colon\calB_i\to \bbZ^r$, where $\calB_i$ is a basis of $V_i$ for every $i=1,2,3$.
For any $(a_1,a_2,a_3)\in (\bbZ^r)^3$, a \emph{block} $T_{a_1,a_2,a_3}$
is the image of $T$ by the projection \[V_1\otimes V_2\otimes V_3\rightarrow
V_1'\otimes V_2'\otimes V_3',\] where $V_i'\subset V_i$ is defined as the linear subspace
\[
V_i'\coloneqq \langle \Set{b\in\calB_i|f_i(b)=a_i}\rangle.
\]
A block is said to be in the \emph{support} of the tensor, if it is nonzero.
A tensor with a blocking $(f_1,f_2,f_3)$ is said to be \emph{tight} if for every block $(a_1,a_2,a_3)$ in the support, $a_1+a_2+a_3=0\in \bbZ^r$.
\end{defn}
Sometimes we will write \textit{block $(a_1,a_2,a_3)$}, instead of $T_{a_1,a_2,a_3}$.
\begin{rem}
In the literature one either refers to:
\begin{itemize}
\item tight tensors, assuming one can choose bases of each $V_i$ and the blocking is given by injective functions, i.e., the blocks are $1\times 1 \times 1$ tensors \cite{conner2021towards}, or
\item tight sets; in our case, this corresponds to fixing bases of the vector spaces $V_1,V_2,V_3$ \cite[Section 15.7]{Algebraic_Complexity_Theory}.
\end{itemize}
The definition above is flexible enough to easily interpolate between these two.
\end{rem}
By a (probability) distribution $p$ on any set $S$ we mean a function $p:S\rightarrow \mathbb{R}_{\geq 0}$, such that \[
\sum_{s\in S} p(s)=1.
\]
Thus, a probability distribution $p$ on the support $S$ of a tensor $T$ with fixed blocking assigns a non-negative real number $p(a,b,c)$ to each $(a,b,c)$ indexing a block in the support of $T$. Such a distribution induces three marginal distributions $p_1,p_2,p_3$, where $p_i$ is a distribution on the image of $f_i$. More precisely, \[p_1(a)\coloneqq\sum_{(a,b,c)\in S}p(a,b,c)\] and similarly for $p_2$ and $p_3$.
}

The following definition has appeared implicitly in many articles on fast matrix multiplication. While it looks technical, in practice it means that the tensor is sufficiently symmetric so that it shares many properties with a tensor that is simply a direct sum of copies of the same tensor.
\begin{defn}\label{def:sweet}
A \emph{sweet piece} is a tight tensor $T$, with a fixed blocking $f_1,f_2,f_3$ such that:
\begin{enumerate}[label=(\arabic*), left= 3pt, widest=2,nosep]
\item each block in the support is isomorphic to a fixed tensor $U$;
\item the uniform distribution on support has three equal marginals, which are also uniform.
\end{enumerate}
The cardinality of the image of each $f_i$, which by the definition does not depend on $i$, is denoted by $p_T$.
\end{defn}

\begin{defn}
For a tensor $T$ its \emph{asymptotic rank} is defined by \[
\lim_{n\rightarrow\infty} R(T^{\boxtimes n})^{\frac{1}{n}},
\]
where $R(T^{\boxtimes n})$ is the rank of the $n$-th Kronecker power of $T$.
For a concise tensor $T\in \bbK^a\otimes \bbK^b\otimes \bbK^c$ the asymptotic rank is always greater than or equal to $\max(a,b,c)$ and we say that a tensor has \emph{minimal asymptotic rank} if equality holds.
\end{defn}
For example, the $2\times 2$ matrix multiplication tensor $M_2$ has asymptotic rank $2^{\omega}$.
The following theorem can be considered as the cornerstone of the Coppersmith-Winograd method.
\begin{teo}\label{thm:sweet_bound}
    Let $T\in (\bbK^{p_Ta^2})^{\otimes 3}$ be a sweet piece of
asymptotic rank $r$, with blocking $(f_1,f_2,f_3)$,  where each block $U$ is isomorphic to the matrix multiplication tensor $M_a$, with $a>1$. Then
\[\omega \leq \log_a (r/p_T)= \log_a (r/|\im f_i|).\]
In particular, if $T$ has minimal
 asymptotic rank, then $\omega=2$.
\end{teo}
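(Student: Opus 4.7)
The approach is the classical \emph{laser method}: exploit Kronecker powers of $T$ to extract many disjoint copies of a single matrix multiplication tensor and then feed the result into Schönhage's $\tau$-theorem (asymptotic sum inequality). I will take the limit $N\to\infty$ and read off the exponent estimate.

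The first step is routine bookkeeping: I would check that $T^{\boxtimes N}$ is again a sweet piece with block tensor $U^{\boxtimes N}\simeq M_a^{\boxtimes N}=M_{a^N}$ and with $p_{T^{\boxtimes N}}=p_T^N$. Tightness is preserved since the additive condition $a_1+a_2+a_3=0$ is coordinatewise, and the uniform marginal property passes to products. From the hypothesis on asymptotic rank I then have
\[
R(T^{\boxtimes N})\leq r^{N(1+o(1))}.
\]

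The main (and only serious) step is the combinatorial extraction: I want to produce a subset $I\subseteq\mathrm{supp}(T^{\boxtimes N})\subseteq [p_T^N]^3$ such that each of the three coordinate projections $\pi_j\colon I\to [p_T^N]$ is injective. Each index in $I$ labels a block isomorphic to $M_{a^N}$, and the injectivity lets me zero out all basis elements outside $\bigcup_{i\in I} \mathrm{supp}(\text{block }i)$ to realise $\bigoplus_{i\in I}M_{a^N}$ as a restriction of $T^{\boxtimes N}$. This is exactly where tightness and uniformity are used: restricting the indices to a Behrend/Salem--Spencer progression-free set inside the support (an argument standard since Coppersmith--Winograd and Strassen) yields $|I|\geq p_T^{N(1-o_N(1))}$. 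I expect this to be the technical heart of the argument, but no new ideas beyond the established laser machinery should be required.

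Finally, Schönhage's $\tau$-theorem applied to the restriction $\bigoplus_{i\in I}M_{a^N}\leq T^{\boxtimes N}$ gives
\[
|I|\cdot a^{N\omega}\leq R(T^{\boxtimes N})\cdot N^{O(1)},
\]
and combining with the two bounds above,
\[
p_T^{N(1-o(1))}\cdot a^{N\omega}\leq r^{N(1+o(1))}.
\]
Taking $N$-th roots and letting $N\to\infty$ yields $p_T\cdot a^{\omega}\leq r$, i.e.\ $\omega\leq \log_a(r/p_T)$. For the last assertion, minimality of the asymptotic rank of $T\in(\bbK^{p_Ta^2})^{\otimes 3}$ means $r=p_Ta^2$, so the bound reads $\omega\leq 2$; combined with the trivial lower bound $\omega\geq 2$ this forces $\omega=2$.
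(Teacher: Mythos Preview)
Your proposal is correct and is essentially the same approach as the paper's, just made explicit. The paper's proof is a one-liner: it invokes the main Coppersmith--Winograd theorem (as in~\cite{BCS97}*{Theorem~15.41}) with the uniform probability distribution on the blocks. What you have written is precisely an unpacking of that black box---the Salem--Spencer extraction step you sketch is exactly the content of \autoref{lem:Cop-Win} stated immediately after the theorem, and the final step is Sch\"onhage's $\tau$-theorem, as you say.
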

\begin{proof}
Consider the uniform probability distribution for all blocks and apply the main Coppersmith-Winograd theorem, for example as in \cite{BCS97}*{Theorem 15.41}.
\end{proof}
The proof above suggests that \autoref{thm:sweet_bound} is a simple consequence of the results of Coppersmith and Winogrand. However, we want to emphasize  that sweet pieces play an absolutely central role in the whole theory. In particular, the main part of \cite{CW} can be restated as the following lemma.
\begin{lem}
\label{lem:Cop-Win}
Let $T$ be a sweet piece and $\varepsilon >0$. There exists $N\gg 0$, such that $T^{\boxtimes N}$ restricts to $\lfloor(p_T-\varepsilon)\rfloor^N$ disjoint copies of $U^{\boxtimes N}$.
\end{lem}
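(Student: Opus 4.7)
The plan is to reproduce the laser-method argument of Coppersmith--Winograd, applied to the blocking inherited by $T^{\boxtimes N}$ from $T$. The Kronecker power $T^{\boxtimes N}$ comes with a natural blocking $(f_1^{\times N}, f_2^{\times N}, f_3^{\times N})$ taking values in $(\bbZ^r)^N$. Tightness of $T$ is inherited, so every block in the support is indexed by a triple $(A,B,C)\in\bigl((\bbZ^r)^N\bigr)^3$ with $A+B+C=0$ componentwise, and each such block is isomorphic to $U^{\boxtimes N}$. The uniform marginal assumption on $T$ shows that the three marginals of the uniform distribution on the support of $T^{\boxtimes N}$ are again equal and uniform on an image of size $p_T^N$.

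Next I would single out the \emph{typical} blocks. Fix a small $\delta>0$. By Chernoff/large deviations applied to the uniform distribution on the support of $T$, for $N$ large enough all but an exponentially small fraction of the blocks in the support of $T^{\boxtimes N}$ have the property that each of $A$, $B$, $C$ realizes each value of $\operatorname{im}(f_i)$ with empirical frequency within $\delta$ of $1/p_T$. This gives at least $p_T^N\cdot 2^{-o(N)}$ typical blocks, each with identical type.

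The core step, and the main obstacle, is pruning these typical blocks to a pairwise disjoint collection, meaning distinct triples that share none of their three coordinate labels in $(\bbZ^r)^N$. For this I would pick a uniformly random linear map $\varphi\colon (\bbZ^r)^N\to \bbZ/M$ for a prime $M$ chosen of size roughly $p_T^N\cdot 2^{o(N)}$, and condition on $\varphi(A)=0$. Tightness forces $\varphi(B)+\varphi(C)=0\pmod M$, so imposing further that both $\varphi(B)$ and $\varphi(C)$ lie in a fixed Salem--Spencer set $\mathcal{S}\subseteq \bbZ/M$ (a subset of density $M^{1-o(1)}$ free of non-trivial three-term arithmetic progressions, by Behrend's construction) rules out any accidental identification: if two surviving typical blocks $(A,B,C)$ and $(A',B',C')$ shared, say, their first coordinate tuple, then the tight relation would produce a three-term progression in $\mathcal{S}$ with $\varphi(B),\varphi(B'),\varphi(C)$, forcing $B=B'$ and $C=C'$. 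A first-moment computation shows that a positive proportion of choices of $\varphi$ leave at least $(p_T-\varepsilon)^N$ surviving blocks once $N$ is large and $\delta$ is small.

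Zeroing out all basis vectors whose blocking label is not used by one of these surviving triples exhibits $T^{\boxtimes N}$ as a restriction of the direct sum of $(p_T-\varepsilon)^N$ disjoint copies of $U^{\boxtimes N}$, which is the conclusion. The combinatorial heart of the argument, as expected, is the Salem--Spencer extraction in the last step; the earlier concentration estimate and hashing are standard consequences of the sweet-piece hypothesis.
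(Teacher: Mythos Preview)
The paper does not prove this lemma at all; it is stated as a restatement of the core of the Coppersmith--Winograd method, with a bare citation to \cite{CW} (and implicitly \cite{BCS97}*{\S15}). Your outline is indeed that method, so in spirit you are aligned with the paper and in fact go further. One minor slip first: your final sentence says the zeroing-out ``exhibits $T^{\boxtimes N}$ as a restriction of the direct sum''; the direction is reversed, since zeroing out basis vectors of $T^{\boxtimes N}$ exhibits the direct sum as a restriction of $T^{\boxtimes N}$, which is what the lemma asserts.

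The real gap is the Salem--Spencer step, which as you describe it does not work. You impose $\varphi(A)=0$ and $\varphi(B),\varphi(C)\in\mathcal S$; tightness then gives only $\varphi(B)+\varphi(C)=0$. If two surviving blocks share $A=A'$, you claim that $\varphi(B),\varphi(B'),\varphi(C)$ form a three-term progression in $\mathcal S$. But a progression requires $x+z=2y$ for some ordering, and from $\varphi(B)+\varphi(C)=0=\varphi(B')+\varphi(C')$ no such relation among $\varphi(B),\varphi(B'),\varphi(C)$ follows; for instance $\varphi(B)+\varphi(C)=2\varphi(B')$ would force only $\varphi(B')=0$, not $B'=B$. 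Your conditions are also asymmetric in the three factors, so even a valid argument for shared $A$-labels would say nothing about shared $B$- or $C$-labels. In the genuine argument one hashes all three coordinates on an equal footing and restricts to blocks whose hash triple lies in a set such as $\{(-2s,s,s):s\in\mathcal S\}$; the Salem--Spencer property is then invoked to show these are the \emph{only} hash triples summing to zero with each coordinate in its prescribed target set, so the outer structure collapses to a diagonal of length $|\mathcal S|$, and a separate expectation argument with $M$ chosen appropriately ensures enough diagonal entries are nonempty. See \cite{BCS97}*{\S15} for the details.
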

We emphasize that in \autoref{lem:Cop-Win} we do not need to degenerate the tensor to get a direct sum \[(U^{\boxtimes N})^{\oplus \lfloor(p_T-\varepsilon)\rfloor^N},\] but only to restrict it\footnote{However, the proof that such a restriction exists is not constructive and relies on probabilistic arguments.}.

The CW-method also provides a way to create new sweet pieces. One starts with a tight tensor $T$, with blocking $f_1,f_2,f_3$, and a probability distribution $P$ on the support of $T$. Note that the blocking on $T$ induces the blocking on $T^{\boxtimes N}$, where the indices of the blocks are sequences of length $N$ of indices for blocks of $T$. In particular, each block $B$ in $T^{\boxtimes N}$ provides three sequences of length $N$. The $i$-th sequence gives a probability distribution $p_i(B)$ on the image of $f_i$, where the probability of $a\in \im f_i$ is equal to the number of times $a$ appears in the $i$-th sequence divided by $N$.

\begin{prop}\label{prop:SP}
Let $T$ be a tight tensor with a fixed blocking.
Assume $P$ is a probability distribution on blocks $b$ in the support of $T$ that is reconstructable from its three marginals\footnote{i.e.~it is the unique probability distribution on the support with given marginals} and all three marginals are the same distributions. Assume that $N$ is such an integer that $Np$ is integral.
Then, the projection of $T^{\boxtimes N}$ onto the basis vectors belonging to the blocks $B$, such that for $i=1,2,3$ the $i$-th marginal of $P$ is $p_i(B)$, is a sweet piece.
\end{prop}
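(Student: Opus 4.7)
My plan is to verify the three defining properties of a sweet piece directly for the projected tensor, which I denote $T'$, assuming throughout that $N$ is chosen so that every value $N \cdot P(a,b,c)$ is an integer (otherwise the projection is zero and the statement is vacuous). The Kronecker power $T^{\boxtimes N}$ carries the product blocking $f_i^N \colon \calB_i^N \to (\bbZ^r)^N$, sending a pure-tensor basis element to the sequence of its $f_i$-values; a block $(A_1, A_2, A_3)$ of $T^{\boxtimes N}$ indexed by length-$N$ sequences $A_i = (a_{i,1}, \dots, a_{i,N})$ lies in its support precisely when every componentwise triple $(a_{1,k}, a_{2,k}, a_{3,k})$ lies in the support of $T$. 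The tensor $T'$ is the restriction of $T^{\boxtimes N}$ to the subspace of each factor spanned by basis vectors whose $f_i^N$-image has frequency vector equal to the $i$-th marginal of $P$, and it inherits the restricted blocking. Tightness of $T'$ is automatic: its support is contained in that of $T^{\boxtimes N}$, whose blocking sums componentwise to zero on every support block.

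Next, I show that every support block of $T'$ is isomorphic to a fixed tensor $U$. Let $B = (A_1, A_2, A_3)$ be such a block and define its empirical joint distribution $Q$ on the support of $T$ by letting $Q(a,b,c)$ be the frequency of $(a,b,c)$ among the $N$ componentwise triples of $B$. By construction the $i$-th marginal of $Q$ equals $p_i(B)$, which equals the $i$-th marginal of $P$ by the defining property of the projection. The reconstructability hypothesis then forces $Q = P$, so the multiset of component triples is identical for every support block of $T'$, whence
\[
B \cong \bigotimes_{(a,b,c) \in \mathrm{supp}(P)} T_{a,b,c}^{\boxtimes (N \cdot P(a,b,c))}
\]
up to reordering of tensor factors, and we take this common tensor as our fixed $U$.

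Finally, I verify the uniform-marginal condition by a counting argument. Given any sequence $A_1$ whose type equals the first marginal $p_1$ of $P$, the number of pairs $(A_2, A_3)$ completing it to a support block of $T'$ is obtained by partitioning the positions $\{1, \dots, N\}$ according to the value of $a_{1,k}$ and, for each $a \in \mathrm{im}(f_1)$, distributing the second-and-third coordinates over the $N \cdot p_1(a)$ corresponding positions with frequencies $N \cdot P(a, b, c)$. This gives the multinomial
\[
\prod_{a \in \mathrm{im}(f_1)} \frac{(N \cdot p_1(a))!}{\prod_{(b,c)} (N \cdot P(a,b,c))!},
\]
which depends only on the type of $A_1$ and is therefore constant across all admissible $A_1$. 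Consequently the first marginal of the uniform distribution on the support of $T'$ is uniform on its set of admissible first coordinates, and the symmetric calculation for $i = 2, 3$, combined with the hypothesized equality of the three marginals of $P$, shows that all three marginals agree and are uniform on the common image. The main obstacle is more bookkeeping than conceptual: one must organize the combinatorics so the multinomial identities emerge cleanly, with reconstructability serving as the crucial input that lets each block's empirical distribution pin down the global structure.
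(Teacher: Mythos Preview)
Your proof is correct. For the ``all blocks isomorphic'' part you proceed exactly as the paper does: you read off the empirical joint distribution $Q$ of a support block, observe that it shares all three marginals with $P$, and invoke reconstructability to force $Q=P$, whence every block is the same Kronecker product of blocks of $T$.

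For the uniform-marginal condition the two arguments diverge. The paper argues by symmetry: the group $\Sigma_N$ permutes the positions of the indexing sequences and acts transitively on sequences of a fixed type, which forces each marginal to be uniform; equality of the three marginals then follows from the hypothesis $p_1=p_2=p_3$ (the paper phrases this as an ``action of $\Sigma_3$ on the sequences themselves''). You instead compute the fiber sizes directly, obtaining the multinomial
\[
\prod_{a}\frac{(N\cdot p_1(a))!}{\prod_{(b,c)}(N\cdot P(a,b,c))!},
\]
and observe that it depends only on the type of $A_1$. Your route is more explicit and makes transparent why both hypotheses are needed: reconstructability pins down the joint count, while $p_1=p_2=p_3$ makes the three multinomials literally equal (same numerators, same denominator $\prod_{(a,b,c)}(N\cdot P(a,b,c))!$). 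The paper's version is terser and more conceptual but leaves the $\Sigma_3$ step somewhat loosely justified, since $T$ itself carries no assumed $\Sigma_3$-symmetry. Either way the conclusion is the same.
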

\begin{proof}
Since the three marginals for $T$ are the same, by the action of the permutation
group $\Sigma_N$ on the entries of the indexing sequences and the action of
$\Sigma_3$ on the sequences themselves, the second point in  \autoref{def:sweet} holds. We have to prove that all blocks in the support are isomorphic. Indeed, we claim that each one is isomorphic to the Kronecker product of $N$ blocks of $T$, where each block $b$ appears $P(b)N$ many times.
Fix a block $B$ in $T^{\boxtimes N}$, indexed by the three sequences \[ (a_1,\dots,a_N),\quad (a_1',\dots,a_N'),\quad (a_1'',\dots,a_N'').\] Then $B$ is isomorphic to the Kronecker product of $T_{a_i,a_i',a_i''}$ i.e.~blocks of $T$ indexed by $a_i,a_i',a_i''$.
The three sequences provide a probability distribution $P'$ on the support blocks of $T$. By the choice of the projection, $P'$ and $P$ have the same marginals. By the assumption on $P$, we have $P=P'$ and the claim follows.
\end{proof}
The sweet piece of \autoref{prop:SP} is denoted by $SP(T)$ or $SP_{P,N}(T)$. Even if $T$ is not tight, as we will see, the same construction, via projection of Kronecker powers, may lead to sweet pieces.
Let $T\in V_1\otimes V_2\otimes V_3$ be a (possibly not tight) tensor with a
fixed blocking. Fix a $\bbK^*$ action on each $V_i$, where the fixed basis vectors are eigenvectors and the weight $w(e)=w\bigl(f_i(e)\bigr)$ of the action on a basis vector $e$ depends only on $f_i(e)$.
We obtain induced weights of blocks, where the block indexed by
$(a_1,a_2,a_3)$ has weight \[w(a_1)+w(a_2)+w(a_3).\] Assume that for non-zero
blocks the weights are non-negative. Let $T'$ be the toric degeneration of $T$,
with the parameter $\bbK^*\ni t\rightarrow 0$.
\begin{lem}\label{lem:samesweet}
Using the notation introduced above, assume that the toric degeneration $T'$ satisfies the assumptions of \autoref{prop:SP}. If $SP(T')\neq 0$, then $SP(T)=SP(T')$.
\end{lem}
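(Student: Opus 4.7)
The plan is to observe that the marginal constraint defining a sweet piece forces every component block of a contributing $N$-block to have weight zero, so that $SP(T)$ picks up exactly the summands already present in $SP(T')$.

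First I would make the degeneration explicit: the $\bbK^*$-action scales the block indexed by $(a_1,a_2,a_3)$ by $t^{w(a_1)+w(a_2)+w(a_3)}$, and since all block weights are nonnegative by assumption, only the weight-zero blocks survive the limit $t\to 0$. Hence $T'$ is literally the sum of the weight-zero blocks of $T$, and in particular the support of $P$ consists only of such blocks. This also ensures that $SP(T)$ is well defined from the same $P$, since the hypotheses of \autoref{prop:SP} for $P$ are intrinsic to $P$ itself and transfer from $T'$ to $T$.

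The key step is a short weight computation. Let $Q := P_1 = P_2 = P_3$ denote the common marginal. Averaging the identity $w(a_1)+w(a_2)+w(a_3)=0$ over $P$ yields $3\sum_a Q(a)w(a)=0$, so $\sum_a Q(a)w(a)=0$. Then, for any block $B$ of $T^{\boxtimes N}$ indexed by sequences $(a_j), (a'_j), (a''_j)$ whose three empirical marginals $p_i(B)$ all equal $Q$, the total weight of $B$ is
\[
\sum_{j=1}^N\bigl(w(a_j)+w(a'_j)+w(a''_j)\bigr) \;=\; 3N\sum_a Q(a)w(a) \;=\; 0.
\]
Since each summand on the left is nonnegative, each factor $T_{a_j,a'_j,a''_j}$ must itself have weight zero, and therefore lie in the support of $T'$. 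The projections defining $SP(T)$ and $SP(T')$ thus select exactly the same summands of $T^{\boxtimes N}$ and $(T')^{\boxtimes N}$, which proves the claim.

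The one slightly delicate point I foresee is notational: the equality ``$P_1=P_2=P_3$'' must be interpreted on a common index set, while a priori the marginals live on the possibly distinct sets $\im f_i$. However, this identification is already built into the hypothesis that $T'$ satisfies \autoref{prop:SP}, so it does not represent a genuine obstacle; once it is fixed, everything else reduces to the nonnegativity of weights together with an elementary averaging argument.
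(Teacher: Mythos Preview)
Your argument is correct and rests on the same linear identity the paper uses, namely that the total weight of a block of $T^{\boxtimes N}$ is determined by the three empirical marginals alone. You exploit this directly: from $\sum_a Q(a)w(a)=0$ you get that every block selected by the projection has total weight zero, and nonnegativity of the summands forces each factor to be a weight-zero block of $T$, hence a block of $T'$. The paper instead runs a contradiction argument, comparing the empirical block-distribution of a hypothetical $B\in SP(T)\setminus SP(T')$ with that of some $B'\in SP(T')$; since both have the same marginals, the paired sum $\sum(\text{weight})\cdot(P-P')$ vanishes from the marginal side but is strictly positive from the block side. Your packaging is shorter and avoids the auxiliary $P'$ and the reconstructability hypothesis of \autoref{prop:SP} entirely.

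One small point worth making explicit in your write-up: the step ``since each summand on the left is nonnegative'' uses that every factor $T_{a_j,a'_j,a''_j}$ is a \emph{nonzero} block of $T$ (this is where the standing assumption on nonnegative weights applies), so you should restrict attention to blocks $B$ that are nonzero in $T^{\boxtimes N}$; zero blocks contribute nothing to either sweet piece and can be ignored.
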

\begin{proof}
Clearly, $SP(T)$ and $SP(T')$ agree on blocks where $SP(T')$ is not zero. So it remains to show that if a block is zero in $SP(T')$, then it is also zero in $SP(T)$. Pick a zero block in $SP(T')$ indexed by three sequences. By contradiction, assume that this block is non-zero in $SP(T)$. Consider the probability distribution $P$ on the blocks of $T$ induced by the three sequences. As the block was nonzero in $SP(T)$ and zero in $SP(T')$, the probability distribution must be supported on blocks of non-negative weight in $T$ and positive on at least one block of positive weight. Let $P'$ be the probability distribution on the blocks of $T$ induced by the three sequences labeling a block in the support of $SP(T')$. Note that $P$ and $P'$ have the same marginals. Thus, $P-P'$ is negative only on blocks with weight zero, is positive on at least one block with positive weight and has all three marginals equal to zero.

Let $S$ be the sum of the weights of all the blocks, each one multiplied by the value $P-P'$ attains on that block. Looking at the marginals, we see that $S=0$. However, by summing over all the blocks we see that $S>0$, which is a contradiction and concludes the proof.
\end{proof}
\begin{defn}
    Let $G$ be a finite abelian group.
We identify $G$ with the basis of $\bbK^G$.
We define $T_G\in \bbK^G\otimes\bbK^G\otimes\bbK^G$ in this basis by
\[T_G:=\sum_{g_1+g_2=g_3\in G} g_1\otimes g_2\otimes g_3.\]
By the discrete Fourier transform this is a tensor of rank $|G|$ and thus is isomorphic to the unit tensor.
The tensor $CW_n\in \bbK^n\otimes\bbK^n\otimes\bbK^n$ is defined as
\[CW_n\coloneq \sum_{i=1}^n e_1\otimes e_i\otimes e_i+\sum_{i=2}^n e_i\otimes e_1\otimes e_i+\sum_{i=2}^{n-1} e_i\otimes e_i\otimes e_n.\]
\end{defn}
The next lemma and corollary can be seen as variants or special cases of \cite{alman2018further} and \cite{hoyois2021hermitian}*{Proposition~4.1}. The first paper explicitly states that $T_G$ can replace $CW_{|G|}$ for the bounds on $\omega$. In the second, the authors provide degenerations of a larger class of tensors to $CW_n$. The main difference with the second article is that we can identify sweet pieces of two tensors. Unlike the first article, we can later apply our results to obtain new rank estimates of the sweet pieces of both tensors.
\begin{lem}\label{lem:sweetCWnobrank}
For any abelian group $G$ of cardinality at least three, the structure tensor $T_G$ has a $\bbK^*$ degeneration into the big Coppersmith-Winograd tensor $CW_{|G|}$, by using the blocking as in \cite{CW}.
\end{lem}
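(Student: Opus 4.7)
The plan is to construct the $\bbK^*$-degeneration in two stages: a fixed base change, followed by an explicit one-parameter subgroup action adapted to the CW blocking.

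First I would apply the discrete Fourier transform on $G$. Since $\bbK$ is algebraically closed of characteristic zero and $G$ is finite abelian, Maschke's theorem together with Artin--Wedderburn produce an isomorphism $\bbK[G]\cong \bbK^{|G|}$ of $\bbK$-algebras, realised by a fixed change of basis (independent of any parameter). Under this basis, the structure tensor $T_G$ becomes the unit tensor $\langle n\rangle := \sum_{i=1}^n e_i\otimes e_i\otimes e_i$, where $n=|G|$. In particular, as a tensor, $T_G$ depends on $G$ only through $|G|$ up to the natural $\mathrm{GL}(\bbK^n)^{\times 3}$-action, so it suffices to exhibit a $\bbK^*$-degeneration of $\langle n\rangle$ to $CW_n$ that respects the CW blocking $\{e_1\}\sqcup\{e_2,\dots,e_{n-1}\}\sqcup\{e_n\}$.

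Next I would feed in a standard border-rank-$n$ decomposition of $CW_n$. For suitable vectors $a_i(t), b_i(t), c_i(t)\in \bbK^n$ depending polynomially on $t$, one has
\[
    CW_n \;=\; \lim_{t\to 0}\, t^{-2} \sum_{i=1}^n a_i(t)\otimes b_i(t)\otimes c_i(t).
\]
Each triple $(a_i(t), b_i(t), c_i(t))$ can be arranged to be the image of $e_i$ under triangular operators $A(t), B(t), C(t)\in \mathrm{GL}(\bbK^n)$ whose diagonal entries carry the CW-block weights $t^0, t^1, t^2$ on $\{e_1\}$, $\{e_2,\dots,e_{n-1}\}$, $\{e_n\}$ respectively. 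The one-parameter subgroup $t\mapsto (A(t), B(t), C(t))$ then sends $\langle n\rangle$ to $t^2\cdot CW_n+O(t^3)$, and after the overall rescaling by $t^{-2}$, the limit as $t\to 0$ equals $CW_n$. Composing this degeneration with the Fourier change of basis from the first stage yields the required $\bbK^*$-degeneration of $T_G$ to $CW_{|G|}$, with the associated blocking inherited directly from the CW blocking on $\bbK^n$.

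I expect the main technical point to be choosing the triangular operators $A(t), B(t), C(t)$ so that their limit action on $\langle n\rangle$ realises $CW_n$ precisely in the (asymmetric) form defined in the paper, whose third sum only ranges over middle indices. Matching each of the three sums of $CW_n$ with the leading order of the relevant mixed product $A(t)e_i\otimes B(t)e_i\otimes C(t)e_i$ is a coefficient-by-coefficient verification that is routine once the block structure is fixed. An alternative, more conceptual route is via the Hilbert--Mumford criterion applied to the $\mathrm{GL}(\bbK^n)^{\times 3}$-orbit closure of $\langle n\rangle$, which contains $CW_n$ since the latter is of minimal border rank; the CW blocking then specifies the one-parameter subgroup realising the limit.
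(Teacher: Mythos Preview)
Your proposal has a genuine gap. In the paragraph preceding the lemma, ``$\bbK^*$-degeneration'' is set up as a \emph{toric} one: $\bbK^*$ acts diagonally on each $V_i$, the fixed basis vectors are eigenvectors, and their weights depend only on the block; the degeneration is then the projection onto blocks of total weight zero. Your construction does not produce this. Passing to $\langle n\rangle$ via the Fourier transform discards precisely the group-element basis that carries the CW blocking, and on $\langle n\rangle=\sum_i e_i^{\otimes 3}$ every diagonal $\bbK^*$-action merely rescales the individual summands, so its only toric limits are smaller unit tensors, never $CW_n$. Your triangular families $A(t),B(t),C(t)$ are neither diagonal nor a one-parameter subgroup (they will not satisfy $A(st)=A(s)A(t)$), so they do not fit the framework the lemma refers to; and Hilbert--Mumford concerns the point $0$ in an orbit closure, not an arbitrary boundary point, so it does not hand you a one-parameter subgroup landing on $CW_n$, let alone one compatible with a prescribed blocking. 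At best your argument shows $CW_n\in\overline{\GL_n^{\times 3}\cdot T_G}$, which is weaker than what is claimed and insufficient for the intended application to \autoref{lem:samesweet}.

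The paper's argument avoids all of this by staying in the group-element basis of $T_G$: pick any nonneutral $g\in G$, assign weights $0,1,2$ to $e$, to $G\setminus\{e,g\}$, and to $g$ on the first two factors, and the negated weights on the third. Every term $g_1\otimes g_2\otimes g_3$ with $g_1+g_2=g_3$ then has nonnegative total weight, and the weight-zero part is isomorphic to $CW_{|G|}$. This is exactly the toric degeneration ``using the CW blocking'' that the statement asks for, and it is what feeds into \autoref{lem:samesweet} to give \autoref{cor:sweetCW=G}.
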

\begin{proof}
Let $g\in G$ be a non-neutral element. Assign weight $0$ to the neutral element, weight $1$ to all elements different from $g$ and the neutral element, and weight $2$ to $g$ for the first two copies of $\bbK^G$ and minus these weights for the third copy. The toric degeneration is isomorphic to $CW_{|G|}$.
\end{proof}
\begin{cor}\label{cor:sweetCW=G}
The tensors $T_G$ and $CW_{|G|}$ have the same sweet pieces. In particular, $SP_N(CW_n)$ has \emph{rank} at most $n^N$.
\end{cor}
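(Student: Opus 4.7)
The plan is to combine Lemma~\ref{lem:samesweet} with Lemma~\ref{lem:sweetCWnobrank} for the equality of sweet pieces, and then exploit the fact that $T_G$ has rank exactly $|G|$ to get the rank bound on the $CW$-side.

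First I would verify the hypothesis of Lemma~\ref{lem:samesweet}. The tensor $T_G$ is tight (with the blocking $f_1=f_2=\id_G$ and $f_3(g)=-g$, so that on every nonzero block $g_1+g_2+(-g_3)=0$), and it is symmetric in its three factors so any product distribution that is reconstructable from its marginals will have three equal marginals. The $\bbK^*$-action from the proof of Lemma~\ref{lem:sweetCWnobrank} assigns nonnegative weights (namely $0,1,2$) to the basis vectors and has the property that the blocks in the support of the limit tensor $CW_{|G|}$ are precisely the weight-zero blocks; moreover that limit is again a tight blocking, its support is symmetric in the three factors, and $SP(CW_{|G|})$ is a nonzero sweet piece (this is the content of the Coppersmith--Winograd setup). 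So Lemma~\ref{lem:samesweet} applies with $T=T_G$ and $T'=CW_{|G|}$ and yields $SP(T_G)=SP(CW_{|G|})$; taking Kronecker powers, the same holds for $SP_{P,N}$.

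For the rank bound, the key observation is that $T_G$ is the multiplication tensor of the group algebra $\bbK[G]$, so by the discrete Fourier transform (over our algebraically closed field of characteristic zero) it has rank exactly $|G|=n$. Consequently,
\[
\rank\bigl(T_G^{\boxtimes N}\bigr)\leq \rank(T_G)^N = n^N.
\]
By construction (Proposition~\ref{prop:SP}), the sweet piece $SP_{P,N}(T_G)$ is obtained from $T_G^{\boxtimes N}$ by projecting onto a coordinate subspace of each tensor factor, which is a restriction; rank is nonincreasing under restriction, so
\[
\rank\bigl(SP_{P,N}(T_G)\bigr)\leq n^N.
\]
Combining with the equality of sweet pieces from the previous paragraph gives $\rank\bigl(SP_N(CW_n)\bigr)\leq n^N$.

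I expect the only nontrivial step to be confirming that the toric degeneration from Lemma~\ref{lem:sweetCWnobrank} genuinely fits the framework of Lemma~\ref{lem:samesweet}, i.e.\ that the induced blocking on the limit $CW_{|G|}$ meets the assumptions of Proposition~\ref{prop:SP} and produces a nonzero sweet piece (otherwise the comparison in Lemma~\ref{lem:samesweet} is vacuous). Everything else is either the DFT rank computation or the elementary monotonicity of rank under restriction.
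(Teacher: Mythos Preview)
Your proposal is correct and follows the intended route: the corollary has no explicit proof in the paper precisely because it is meant to fall out of \autoref{lem:samesweet} and \autoref{lem:sweetCWnobrank}, together with the DFT rank computation for $T_G$, and that is exactly what you do.

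Two small clean-ups. First, the hypothesis of \autoref{lem:samesweet} is on the \emph{degeneration} $T'=CW_{|G|}$, not on $T=T_G$: it is $CW_{|G|}$ that must be tight with a probability distribution reconstructable from equal marginals (this is the standard CW setup, and is what you allude to later in the paragraph). Your opening verification that $T_G$ is tight via the fine blocking $f_i=\id_G$ is harmless but not what is needed; the blocking in play is the coarse CW blocking by the weights $0,1,2$ from \autoref{lem:sweetCWnobrank}, and one checks there that all nonzero blocks of $T_G$ have nonnegative weight. Second, the sentence ``taking Kronecker powers, the same holds for $SP_{P,N}$'' is redundant: the conclusion $SP(T)=SP(T')$ of \autoref{lem:samesweet} \emph{is} the statement $SP_{P,N}(T_G)=SP_{P,N}(CW_{|G|})$, since the sweet piece is already constructed from $T^{\boxtimes N}$.
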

The above corollary allows us to get better bounds for the rank of $SP(CW_n)$.
\begin{defn}
Let $T$ be a tensor with a blocking and a probability distribution $P$ on the blocks. A \emph{chimney} is the image of $T^{\boxtimes N}$ under the projection map: \[V_1^{\otimes N}\otimes V_2^{\otimes N}\otimes V_3^{\otimes N}\rightarrow V_1'\otimes V_2'\otimes V_3^{\otimes N},\] where, for $i=1,2$, the subspace $V_i'$ is spanned only by basis vectors indexed by sequences that produce the same probability distribution as the $i$-th marginal of $P$.
By changing $i=1,2$ to $i=1,3$ or $i=2,3$ we get two other similar chimneys.
\end{defn}
Note that if we think of $T^{\boxtimes N}$ as a three-dimensional cuboid, we should think of the sweet piece as the intersection of three chimneys.
\begin{prop}\label{prop:sweetRank}
Consider a probability distribution on the six blocks in the support of $CW_n$ that assigns $p$ to each of the three large blocks of format $(n-2)^2\times 1$ and $q=1/3-p$ to each of the three small blocks of format $1^3$.
The rank of $SP_{N,P}(CW_n)$ is smaller than \[n^N-\binom{N}{\frac{2}{3}N+1}(n-1)^{\frac{1}{3}N-1}.\] In particular, it is smaller than border rank of $(CW_n)^{\boxtimes N}$.
\end{prop}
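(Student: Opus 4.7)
The plan is to bootstrap from \autoref{cor:sweetCW=G} and then exploit the Fourier decomposition of $T_G$. By \autoref{cor:sweetCW=G}, it suffices to bound $\rk(SP_{N,P}(T_G))$ for a cyclic group $G$ of order $n$, since the sweet pieces of $T_G$ and $CW_n$ coincide. The tensor $T_G$ admits the DFT rank decomposition
\[
T_G = \frac{1}{n}\Biggl(v_0\otimes v_0\otimes v_0 + \sum_{k=1}^{n-1} v_k\otimes v_k\otimes v_{-k}\Biggr),\qquad v_k=\sum_{g\in G}\zeta_n^{kg}e_g,
\]
so the Kronecker power yields $T_G^{\boxtimes N} = T_{G^N} = n^{-N}\sum_{\vec k\in G^N} v_{\vec k}\otimes v_{\vec k}\otimes v_{-\vec k}$, expressing it as a sum of $n^N$ rank-one terms. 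The sweet piece $SP_{N,P}(T_G)$ is the image of $T_{G^N}$ under $\pi_1\otimes\pi_2\otimes\pi_3$, where $\pi_i$ is the projection onto the chimney $C_i\subset G^N$ cut out by the marginal distribution of $P$; applying this projection term by term gives a (possibly redundant) rank-$n^N$ expression for the sweet piece.

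Next I stratify the character tuples $\vec k \in G^N$ by $m(\vec k) := \#\{i : k_i \neq 0\}$, the number of coordinates carrying a non-trivial character of $G$. Characters with $m(\vec k) = m$ number $\binom{N}{m}(n-1)^m$, matching the binomial expansion $n^N = \sum_m \binom{N}{m}(n-1)^m$. The goal is to show that those characters with $m(\vec k) \le (p+q)N - 1 = N/3 - 1$ contribute terms that are redundant in the projected sum (either vanishing after projection, or lying in the linear span of terms already contributed by higher-$m$ characters). Counting only the top stratum $m = N/3 - 1$ yields at least $\binom{N}{N/3-1}(n-1)^{N/3-1} = \binom{N}{2N/3+1}(n-1)^{N/3-1}$ redundant summands, giving the claimed saving. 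The ``in particular'' clause then follows because the border rank of $(CW_n)^{\boxtimes N}$ equals $n^N$ (it is concise and $\brk(CW_n)\le n$ by \autoref{lem:sweetCWnobrank}).

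The main obstacle is the linear-dependence claim for characters with few non-trivial coordinates. The heuristic is that if $\vec k$ has many trivial coordinates $k_i = 0$, then $v_{\vec k}$ decomposes as a tensor product of constant vectors along those coordinates with character vectors along the others; after restriction to the chimney $C_i$, the marginal constraint forces a fixed distribution of weights on each coordinate of the chimney, so such low-$m$ character functions, restricted to $C_i$, become expressible as sums of higher-$m$ ones evaluated on the same marginals. Making this precise amounts to an explicit Vandermonde-type computation in the chimney coordinates, using the specific weight assignment from \autoref{lem:sweetCWnobrank}: the ``trivial'' coordinates contribute a constant block which matches the trivial character on the single-element weight classes (weight $0$ and weight $2$), and the $n-1$ remaining character choices on the non-trivial coordinates span a space of dimension exactly $(n-1)$ on each weight-1 coordinate. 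Carrying out this algebraic identification is where the technical bulk of the proof lies.
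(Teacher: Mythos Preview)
Your reduction to $T_G$ via \autoref{cor:sweetCW=G} is exactly how the paper begins, but from that point on the two arguments diverge, and yours is left incomplete at the crucial step. You stratify the $n^N$ DFT summands by the number $m(\vec k)$ of nontrivial coordinates and assert that those with $m(\vec k)\le (p+q)N-1$ become ``redundant'' after projecting to the chimneys. You then concede that establishing this linear dependence ``is where the technical bulk of the proof lies'' and offer only a heuristic. That is the gap: there is no evident reason the projected vectors $\pi_i(v_{\vec k})$ for small $m$ should lie in the span of those for large $m$, and no Vandermonde-type identity is supplied. As it stands, the proposal is a strategy, not a proof.

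The paper avoids this difficulty entirely by using the \emph{substitution method} rather than manipulating the DFT decomposition. It works with the chimney (projecting only the first two factors) and observes a combinatorial constraint: an entry of $T_{G^N}$ indexed by $(g_i),(g_i'),(g_i'')$ is nonzero only if $g_i+g_i'=g_i''$ coordinatewise, so whenever $g_i=e$ we have $g_i''=g_i'$. In the chimney the first sequence has $(p+2q)N$ neutral entries and the second has only $qN$ entries equal to the distinguished element $g$; hence among the positions where $g_i=e$ at least $(p+2q)N-qN=(p+q)N$ of the $g_i''$ are $\neq g$. Consequently any third-factor slice indexed by a sequence with more than $(2p+2q)N$ copies of $g$ is identically zero over the chimney, and in particular over the sweet piece. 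There are at least $\binom{N}{(2p+2q)N+1}(n-1)^{(p+q)N-1}$ such sequences. Applying substitution to $(T_G)^{\boxtimes N}$ along each of these zero slices drops the rank by one without touching the sweet piece, giving the bound.

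So the paper's argument is a direct zero-slice count plus substitution; no linear-algebra identity among projected character vectors is needed.
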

\begin{proof}
By \autoref{cor:sweetCW=G} it is enough to prove the statement for $SP_{N,P}(T_G)$ for $|G|=n$. Note that $(T_G)^{\boxtimes N}$ is a tensor of minimal rank $n^N$.

Let us consider the corresponding chimney with entries indexed by triples of sequences of group elements, where for the first two sequences the neutral element appears $(p+2q)N$ times and the distinguished group element $g\in G$ appears $qN$ times. Note that the third sequence is arbitrary, as we are working with the chimney, not the sweet piece.

Note also that an entry in the chimney indexed by $(g_i), (g_i'), (g_i'')$ is nonzero only if:
for any $i=1,\dots, N$ we have the implication:
if $g_i=e$ then $g_i'=g_i''$. In particular, as we have $(p+2q)N$ neutral elements in the first sequence and only $qN$ elements equal to $g$ in the second sequence, the entries are always zero if the third sequence contains less than $(p+q)N$ elements distinct from $g$. In particular, there are at least \[\binom{N}{(2p+2q)N+1}(n-1)^{(p+q)N-1}=\binom{N}{\frac{2}{3}N+1}(n-1)^{\frac{1}{3}N-1}\] sequences indexing layers of the chimney that have only zero entries.

We now apply the classical substitution method (\cite{landsberg2017abelian}*{\S 3},  \cite{pan1966means}) for the tensor $(T_G)^{\boxtimes N}$ and the layers that are zero in the chimney. Note that we do not change the sweet piece, since the layers, that are nonzero in the whole tensor, are zero over the sweet piece. The result follows.
\end{proof}
\begin{rem}
Clearly, the previous estimate can be made much better. We believe that estimating the ranks and the border ranks of sweet pieces may lead to new discoveries about the bounds on $\omega$. We also emphasize that if we bound the ranks of sweet pieces, all the barrier results, as in  \cite{CVZ21}
do not apply. In particular, it is in principle possible to prove $\omega=2$, even using arbitrarily large Coppersmith-Winograd tensors, provided we find better bounds on the ranks of sweet pieces.
\end{rem}

Our next aim is to explain relations between sweet pieces and smoothability of algebras.
We note that $CW_n$ is the structure tensor of the algebra $A$ apolar to the quadric \[f\coloneqq x_1^2+\dots+x_{n-2}^2.\]
Hence, $CW_n^{\boxtimes N}$ is the structure tensor of the algebra $A^{\otimes N}$ apolar to $f^{\boxtimes N}$. We note that $A$ comes with the standard grading and has Hilbert function $(1,n-2,1)$. Thus $A^{\otimes N}$ is multigraded and with respect to the total grading \[\dim_{\bbK} (A^{\otimes N})_i=\sum_{j=0}^{\lfloor \frac{i}{2}\rfloor} \binom{N}{j}\binom{N-j}{i-2j}(n-2)^{i-2j}.\]
The grading on $A$ corresponds exactly to the blocking of $CW_n$ introduced in \cite{CW}. The total grading on $CW_n^{\boxtimes N}$ corresponds to the blocking on Kronecker powers of this tensor, which has been used most often so far.

\begin{lem}
Let $T$ be a structure tensor of a graded algebra $A$, with the blocking induced by the grading. Consider the total grading on $A^{\otimes N}$. Let $P$ be a probability distribution on the blocks of $T$ that is reconstructible from its marginals $P_1$, $P_2$, $P_3$ that have the same distribution. Let $N$ be such an integer that $N\cdot P$ is integral.
Then, there exists a degree $i$, such that $SP_{P,N}(T)$ is a restriction of the tensor representing the map:
\[(A^{\otimes N})_i\times (A^{\otimes N})_i\rightarrow (A^{\otimes N})_{2i}.\]
\end{lem}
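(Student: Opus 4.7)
The plan is to show that the sweet piece lies inside a single bihomogeneous component of the multiplication on $A^{\otimes N}$, and is therefore automatically a restriction of that component.

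First I would unpack the blocking. The grading $A=\bigoplus_{d}A_{d}$ gives a blocking in which the first two factors of $T$ receive the value $\deg$, while the third receives a complementary value (for instance $-\deg$) making $T$ tight, with nonzero blocks arising exactly from products $A_{a}\cdot A_{b}\subseteq A_{a+b}$. Extended componentwise, this is the blocking on $T^{\boxtimes N}$ induced by the total grading on $A^{\otimes N}$: a basis vector $e_{1}\otimes\cdots\otimes e_{N}$ with $e_{s}\in A_{d_{s}}$ has total degree $\sum_{s}d_{s}$.

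Next I would identify the integer $i$. A block of $SP_{P,N}(T)$ is indexed by three length-$N$ sequences $\vec{a},\vec{b},\vec{c}$ whose empirical distributions equal the marginals $P_{1},P_{2},P_{3}$, by the construction in \autoref{prop:SP}. The sum of a sequence is determined by its empirical distribution, so the equality $P_{1}=P_{2}$ assumed in \autoref{prop:SP} forces $\sum_{s}a_{s}=\sum_{s}b_{s}=:i$; hence the first two Kronecker factors of $SP_{P,N}(T)$ lie in $(A^{\otimes N})_{i}$. The multiplication constraint $c_{s}=a_{s}+b_{s}$ in each Kronecker slot then gives $\sum_{s}c_{s}=2i$, so the third Kronecker factor lies in $(A^{\otimes N})_{2i}$.

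Finally, the restriction of $T^{\boxtimes N}$ to the subspace $(A^{\otimes N})_{i}\otimes(A^{\otimes N})_{i}\otimes(A^{\otimes N})_{2i}$ is by construction the tensor representing the map $(A^{\otimes N})_{i}\times(A^{\otimes N})_{i}\to(A^{\otimes N})_{2i}$, and composing with the further projection onto basis vectors whose empirical degree distributions match $P_{1},P_{2},P_{3}$ exhibits $SP_{P,N}(T)$ as a restriction of this multiplication tensor. The main step requiring care is fixing an identification of the image sets of $f_{1},f_{2},f_{3}$ so that the hypothesis $P_{1}=P_{2}$ in \autoref{prop:SP} literally forces $\sum_{s}a_{s}=\sum_{s}b_{s}$; once this convention is pinned down (for instance $f_{1}=f_{2}=\deg$ with the compatible identification on $f_{3}$), the conclusion is an immediate consequence of the grading of $A^{\otimes N}$.
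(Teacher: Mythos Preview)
Your proof is correct and follows essentially the same approach as the paper's: both identify $i$ as the common total degree $\sum_{d} P_{1}(d)\,d\,N$ forced on the first two factors by the equal-marginals hypothesis of \autoref{prop:SP}, and conclude that the sweet piece sits inside the $(i,i,2i)$ graded component of multiplication on $A^{\otimes N}$. The paper's proof is terser, recording only the formula for $i$ and leaving the second and third factors implicit; your version spells out the degree-$2i$ constraint on the output and the care needed to make $P_1=P_2$ a literal equality, which are worthwhile clarifications but not new ideas.
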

\begin{proof}
An indexing sequence of $N$ elements of $A$ is an indexing sequence for the first coordinate in $SP(T)$ if and only if elements of degree $d$ appear $P_1(d)N$ times in it. Thus the total degree of such a sequence is $i:=\sum_d P_1(d)dN$.
\end{proof}
In the setting of the previous lemma
$SP(T)$ may also be seen as a restriction of the tensor that is the structure tensor of the subalgebra of $A^{\otimes N}$ generated in degree $i$. Such algebras are in most cases of much smaller dimension, even asymptotically with $N$.
\begin{exam}\label{exam:Vero}
Let $A=\bbK [x,y]/(x^2,y^2)$ be the algebra representing the $CW_4$ tensor. Let $P$ be the probability distribution equal to $1/3$ on the large blocks of the support of format $2^2\times 1$. The Veronese subalgebra of $A^{\otimes 3k}$ generated in degree $k$ has dimension \[2+2\sum_{a=0}^k \binom{3k}{a}\cdot\binom{3k-a}{2k-a}.\] The sweet piece $SP_{P,3k}(CW_4)$ is a subtensor of the tensor associated to that Veronese algebra.
\end{exam}
One way to realize the Veronese subalgebra of $A^{\otimes N}$ generated in degree $i$ is to fix a $G_i:=\bbZ/i\bbZ$ action on $A$, where elements of degree one are acted upon with $1\in G_i$. We can also think of it as multiplication by the complex $i$-th root of unity. Then the Veronese subalgebra we are interested in is given by the invariants $(A^{\otimes N})^{G_i}$. Unfortunately, as we have seen in \autoref{exm:VerNotSmooth}, even if $A$ is smoothable and so is $ A^{\otimes N}$, the algebra of invariants does not have to be.

Let $T\in (\bbK^{n})^{\otimes 3}$ be a tensor giving rise to sweet piece $SP(T)$, where each block is a matrix multiplication tensor.
Note that the only asymptotically non-optimal estimation in the Coppersmith-Winograd method is precisely the point where the size of $T^{\otimes N}$, that is $n^N$, is usually much larger than the size of $SP(T)$.
In particular, either
\begin{itemize}
\item if $SP(T)$ grows in size as $n^N$, as it is the case for the symmetric
    tensor $T\in (\bbK^3)^{\otimes 3}$ given by the monomial $xyz$, and $T$ is of minimal asymptotic rank (cf.~\cite[Remark 15.44]{BCS97}), or
\item if one can prove that $SP(T)$ has minimal asymptotic rank,
\end{itemize}
then $\omega=2$, by \autoref{thm:sweet_bound}. Further,
asymptotically in $N$ improvements on the (asymptotic) border rank of $(A^{\otimes N})^{G_i}$ would lead to improvements on the bounds on $\omega$.

In the last part of the article we exhibit relations of sweet pieces to NP-hard problems in complexity theory.
Consider the algebra $B:=\bbK[x]/(x^2)$ and let $T_B\in (\bbK^2)^{\otimes 3}$ be the associated tensor. It is natural to identify a basis of $\bbK^2$ with subsets of the set with one element.
Thus the tensor $T_B$ has the three entries equal to one exactly in $3$-way partitions of the one element set \cite{bjorklund2023asymptotic}.
We note that the algebra $A$ of \autoref{exam:Vero} is $B^{\otimes 2}$.
Consider the tensor \[T_N:=T_B^{\boxtimes N},\] where we now identify the basis of $\bbK^{2^N}$ with subsets of the set with $N$ elements. Note that $T_N$ encodes $3$-way partitions of the
set with $N$ elements.
Let us fix $N:=3k$ and the uniform probability distribution on the three entries of $T_B$. We obtain the sweet piece $SP_N{T_B}$ that is exactly equal to the tensor $T_N$ from \cite{pratt2023stronger}.
This sweet piece is also the tensor representing the degree one multiplication in the $k$-th Veronese subalgebra of $B^{\otimes N}$. The dimension of this Veronese subalgebra equals twice
the size of the sweet piece plus two. Combining this observation with \cite{pratt2023stronger}*{Corollary 1.12} we obtain the following corollary.
\begin{cor}
If the asymptotic rank of a sweet piece coming from tensors with minimal border rank has minimal asymptotic rank or if Veronese subalgebras of smoothable algebras give rise to tensors of minimal
asymptotic rank, then the set covering conjecture from \cite{cygan2016problems} is false.
\end{cor}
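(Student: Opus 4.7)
The plan is to combine the identification of the sweet piece $SP_{N,P}(T_B)$ with the tensor $T_N$ from~\cite{pratt2023stronger}, established in the paragraph preceding the corollary, with the consequences recorded in~\cite{pratt2023stronger}*{Corollary~1.12}. Concretely, fix $B=\bbK[x]/(x^2)$, $N=3k$, and the uniform probability distribution $P$ on the three nonzero entries of $T_B$. From the discussion above, $SP_{N,P}(T_B)=T_N$, and this tensor is simultaneously the degree-one multiplication tensor of the $k$-th Veronese subalgebra of $B^{\otimes N}$. Hence any asymptotic rank bound on either side translates into a bound on $T_N$.

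Next I would verify that each of the two hypotheses applies to this specific object. On one hand, $T_B$ is a tensor of minimal border rank (it is $1_A$-generic with $\dim_{\bbK} B=2$), so $SP_{N,P}(T_B)$ is a sweet piece coming from a tensor of minimal border rank; the first hypothesis therefore forces $T_N$ to have minimal asymptotic rank. On the other hand, $B$ is smoothable and smoothability is preserved under tensor products, so $B^{\otimes N}$ is smoothable; thus the second hypothesis applies to the $k$-th Veronese subalgebra of $B^{\otimes N}$ and again yields minimal asymptotic rank of $T_N$.

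Finally, I would invoke~\cite{pratt2023stronger}*{Corollary~1.12}, which states that sufficiently strong (asymptotic) rank bounds on the tensors $T_N$ provide algorithms for a set-cover-type problem with running time that violates the set covering conjecture from~\cite{cygan2016problems}. Minimal asymptotic rank of $T_N$ is in particular strong enough to trigger that conclusion, so the set covering conjecture fails under either hypothesis.

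The main obstacle, should one actually want to carry this out rather than cite it, would be checking that Pratt's Corollary~1.12 really has minimal asymptotic rank as its sufficient condition, and that the identification of the two realizations of $T_N$ (sweet piece versus Veronese subalgebra multiplication) is compatible with the notion of asymptotic rank used in~\cite{pratt2023stronger}; but both of these are already addressed in the discussion immediately preceding the statement, so the proof reduces to a short chain of citations.
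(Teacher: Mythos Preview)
Your proposal is correct and follows essentially the same approach as the paper: the paper does not give a standalone proof but simply states that the corollary follows by combining the preceding identification of $SP_{N,P}(T_B)$ with Pratt's tensor $T_N$ (and with the degree-one multiplication tensor of the $k$-th Veronese subalgebra of $B^{\otimes N}$) together with~\cite{pratt2023stronger}*{Corollary~1.12}. Your write-up merely spells out the verification that both hypotheses indeed apply to this specific instance, which is exactly the intended argument.
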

We note that by \cite{pratt2023stronger}*{Corollary 1.12} one does not need exponential improvements in the border rank of the tensors $T_N$ to disprove the set covering conjecture. The obvious bound
is $8^k$, while the current best known bound is $\frac{1}{2}8^k$ \cite{pratt2023stronger}*{p.~4, point (4)}. In fact, it would be enough to prove that asymptotic rank is at most
\[\frac{2\cdot 8^k}{9\cdot k}.\] Below we provide a modest improvement on the upper bound.
\begin{prop}\label{prop:Prattimproved}
The rank of the tensor $T_N$ is at most \[\frac{1}{2}8^{k}-\sum_{i=k+1}^{\lfloor N/2\rfloor} \binom{N}{2i}.\]
\end{prop}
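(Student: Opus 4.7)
My plan is to apply the substitution method in the style of \autoref{prop:sweetRank} to a rank-$\frac{1}{2}8^k$ decomposition of $T_N$, and to save further rank by identifying zero ``chimney layers'' indexed by high-degree basis vectors. First I would realize $T_N$ as the restriction of the structure tensor $T_A$ of the algebra $A=B^{\otimes N}=\bbK[x_1,\ldots,x_N]/(x_i^2)$ to the graded pieces $A_k$, $A_k$, $A_{2k}$. Pratt's bound $\frac{1}{2}8^k=2^{N-1}$ can be recovered by deforming $A$ to the smoothing $A_\varepsilon=\bbK[x_i]/(x_i^2-\varepsilon^2)$: its $2^N$ orthogonal idempotents $e_\sigma$ (indexed by $\sigma\in\{\pm 1\}^N$) diagonalize multiplication, and the involution $\sigma\leftrightarrow-\sigma$ identifies terms pairwise in the projection to $A_k\otimes A_k\otimes A_{2k}$, yielding $2^{N-1}$ rank-one terms of the form $\pi_k(e_\sigma)^{\otimes 2}\otimes\pi_{2k}(e_\sigma)$.

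Next, following the proof of \autoref{prop:sweetRank}, I would work with an ambient rank decomposition living in $A\otimes A\otimes A$ rather than only in $A_k\otimes A_k\otimes A_{2k}$, and use that the chimney layers of this ambient decomposition indexed by basis vectors $x_T$ with $|T|$ even and $|T|>2k$ become zero after the projection onto $A_{2k}$: multiplication $A_k\cdot A_k$ lands in $A_{2k}$ and has no component in the ideal $\bigoplus_{i>k}A_{2i}$ of the even subalgebra $A_+$. The classical substitution method then absorbs each such vanishing layer into the remaining terms, producing a rank saving of one per layer. Counting such layers yields the advertised total saving of $\sum_{i=k+1}^{\lfloor N/2\rfloor}\binom{N}{2i}$.

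The main obstacle will be the clean interplay between Pratt's $\sigma\leftrightarrow-\sigma$ halving and the subsequent substitution: one must check that no rank-one term is being counted twice and that every vanishing layer genuinely permits a substitution without secondary contributions elsewhere. This boils down to the fact that $\bigoplus_{i>k}A_{2i}$ is an ideal of $A_+$, so that substitutions affecting the ``above-target'' layers do not alter the $A_{2k}$-component of $T_N$; together with the linear independence of the basis vectors $x_T$ in the ambient third factor, this ensures the substitution method can be performed layer by layer. Combining the two sources of saving then yields the claimed bound $\frac{1}{2}8^k-\sum_{i=k+1}^{\lfloor N/2\rfloor}\binom{N}{2i}$.
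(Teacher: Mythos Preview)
Your approach is the paper's: embed $T_N$ as a restriction of the rank-$2^N$ structure tensor of the group algebra $\bbK[(\bbZ/2)^N]$ (your $A_\varepsilon$ for $\varepsilon\ne 0$) and then reduce rank via the substitution method on chimney layers that vanish. The paper carries this out in a single pass: the rank of $T_N$ is at most the number of subsets of $[N]$ that arise as the symmetric difference of two $k$-subsets, namely the even-cardinality subsets of size at most $2k$, giving $\sum_{i=0}^{k}\binom{N}{2i}=2^{N-1}-\sum_{i>k}\binom{N}{2i}$. Your $\sigma\leftrightarrow-\sigma$ halving is exactly the substitution on the $2^{N-1}$ odd-cardinality layers done all at once, and your second step handles the remaining high-even layers.

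There is, however, a slip in your justification for the second round of substitutions. The statement ``$A_k\cdot A_k$ lands in $A_{2k}$'' and the claimed ideal property of $\bigoplus_{i>k}A_{2i}$ in $A_+$ are facts about the \emph{nilpotent} algebra $A=B^{\otimes N}$, whereas the rank-$2^{N-1}$ decomposition you are manipulating represents multiplication in the \emph{group algebra} $A_\varepsilon$, where neither statement holds: there $x_Sx_T=\varepsilon^{2|S\cap T|}x_{S\bigtriangleup T}$, so $A_k\cdot A_k=\bigoplus_{j\le k}A_{2j}$ and the high-degree piece is not an ideal. What you actually need, and what is true, is that the chimney of the $A_\varepsilon$-tensor already has zero slice at each $x_T$ with $|T|>2k$, simply because $|S\bigtriangleup S'|\le|S|+|S'|=2k$. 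Substitution on a genuinely zero slice then rewrites the decomposition without changing the tensor at all, which is why the $A_{2k}$-component is preserved; the ideal argument plays no role and would not by itself prevent substitution on a \emph{nonzero} slice from corrupting $T_N$.
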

\begin{proof}
We proceed similarly to \autoref{prop:sweetRank}. We start with $T_{\bbZ_2}\in
(\bbK^2)^{\otimes 3}$ and note that $T_N$ is a subtensor of the minimal rank tensor $T_{\bbZ_2}^{\boxtimes N}$.
By the substitution method the rank of $T_N$ is bounded by the number of sets obtained as the symmetric difference of two subsets of cardinality $k$ of a set of cardinality $N$.
We note that such subsets are always even and of cardinality at most $2k$. The bound follows.
\end{proof}

\section*{\normalfont \scshape\fontfamily{ptm}\selectfont Acknowledgements}
\noindent
The authors would like to thank Alessandra Bernardi, Maciej Ga{\l}{\k{a}}zka,
Fulvio Gesmundo, Hang Huang, Joseph
M.~Landsberg, Giorgio Ottaviani, Kaski Petteri, Daniele Taufer,
and Virginia Vassilevska Williams for very helpful comments. The authors would like to thank the anonymous referee for careful reading.
This work is partially supported by  the Thematic Research Programme
\textit{Tensors: geometry, complexity and quantum entanglement}, University of
Warsaw, Excellence Initiative – Research University and the Simons Foundation
Award No.~663281 granted to the Institute of Mathematics of the Polish Academy
of Sciences for the years 2021-2023.
A special role in the realization of this paper was taken by the semester program AGATES: \textit{Algebraic Geometry with Applications to Tensors and Secants}, held in Warsaw from September 12 to December 16, 2022. This paper was written while the first author was a research fellow at \textit{Università degli Studi di Firenze}. The first author has been supported by the scientific project \textit{Multilinear Algebraic Geometry} of the program \textit{Progetti di ricerca di Rilevante Interesse Nazionale} (PRIN), Grant Assignment Decree No.~973, adopted on 06/30/2023 by the Italian Ministry of University and Research (MUR) and by the project \textit{Thematic Research Programmes}, Action I.1.5 of the program \textit{Excellence Initiative -- Research University} (IDUB) of the Polish Ministry of Science and Higher Education.
The second author is supported by National Science Centre grant
2020/39/D/ST1/00132. The third author is supported by the Deutsche Forschungsgemeinschaft grant
467575307.

\begin{bibdiv}
\begin{biblist}

\bib{alman2024more}{article}{
      author={Alman, J.},
      author={Duan, R.},
      author={Vassilevska~Williams, V.},
      author={Xu, Yinzhan},
      author={Xu, Z.},
      author={Zhou, R.},
       title={More asymmetry yields yaster matrix multiplication},
        date={2024},
     journal={preprint, arXiv:2404.16349 [math.AG]},
}

\bib{alman2018further}{incollection}{
      author={Alman, J.},
      author={Vassilevska~Williams, V.},
       title={Further limitations of the known approaches for matrix
  multiplication},
        date={2018, Paper no. 25, 15 pp.},
   booktitle={in: \textit{9th {I}nnovations in {T}heoretical {C}omputer
  {S}cience} ({C}ambridge, 2018), {LIPI}cs. {L}eibniz {I}nt. {P}roc. {I}nform.,
  vol. 58, {S}chloss {D}agstuhl. {L}eibniz-{Z}ent. {I}nform., {W}adern},
}

\bib{alman2021refined}{inproceedings}{
      author={Alman, J.},
      author={Vassilevska~Williams, V.},
       title={A refined laser method and faster matrix multiplication},
        date={2021},
   booktitle={in: \textit{Proceedings of the 2021 {ACM}-{SIAM} {S}ymposium on
  {D}iscrete {A}lgorithms ({SODA})} (held virtually), edited by {D}.~{M}arx,
  {S}ociety for {I}ndustrial and {A}pplied {M}athematics ({SIAM}),
  {P}hiladelphia, {PA}},
       pages={522\ndash 539},
}

\bib{Bal18}{article}{
      author={Ballico, E.},
       title={Beyond the cactus rank of tensors},
        date={2018},
     journal={Bull. Korean Math. Soc.},
      volume={55},
      number={5},
       pages={1587\ndash 1598},
}

\bib{BB14}{article}{
      author={Buczy\'{n}ska, W.},
      author={Buczy\'{n}ski, J.},
       title={Secant varieties to high degree {V}eronese reembeddings,
  catalecticant matrices and smoothable {G}orenstein schemes},
        date={2014},
     journal={J. Algebraic Geom.},
      volume={23},
      number={1},
       pages={63\ndash 90},
}

\bib{BB15}{article}{
      author={Buczy{\'{n}}ska, W.},
      author={Buczy\'{n}ski, J.},
       title={On differences between the border rank and the smoothable rank of
  a polynomial},
        date={2015},
     journal={Glasg. Math. J.},
      volume={57},
      number={2},
       pages={401\ndash 413},
}

\bib{BBG19}{article}{
      author={Ballico, E.},
      author={Bernardi, A.},
      author={Gesmundo, F.},
       title={A note on the cactus rank for {S}egre-{V}eronese varieties},
        date={2019},
     journal={J. Algebra},
      volume={526},
       pages={6\ndash 11},
}

\bib{BBM14}{article}{
      author={Bernardi, A.},
      author={Brachat, J.},
      author={Mourrain, B.},
       title={A comparison of different notions of ranks of symmetric tensors},
        date={2014},
     journal={Linear Algebra Appl.},
      volume={460},
       pages={205\ndash 230},
}

\bib{BCC+18}{article}{
      author={Bernardi, A.},
      author={Carlini, E.},
      author={Catalisano, M.~V.},
      author={Gimigliano, A.},
      author={Oneto, A.},
       title={The hitchhiker guide to: secant varieties and tensor
  decomposition},
        date={2018},
     journal={Mathematics},
      volume={6},
      number={12},
       pages={Paper no.~314, 86 pp.},
}

\bib{bjorklund2024chromatic}{article}{
      author={Bj{\"o}rklund, A.},
      author={Curticapean, R.},
      author={Husfeldt, T.},
      author={Kaski, P.},
      author={Pratt, K.},
       title={Chromatic number in $1.9999^{n}$ time? fast deterministic set
  partitioning under the asymptotic rank conjecture},
        date={2024},
     journal={preprint, arXiv:2404.04987 [math.AG]},
}

\bib{BCR22}{article}{
      author={Bertone, C.},
      author={Cioffi, F.},
      author={Roggero, M.},
       title={Smoothable {G}orenstein points via marked schemes and
  double-generic initial ideals},
        date={2022},
     journal={Exp. Math.},
      volume={31},
      number={1},
       pages={120\ndash 137},
}

\bib{BCRL79}{article}{
      author={Bini, D.},
      author={Capovani, M.},
      author={Romani, F.},
      author={Lotti, G.},
       title={{$O(n^{2.7799})$} complexity for {$n\times n$} approximate matrix
  multiplication},
        date={1979},
     journal={Inform. Process. Lett.},
      volume={8},
      number={5},
       pages={234\ndash 235},
}

\bib{Algebraic_Complexity_Theory}{book}{
      author={B{\"u}rgisser, P.},
      author={Clausen, M.},
      author={Shokrollahi, M.~A.},
       title={Algebraic complexity theory},
   publisher={Springer Science \& Business Media},
        date={2013},
      volume={315 of Grundlehren der mathematischen Wissenschaften},
}

\bib{BCS97}{book}{
      author={B\"{u}rgisser, P.},
      author={Clausen, M.},
      author={Shokrollahi, M.~A.},
       title={Algebraic complexity theory},
      series={Grundlehren der mathematischen Wissenschaften [Fundamental
  Principles of Mathematical Sciences]},
   publisher={Springer-Verlag, Berlin},
        date={1997},
      volume={315},
        note={With the collaboration of T. Lickteig},
}

\bib{Ber12}{incollection}{
      author={Bertin, J.},
       title={The punctual {H}ilbert scheme: an introduction},
        date={2012},
   booktitle={in: \textit{Geometric methods in representation theory. {I}},
  {S}\'{e}min. {C}ongr., vol. 24},
      volume={24},
   publisher={Soc. Math. France, Paris},
       pages={1\ndash 102},
}

\bib{BGI11}{article}{
      author={Bernardi, A.},
      author={Gimigliano, A.},
      author={Idà, M.},
       title={Computing symmetric rank for symmetric tensors},
        date={2011},
     journal={J. Symbolic Comput.},
      volume={46},
      number={1},
       pages={34\ndash 53},
}

\bib{BGL13}{article}{
      author={Buczy\'{n}ski, J.},
      author={Ginensky, A.},
      author={Landsberg, J.~M.},
       title={Determinantal equations for secant varieties and the
  {E}isenbud-{K}oh-{S}tillman conjecture},
        date={2013},
     journal={J. Lond. Math. Soc. (2)},
      volume={88},
      number={1},
       pages={1\ndash 24},
}

\bib{BJ17}{article}{
      author={Buczy\'{n}ski, J.},
      author={Jelisiejew, J.},
       title={Finite schemes and secant varieties over arbitrary
  characteristic},
        date={2017},
     journal={Differential Geom. Appl.},
      volume={55},
       pages={13\ndash 67},
}

\bib{BJMR18}{article}{
      author={Bernardi, A.},
      author={Jelisiejew, J.},
      author={{Macias Marques}, P.},
      author={Ranestad, K.},
       title={On polynomials with given {H}ilbert function and applications},
        date={2018},
     journal={Collect. Math.},
      volume={69},
      number={1},
       pages={39\ndash 64},
}

\bib{bjorklund2023asymptotic}{article}{
      author={Bj{\"o}rklund, A.},
      author={Kaski, P.},
       title={The asymptotic rank conjecture and the set cover conjecture are
  not both true},
        date={2023},
      status={preprint, arXiv:2310.11926 [math.AG]},
}

\bib{BL16}{incollection}{
      author={Bl{\"a}ser, M.},
      author={Lysikov, V.},
       title={On degeneration of tensors and algebras},
        date={2016, Paper no. 19, 11 pp.},
   booktitle={in: \textit{41st {I}nternational {S}ymposium on {M}athematical
  {F}oundations of {C}omputer {S}cience} ({C}racow, 2016), edited by
  {P}.~{F}aliszewski, {LIPI}cs. {L}eibniz {I}nt. {P}roc. {I}nform., vol. 58,
  {S}chloss {D}agstuhl. {L}eibniz-{Z}ent. {I}nform., {W}adern},
}

\bib{blaser2020slice}{incollection}{
      author={Bl\"{a}ser, M.},
      author={Lysikov, V.},
       title={Slice rank of block tensors and irreversibility of structure
  tensors of algebras},
        date={2020, Paper No. 17, 15 pp.},
   booktitle={in: \textit{45th {I}nternational {S}ymposium on {M}athematical
  {F}oundations of {C}omputer {S}cience} (prague, 2020), edited by
  {J}.~{E}sparza and {D}.~král', {LIPI}cs. {L}eibniz {I}nt. {P}roc. {I}nform.,
  vol. 170, {S}chloss {D}agstuhl. {L}eibniz-{Z}ent. {I}nform., {W}adern},
}

\bib{BLR80}{article}{
      author={Bini, D.},
      author={Lotti, G.},
      author={Romani, F.},
       title={Approximate solutions for the bilinear form computational
  problem},
        date={1980},
     journal={SIAM J. Comput.},
      volume={9},
      number={4},
       pages={692\ndash 697},
}

\bib{BOT23}{article}{
      author={Bernardi, A.},
      author={Oneto, A.},
      author={Taufer, D.},
       title={On schemes evinced by generalized additive decompositions and
  their regularity},
        date={2024},
     journal={J. Math. Pures Appl. (9)},
      volume={188},
       pages={446\ndash 469},
}

\bib{BR13}{article}{
      author={Bernardi, A.},
      author={Ranestad, K.},
       title={On the cactus rank of cubics forms},
        date={2013},
     journal={J. Symbolic Comput.},
      volume={50},
       pages={291\ndash 297},
}

\bib{BR24}{article}{
      author={Bernardi, A.},
      author={Ranestad, K.},
       title={Corrigendum to: On the cactus rank of cubics forms},
        date={2024},
     journal={J. Symbolic Comput.},
}

\bib{cygan2016problems}{article}{
      author={Cygan, M.},
      author={Dell, H.},
      author={Lokshtanov, D.},
      author={Marx, D.},
      author={Nederlof, J.},
      author={Okamoto, Y.},
      author={Paturi, R.},
      author={Saurabh, S.},
      author={Wahlstr\"{o}m, M.},
       title={On problems as hard as {CNF}-{SAT}},
        date={2016},
     journal={ACM Trans. Algorithms},
      volume={12},
      number={3},
       pages={Paper no. 41, 24 pp.},
}

\bib{CEVV09}{article}{
      author={Cartwright, D.~A.},
      author={Erman, D.},
      author={Velasco, M.},
      author={Viray, B.},
       title={Hilbert schemes of 8 points},
        date={2009},
     journal={Algebra Number Theory},
      volume={3},
      number={7},
       pages={763\ndash 795},
}

\bib{conner2021towards}{article}{
      author={Conner, Austin},
      author={Gesmundo, Fulvio},
      author={Landsberg, Joseph~M},
      author={Ventura, Emanuele},
      author={Wang, Yao},
       title={Towards a geometric approach to strassen’s asymptotic rank
  conjecture},
        date={2021},
     journal={Collectanea mathematica},
      volume={72},
       pages={63\ndash 86},
}

\bib{CGLV22}{article}{
      author={Conner, A.},
      author={Gesmundo, F.},
      author={Landsberg, J.~M.},
      author={Ventura, E.},
       title={Rank and border rank of {K}ronecker powers of tensors and
  {S}trassen's laser method},
        date={2022},
     journal={Comput. Complexity},
      volume={31},
      number={1},
       pages={Paper No. 1, 40},
}

\bib{CGO14}{incollection}{
      author={Carlini, E.},
      author={Grieve, N.},
      author={Oeding, L.},
       title={Four lectures on secant varieties},
        date={2014},
   booktitle={in: \textit{Connections between algebra, combinatorics, and
  geometry} ({R}egina, {SK}, 2012), {Springer Proc. Math. Stat.}, {vol.~76},
  {Springer, New York}},
       pages={101\ndash 146},
}

\bib{Che11}{article}{
      author={Chevalier, P.},
       title={Optimal separation of independent narrow-band sources -- concept
  and performance},
        date={2011},
     journal={Signal Process.},
      volume={73{\normalfont{, special issue on blind separation and
  deconvolution}}},
       pages={27\ndash 48},
}

\bib{Chi06}{article}{
      author={Chipalkatti, J.},
       title={Apolar schemes of algebraic forms},
        date={2006},
     journal={Canad. J. Math.},
      volume={58},
      number={3},
       pages={476\ndash 491},
}

\bib{CJN13}{article}{
      author={Casnati, G.},
      author={Jelisiejew, J.},
      author={Notari, R.},
       title={Irreducibility of the {G}orenstein loci of {H}ilbert schemes via
  ray families},
        date={2015},
     journal={Algebra Number Theory},
      volume={9},
      number={7},
       pages={1525\ndash 1570},
}

\bib{CVZ21}{article}{
      author={Christandl, M.},
      author={Vrana, P.},
      author={Zuiddam, J.},
       title={Barriers for fast matrix multiplication from irreversibility},
        date={2021},
     journal={Theory Comput.},
      volume={17},
       pages={Paper no. 2, 32 pp.},
}

\bib{CW}{article}{
      author={Coppersmith, D.},
      author={Winograd, S.},
       title={Matrix multiplication via arithmetic progressions},
        date={1990},
     journal={J. Symbolic Comput.},
      volume={9},
      number={3},
       pages={251\ndash 280},
}

\bib{DC07}{article}{
      author={De~Lauthauwer, L.},
      author={Castaing, J.},
       title={Tensor-based techniques for the blind separation of ds-cdma
  signals},
        date={2007},
     journal={Signal Process.},
      volume={87},
       pages={322\ndash 336},
}

\bib{Dol00}{article}{
      author={Dolgachev, I.~V.},
       title={Polar {C}remona transformations},
        date={2000},
     journal={Michigan Math. J.},
      volume={48},
       pages={191\ndash 202},
        note={Dedicated to William Fulton on the occasion of his 60th
  birthday},
}

\bib{Dol12}{book}{
      author={Dolgachev, I.~V.},
       title={Classical algebraic geometry: A modern view},
   publisher={Cambridge University Press},
     address={Cambridge},
        date={2012},
}

\bib{DP03}{article}{
      author={Dimca, A.},
      author={Papadima, S.},
       title={Hypersurface complements, {M}ilnor fibers and higher homotopy
  groups of arrangments},
        date={2003},
     journal={Ann. of Math. (2)},
      volume={158},
      number={2},
       pages={473\ndash 507},
}

\bib{EliasRossiShortGorenstein}{article}{
      author={Elias, J.},
      author={Rossi, M.~E.},
       title={Isomorphism classes of short {G}orenstein local rings via
  {M}acaulay's inverse system},
        date={2012},
     journal={Trans. Amer. Math. Soc.},
      volume={364},
      number={9},
       pages={4589\ndash 4604},
}

\bib{Fla23}{article}{
      author={Flavi, C.},
       title={Border rank of powers of ternary quadratic forms},
        date={2023},
     journal={J. Algebra},
      volume={634},
       pages={599\ndash 625},
}

\bib{fogarty}{article}{
      author={Fogarty, J.},
       title={Algebraic families on an algebraic surface},
        date={1968},
        ISSN={0002-9327},
     journal={Amer. J. Math},
      volume={90},
       pages={511\ndash 521},
}

\bib{GL19}{article}{
      author={Gesmundo, F.},
      author={Landsberg, J.~M.},
       title={Explicit polynomial sequences with maximal spaces of partial
  derivatives and a question of {K}.~{M}ulmuley},
        date={2019},
     journal={Theory Comput.},
      volume={15},
       pages={Paper no. 3, 24},
}

\bib{GMR23}{article}{
      author={Ga{\l}{\k{a}}zka, M.},
      author={Ma{\'{n}}dziuk, T.},
      author={Rupniewski, F.},
       title={Distinguishing secant from cactus varieties},
        date={2023},
     journal={Found. Comput. Math.},
      volume={23},
      number={4},
       pages={1167\ndash 1214},
}

\bib{Gro61}{article}{
      author={Grothendieck, A.},
       title={Techniques de construction et théorèmes d'existence en
  géométrie algébrique {IV}: les schémas de {H}ilbert},
    language={French},
        date={1961},
     journal={Séminaire Bourbaki},
      volume={221},
       pages={249\ndash 276},
        note={Reprinted in: \textit{Séminaire Bourbaki, Vol.~6}, Société
  Mathématique de France, Paris, 1995, reprint of the original edition
  published by W.~A.~Benjamin, New York-Amsterdam, 1966},
}

\bib{GRV18}{article}{
      author={Gallet, M.},
      author={Ranestad, K.},
      author={Villamizar, N.},
       title={Varieties of apolar subschemes of toric surfaces},
        date={2018},
     journal={Ark. Mat.},
      volume={56},
      number={1},
       pages={73\ndash 99},
}

\bib{hoyois2021hermitian}{article}{
      author={Hoyois, M.},
      author={Jelisiejew, J.},
      author={Nardin, D.},
      author={Yakerson, M.},
       title={Hermitian {K}-theory via oriented {G}orenstein algebras},
        date={2022},
     journal={J. Reine Angew. Math.},
      volume={793},
       pages={105\ndash 142},
}

\bib{HKS92}{article}{
      author={Hulek, K.},
      author={Katz, S.},
      author={Schreyer, F.-O.},
       title={Cremona transformations and syzygies},
        date={1992},
     journal={Math. Z.},
      volume={209},
      number={3},
       pages={419\ndash 443},
}

\bib{huang2020vanishing}{article}{
      author={Huang, H.},
      author={Micha{\l}ek, M.},
      author={Ventura, E.},
       title={Vanishing {H}essian, wild forms and their border {VSP}},
        date={2020},
     journal={Math. Ann.},
      volume={378},
      number={3-4},
       pages={1505\ndash 1532},
}

\bib{huh2012milnor}{article}{
      author={Huh, J.},
       title={Milnor numbers of projective hypersurfaces and the chromatic
  polynomial of graphs},
        date={2012},
     journal={J. Amer. Math. Soc.},
      volume={25},
      number={3},
       pages={907\ndash 927},
}

\bib{Iar94}{article}{
      author={Iarrobino, A.},
       title={Associated graded algebra of a {G}orenstein {A}rtin algebra},
        date={1994},
     journal={Mem. Amer. Math. Soc.},
      volume={107},
      number={514},
       pages={viii+115 pp.},
}

\bib{IK99}{book}{
      author={Iarrobino, A.},
      author={Kanev, V.},
       title={Power sums, {G}orenstein algebras, and determinantal loci},
      series={Lecture Notes in Mathematics},
   publisher={Springer-Verlag},
     address={Berlin},
        date={1999},
      volume={1721},
        note={Appendix C by A. Iarrobino and S. L. Kleiman},
}

\bib{Jel14}{article}{
      author={Jelisiejew, J.},
       title={Local finite-dimensional {G}orenstein {$k$}-algebras having
  {H}ilbert function {$(1,5,5,1)$} are smoothable},
        date={2014},
     journal={J. Algebra Appl.},
      volume={13},
      number={8},
       pages={Paper no. 1450056, 7 pp.},
}

\bib{Jel17}{article}{
      author={Jelisiejew, J.},
       title={Classifying local {A}rtinian {G}orenstein algebras},
        date={2017},
     journal={Collect. Math.},
      volume={68},
      number={1},
       pages={101\ndash 127},
}

\bib{Jel19}{article}{
      author={Jelisiejew, J.},
       title={Elementary components of {H}ilbert schemes of points},
        date={2019},
     journal={J. Lond. Math. Soc. (2)},
      volume={100},
      number={1},
       pages={249\ndash 272},
}

\bib{JKK19}{article}{
      author={Jelisiejew, J.},
      author={Kapustka, G.},
      author={Kapustka, M.},
       title={Smoothable zero dimensional schemes and special projections of
  algebraic varieties},
        date={2019},
     journal={Math. Nachr.},
      volume={292},
      number={9},
       pages={2018\ndash 2027},
}

\bib{JLP}{article}{
      author={Jelisiejew, J.},
      author={Landsberg, J.~M.},
      author={Pal, A.},
       title={Concise tensors of minimal border rank},
        date={2024},
     journal={Math. Ann.},
      volume={388},
      number={3},
       pages={2473\ndash 2517},
}

\bib{jelisiejew2022components}{article}{
      author={Jelisiejew, J.},
      author={\v{S}ivic, K.},
       title={Components and singularities of {Q}uot schemes and varieties of
  commuting matrices},
        date={2022},
     journal={J. Reine Angew. Math.},
      volume={788},
       pages={129\ndash 187},
}

\bib{kaski2024universal}{article}{
      author={Kaski, Petteri},
      author={Micha{\l}ek, Mateusz},
       title={A universal sequence of tensors for the asymptotic rank
  conjecture},
organization={Schloss Dagstuhl--Leibniz-Zentrum f{\"u}r Informatik},
        date={2025},
     journal={16th Innovations in Theoretical Computer Science Conference
  (ITCS), arXiv:2404.06427},
}

\bib{kleppe_roig_codimensionthreeGorenstein}{article}{
      author={Kleppe, J.~O.},
      author={Mir\'o-Roig, R.~M.},
       title={The dimension of the {H}ilbert scheme of {G}orenstein codimension
  {$3$} subschemes},
        date={1998},
     journal={J. Pure Appl. Algebra},
      volume={127},
      number={1},
       pages={73\ndash 82},
}

\bib{Landsberg__tensors}{book}{
      author={Landsberg, J.~M.},
       title={Tensors: geometry and applications},
      series={Graduate Studies in Mathematics},
   publisher={American Mathematical Society, Providence, RI},
        date={2012},
      volume={128},
}

\bib{landsberg2017abelian}{article}{
      author={Landsberg, J.~M.},
      author={Micha{\l}ek, M.},
       title={Abelian tensors},
        date={2017},
     journal={J. Math. Pures Appl. (9)},
      volume={108},
      number={3},
       pages={333\ndash 371},
}

\bib{landsberg2017geometry}{article}{
      author={Landsberg, J.~M.},
      author={Micha{\l}ek, M.},
       title={On the geometry of border rank decompositions for matrix
  multiplication and other tensors with symmetry},
        date={2017},
     journal={SIAM J. Appl. Algebra Geom.},
      volume={1},
      number={1},
       pages={2\ndash 19},
}

\bib{LMR23}{article}{
      author={Laface, A.},
      author={Massarenti, A.},
      author={Rischter, R.},
       title={Decomposition algorithms for tensors and polynomials},
        date={2023},
     journal={SIAM J. Appl. Algebra Geom.},
      volume={7},
      number={1},
       pages={264\ndash 290},
}

\bib{LO13}{article}{
      author={Landsberg, J.~M.},
      author={Ottaviani, G.},
       title={Equations for secant varieties of {V}eronese and other
  varieties},
        date={2013},
     journal={Ann. Mat. Pura Appl. (4)},
      volume={192},
      number={4},
       pages={569\ndash 606},
}

\bib{Mac94}{book}{
      author={Macaulay, F.~S.},
       title={The algebraic theory of modular systems},
      series={Cambridge Mathematical Library},
   publisher={Cambridge University Press, Cambridge},
        date={1994},
        note={Revised reprint of the 1916 original, with an introduction by
  P.~Roberts},
}

\bib{McC87}{book}{
      author={McCullagh, P.},
       title={Tensor methods in statistics},
      series={Monographs on Statistics and Applied Probability},
   publisher={Chapman \& Hall},
     address={London},
        date={1987},
}

\bib{MS05}{book}{
      author={Miller, E.},
      author={Sturmfels, B.},
       title={Combinatorial commutative algebra},
      series={Graduate Texts in Mathematics},
   publisher={Springer-Verlag, New York},
        date={2005},
      volume={227},
}

\bib{michalek2021invitation}{book}{
      author={Micha{\l}ek, M.},
      author={Sturmfels, B.},
       title={Invitation to nonlinear algebra},
      series={Grad. Stud. Math.},
   publisher={American Mathematical Society (AMS), Providence, RI},
        date={2021},
      volume={211},
        ISBN={978-1-4704-6551-3; 978-1-4704-6308-3},
}

\bib{pan1966means}{article}{
      author={Pan, V.~Ja.},
       title={On means of calculating values of polynomials},
        date={1966},
     journal={Uspehi Mat. Nauk},
      volume={21},
      number={1(127)},
       pages={103\ndash 134},
}

\bib{pratt2023stronger}{article}{
      author={Pratt, K.},
       title={A stronger connection between the asymptotic rank conjecture and
  the set cover conjecture},
        date={2023},
      status={preprint, arXiv:2311.02774 [math.AG]},
}

\bib{Rez92}{article}{
      author={Reznick, B.},
       title={Sums of even powers of real linear forms},
        date={1992},
     journal={Mem. Amer. Math. Soc.},
      volume={96},
      number={463},
       pages={viii+155 pp.},
}

\bib{RS11}{article}{
      author={Ranestad, K.},
      author={Schreyer, F.-O.},
       title={On the rank of a symmetric form},
        date={2011},
     journal={J. Algebra},
      volume={346},
       pages={340\ndash 342},
}

\bib{stacks-project}{misc}{
      author={{Stacks project authors}, The},
       title={The stacks project},
        date={2023},
        note={URL: \url{https://stacks.math.columbia.edu}},
}

\bib{Str96}{incollection}{
      author={Str{\o}mme, S.~A.},
       title={Elementary introduction to representable functors and {H}ilbert
  schemes},
        date={1996},
   booktitle={in: \textit{Parameter spaces} ({W}arsaw, 1994), {Banach Center
  Publ., vol. 36, Polish Acad. Sci. Inst. Math., Warsaw}},
      series={Banach Center Publ.},
      volume={36},
   publisher={Polish Acad. Sci. Inst. Math., Warsaw},
       pages={179\ndash 198},
}

\bib{Syl51}{book}{
      author={Sylvester, J.~J.},
       title={An essay on canonical forms, supplement to a sketch of a memoir
  on elimination, transformation and canonical forms},
   publisher={George Bell \& Sons},
     address={London, Fleet Street},
        date={1851},
        note={Reprinted in: {\textit{The collected mathematical papers of James
  Joseph Sylvester}}, Vol.~1, Paper no. 34, Chelsea Publishing Co., New York,
  1973, pp.~203--216, edited by H.~F.~Baker, reprint of the original edition
  published by Cambridge University Press, London, Fetter Lane, E.~C., 1904.},
}

\bib{williams2023new}{article}{
      author={Vassilevska~Williams, V.},
      author={Xu, Y.},
      author={Xu, Z.},
      author={Zhou, R.},
       title={New bounds for matrix multiplication: from alpha to omega},
        date={2023},
     journal={preprint, arXiv:2307.07970 [math.AG]},
}

\bib{wojtala}{article}{
      author={Wojtala, M.},
       title={Irreversibility of structure tensors of modules},
        date={2023},
     journal={Collect. Math.},
      volume={74},
      number={2},
       pages={487\ndash 499},
}

\end{biblist}
\end{bibdiv}

\end{document}